\documentclass[11pt,reqno]{amsart}
\usepackage{amssymb}
\usepackage{amsmath}
\usepackage{geometry}                
\geometry{a4paper}                   
\usepackage{graphicx}
\usepackage{epstopdf}
\usepackage{mathrsfs}
\usepackage{xcolor}
\usepackage{mathtools}
\usepackage{comment}
%
\mathtoolsset{showonlyrefs}
\newcommand{\Addresses}{{
  \bigskip
  \footnotesize
 
 \textsc{Department of Mathematics, KTH Royal Institute of Technology,
  Stockholm, Sweden}\par\nopagebreak
  \textit{E-mail addresses:} \quad \texttt{persj@kth.se, jostromb@kth.se}
}}

\title{Schr\"odinger means in higher dimensions}
\author{Per Sj\"olin and Jan-Olov Str\"omberg}

\renewcommand{\S}[0]{{\bf S}}

\newcommand{\R}[0]{{\mathbb R}}
\newcommand{\C}[0]{{\mathbb C}}

\newcommand{\supp}[0]{\mbox{supp }}
\newcommand{\e}[0]{\mbox{e}}

\newtheorem{theorem}{Theorem}

\newtheorem{lemma}{Lemma}
\newtheorem{proposition}{Proposition}
\newtheorem{corollary}{Corollary}
\theoremstyle{definition}

\renewcommand{\smallskip}{}

\newcommand{\aname}{\em}
\newcommand{\jname}{}

\begin{document}
\begin{abstract}
Maximal estimates for Schr\"odinger means and convergence almost everywhere of sequences of Schr\"odinger means are
studied.
\end{abstract}
\let\thefootnote\relax\footnote{\emph{Mathematics Subject Classification} (2010):42B99.\par
\emph{Key Words and phrases:} Schr\"odinger equation, convergence, Sobolev spaces   } 
\maketitle
\section{Introduction}
For  $f\in L^2(\R^n), n\ge1$ and $a>0$ we set 
\begin{align}
\label{eq:Fouriertransform}
\hat f(\xi)=\int_{\R^n} \e^{-i\xi\cdot x} f(x)\,dx, \xi \in \R^n,
\end{align}
and \[ 
S_tf(x)=(2\pi)^{-n} \int_{\R^n} \e^{i\xi\cdot x} \e^{it|\xi|^a}\hat f(\xi)\, d\xi,\quad x\in\R^n, t\ge0.
\]
We introduce Sobolev spaces $H_s=H_s(\R^n)$ by setting 
\[
H_s=\{f\in \mathscr{S}^\prime; \|f\|_{H_s} <\infty \}, s\in\R,
\]
where
\[
 \|f\|_{H_s}=\left(\int_{\R^n} (1+[\xi[^2)^s |\hat f(\xi)|^2\,d\xi \right)^{1/2}
\]
and $ \mathscr{S}= \mathscr{S}(\R^n)$ denotes the Schwartz class.
We let the sequence $\{t_m\}_1^\infty$ have the properties that
\begin{align}
\label{eq:1}
1>t_1>t_2>t_3>\dots>0 \mbox{ and }\lim_{m\to\infty}t_m=0.
\end{align}  
We shall study the problem of deciding for which sequences  $\{t_m\}_1^\infty$ and functions $f$ one has
\begin{align}
\label{eq:seqlimit}
\lim_{m\to\infty} S_{t_m} f(x)=f(x)\mbox{ almost everywhere}.
\end{align}
For $r>0$ we say that $\{t_m\}_1^\infty\in l^r$ if $\sum\limits_1\ t_m^r<\infty$, and that  $\{t_m\}_1^\infty \in l^{r,\infty}$ if
 $\#\{m;t_m>b\}$\\$\lesssim b^{-r}$ for $b>0$.\\
\par
In Sj\"olin and Str\"omberg \cite{Sjo-Str1} we proved that if $a>0, n\ge1, 0<s<a/2$, and 
 $\{t_m\}_1^\infty\in l^r$ for some $r<2s/(a-2s)$, then \eqref{eq:seqlimit} holds if $f\in H_s(\R^n)$.\\
 In the case $n=1$ Dimou and Seeger \cite{Dim-See} have proved that if $a>0, a\ne1, 0<s<a/4$ and  $\{t_m\}_1^\infty\in l^r$,  where
  $r=2s/(a-4s)$, then \eqref{eq:seqlimit} holds if $f\in H_s(\R^)$. They also proved that here $r$ cannot be replaced by $r_1$
  if $r_1>r$.\\
  In Section 3 of this paper we shall improve the result from \cite{Sjo-Str1} mentioned above in the case $n\ge2$. We shall prove that
   \eqref{eq:seqlimit}  holds for $f\in H_s(\R^n)$ if $a>0,a\ne1, n\ge2, 0<s<a/2$, and   $\{t_m\}_1^\infty \in l^{r,\infty}$
  where  $r=2s/(a-2s)$. In the proof of this result we shall use the following theorem.
  \begin{theorem}
  Assume $a>0,n\ge1,\lambda\ge1 $, and let the interval $J\subset[0,1]$. Assume also that $f\in L^2(\R^n)$ and 
  $\supp\hat f\subset B(0;\lambda)=\{\xi\in\R^n;|\xi|\le\lambda\}$. Then one has\[
  \|\sup\limits_{t\in J}|S_tf| \|_2\le(1+C|J|^{1/2}\lambda^{a/2})\|f\|_2.
  \]
  \end{theorem}
  Let the ball $B$ be a subset of $R^n$ and let the interval  $J\subset[0,1]$. Set $E=B\times J$. In Section 2 we shall also study maximal functions of the type\[
  S_E^*f(x)=\sup\limits_{(y,t)\in E}|S_tf(x+y)|,\, x\in\R^n, f\in\mathscr{S}(\R^n).
  \]
  In Section 4 we study relations between maximal estimates in one variabel and maximal estimates in dimension $n\ge2$.\\
  In Section 5 finally we giv a counter-example which shows that in the case $n\ge2, 0<s<a/4, r=2s/(a-4s), \{t_m\}_1^\infty \in l^{r,\infty},
  (t_n-t_{n+1})_1^\infty$ decreasing, there is no estimate \[
  \|\sup\limits_m|S_{t_m}f|\|_2\lesssim\|f\|_{H_s}
  \]
  for all radial functions $ f\in\mathscr{S}(\R^n)$.
  \par
  We write $A\lesssim B$ if there is a positive constant $C$ such that $A\le CB$, , and we write $A\sim B$ if  $A\lesssim B$ and $B\lesssim A$.
  \setcounter{theorem}{0}
  \section{Maximal estimates}
  The following theorem was proved in Sj\"olin and Str\"omberg \cite{Sjo-Str1}. We shall here give a new proof which is simpler than the proof in
  \cite{Sjo-Str1}.\\[.5cm]
\par
 {\bf Theorem A. }{\em (Sj\"olin-Str\"omberg) Asssume $a>0, n\ge1, \lambda\ge1$, and $J$ interval in $\R$. Assume  also that $f\in L^2(\R^n)$
 and $\supp\hat f\subset B(0;\lambda)$.\\
 Then one has\[
 \|\sup\limits_{t\in J}|S_tf| \|_2\le (1+C|J|\lambda^a)\|f\|_2.
 \]
 }
 \begin{proof}
 We can write\[
 S_tf(x)=c\int \e^{i\xi\cdot x} \e^{it|\xi|^a} \hat f(\xi)\, d\xi
 \]
 and $J=[t_0,t_0+r]$ where $r=|J|$.\\
 We have \[
  \e^{it|\xi|^a}=\Delta+\e^{it_0|\xi|^a}
 \]
 where $\Delta=\e^{it|\xi|^a}-\e^{it_0|\xi|^a}$. It follows that
 \[
 \Delta=i|\xi|^a\int_{t_0}^t\e^{i|\xi|^as}\,ds,
 \]
 and
 \begin{align}
 S_tf(x)&=c\int_{\R^n} \int_{t_0}^t \e^{i\xi\cdot x} i|\xi|^a\e^{is|\xi|^a} \hat f(\xi)\, d\xi\,ds+c\int_{\R^n} \e^{i\xi\cdot x} \e^{it_0|\xi|^a} \hat f(\xi)\, d\xi\\
 &=S_1(x,t)+S_2(x,t).
 \end{align}
 Hence \[
 |S_1(x,t)|\lesssim     \int_{t_0}^t \left|\int_{\R^n}\e^{i\xi\cdot x} i|\xi|^a\e^{is|\xi|^a} \hat f(\xi)\, d\xi\right|\,ds
 \]
 and \[
 \sup\limits_{t\in J}|S_1(x,t)|\lesssim     \int_{t_0}^{t_0+r} \left|\int_{\R^n}\e^{i\xi\cdot x} i|\xi|^a\e^{is|\xi|^a} \hat f(\xi)\, d\xi\right|\,ds.
 \]
 Using Minkovski's inequality and Plancherel's theorem we obtain
 \begin{align}
 \left( \int\sup\limits_{t\in J}|S_1(x,t)|^2\,dx\right)^{1/2}&\lesssim  \int_{t_0}^{t_0+r} \||\xi|^a\hat f(\xi)\|_2\,ds\\
&\le r\lambda^a\left(\int_{B(0;\lambda)}|\hat f(\xi)|^2|\right)^{1/2}\lesssim r\lambda^a \|f\|_2.
 \end{align}
 Also\[
  \left( \int\sup\limits_{t\in J}|S_2(x,t)|^2\,dx\right)^{1/2}=\left( \int|S_2(x,t_0)|^2\,dx\right)^{1/2}=\|f\|_2,
 \]
 and we obtain \[
 \|\sup\limits_{t\in J}|S_tf|\|_2\le (1+Cr\lambda^a)\|f\|_2,
 \]
 which proves the theorem.
 \end{proof}
\par
In the following theorems let $J$ denote an interval. In the one-dimensional case Dimou and Seeger obtained the following result.\\[.5cm]
{\bf Theorem B. }{\em (Dimou -Seeger  )
Assume $a>0, a\ne1,n=1,\lambda\ge1$. and $J\subset[0,1]$. Assume also that $f\in L^2(\R)$ and 
$\supp \hat f\subset\{\xi;\lambda/2\le|\xi|\le\lambda\}$.\\
Then one has\[
\|\sup\limits_{t\in J}|S_tf|\|_2\lesssim \left(1+|J|^{1/4}\lambda^{a/4}\right)\|f\|_2.
\]\\[.5cm]
} 
We have the following result.
\par
 \begin{theorem}
  Assume $a>0,n\ge1,\lambda\ge1 $, and let the interval $J\subset[0,1]$. Assume also that $f\in L^2(\R^n)$ and 
  $\supp\hat f\subset B(0;\lambda)=\{\xi\in\R^n;|\xi|\le\lambda\}$. Then one has\[
  \|\sup\limits_{t\in J}|S_tf| \|_2\le(1+C|J|^{1/2}\lambda^{a/2})\|f\|_2.
  \]
  \end{theorem}
\begin{proof}
First we study the case $|J|\lambda^a\le1$. Then $(1+C|J|\lambda^a))\le(1+C|J||^{1/2}\lambda^{a/2})$. Applying Theorem A we get\[
\|\sup\limits_{t\in J}|S_tf|\|_2\le(1+C|J|\lambda^a))\|f\|_2\le(1+C|J||^{1/2}\lambda^{a/2})\|f\|_2.
\]
It remains to study the case $|J|\lambda^a>1$.\\
Cover $J$ with intervals $J_i, i=1,2,\dots,N$, of length $\lambda^{-a}$. We may take $N\le|J|\lambda^a+1$.. Using Theorem A we obtain
\begin{align}
 \|\sup\limits_{t\in J}|S_tf|\|_2^2\le\sum\limits_{i=1}^N\|\sup\limits_{t\in J_i}|S_t|f\|_2^2\lesssim\sum\limits_{i=1}^N\left(1+|J_i|\lambda^a \right)\|f\|_2^2
 \lesssim N\|f\|^2_2\le\left(|J|\lambda^a+1\right)\|f\|^2_2
 \end{align} 
 and the inequality in the theorem follows.\\
 \end{proof}
 We have the following extension of Theorem B.
 \begin{theorem} 
Assume $a>0, a\ne1, \lambda\ge1$ and $J\subset[0,1]$. Assume also that $f\in L^2(\R)$ with $\supp \hat f\subset B(0,\lambda)$. Then one has\[
\|\sup\limits_{t\in J}|S_tf|\|_2\lesssim C\left( |J|^{1/4}\lambda^{a/4}+1 \right)\|f\|_2.
\]
 \end{theorem}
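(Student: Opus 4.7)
The plan is to reduce to Theorem B via a Littlewood--Paley decomposition, split at a threshold chosen so that the low-frequency piece is absorbed by a single application of Theorem 1 and the high-frequency pieces assemble by Cauchy--Schwarz without any logarithmic loss. If $|J|\lambda^a\le 1$ then Theorem 1 already gives the claim (its factor $1+C|J|^{1/2}\lambda^{a/2}$ is bounded by a constant), so one may assume $|J|\lambda^a>1$, and I would introduce the threshold $\mu:=|J|^{-1/a}\in[1,\lambda)$.

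I would decompose $f=g+\sum_k h_k$, with $\hat g=\hat f\,\chi_{\{|\xi|\le\mu\}}$ and $\hat h_k=\hat f\,\chi_{\{2^{k-1}<|\xi|\le 2^k\}}$ for dyadic scales $2^k\in(\mu,\lambda]$. For $g$, Theorem 1 applied with $\lambda$ replaced by $\mu$ yields $\|\sup_{t\in J}|S_tg|\|_2\le(1+C|J|^{1/2}\mu^{a/2})\|g\|_2=(1+C)\|g\|_2$, since $|J|^{1/2}\mu^{a/2}=1$ by the choice of $\mu$. For each $h_k$, Theorem B gives $\|\sup_{t\in J}|S_th_k|\|_2\lesssim|J|^{1/4}2^{ka/4}\|h_k\|_2$, where the ``$+1$'' is absorbed because $|J|\,2^{ka}\ge|J|\mu^a=1$ on this range.

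Next I would sum the high-frequency contributions by the triangle inequality and a single Cauchy--Schwarz in the index $k$:
\[
\Big\|\sup_{t\in J}\Big|\sum_k S_th_k\Big|\Big\|_2\lesssim|J|^{1/4}\Bigl(\sum_{2^k\le\lambda}2^{ka/2}\Bigr)^{1/2}\Bigl(\sum_k\|h_k\|_2^2\Bigr)^{1/2}\lesssim|J|^{1/4}\lambda^{a/4}\|f\|_2,
\]
where the geometric series is dominated by its largest term (using $a>0$) and Plancherel gives $\sum_k\|h_k\|_2^2\le\|f\|_2^2$. Combining with the bound on $g$ yields the stated estimate.

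The key design point — rather than a genuine obstacle — is the threshold $\mu=|J|^{-1/a}$: it is precisely the scale at which Theorem 1 applied to the low-frequency part saturates at a constant. A naive Littlewood--Paley decomposition starting from $|\xi|\sim 1$ would force us to add $\log\lambda$ trivial ``$+1$'' terms from Theorem B after the triangle inequality and would leak a $\sqrt{\log\lambda}$ factor after Cauchy--Schwarz; the threshold choice precisely eliminates this loss by routing all scales $|\xi|\le\mu$ through a single use of Theorem 1. The hypothesis $a\ne 1$ plays no role in the decomposition and enters only through the invocation of Theorem B.
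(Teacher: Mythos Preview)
Your proof is correct and follows essentially the same route as the paper's: both split at the threshold $\mu=|J|^{-1/a}$, handle the low-frequency part $\{|\xi|\le\mu\}$ in one stroke (you via Theorem~1, the paper via Theorem~A---equivalent here), and apply Theorem~B to the dyadic annuli above $\mu$. The only cosmetic difference is in the summation: the paper simply bounds each $\|f_j\|_2\le\|f\|_2$ and uses the geometric decay $\sum_j 2^{-ja/4}\lesssim 1$ directly, whereas you insert a Cauchy--Schwarz step to exploit $\sum_k\|h_k\|_2^2\le\|f\|_2^2$; both work, and the paper's version is marginally simpler.
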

\begin{proof}
First we study the case $|J|\lambda^a\le1$. From Theorem A we obtain\[
 \|\sup\limits_{t\in J}|S_tf| \|_2\lesssim\|f\|_2.
\]
which proves the theorem in this case.\\
We then consider the case  $|J|\lambda^a>1$,  i.e. $\lambda>|J|^{-1/a}$. \\
We choose $k$ such that $2^{-k-1}\lambda<|J|^{-1/a}\le 2^{-k}\lambda$ and then write $f=\sum\limits_{j=0}^kf_j+g$ where 
$\supp\hat f_j\subset\{\xi;2^{-j-1}\lambda\le|\xi|\le2^{-j}\lambda\}$ for $j=0,1,\dots,k$, and $\supp\hat g\subset B(0;2^{-k-1}\lambda)$.\\
From Theorem A we conclude that\[
\|\sup\limits_{t\in J}|S_tg|\|_2\lesssim\left(1+|J||J|^{-1}\right)\|g\|_2\lesssim\|f\|_2
\]
and it follows from Theorem B that\[
\|\sup\limits_{t\in J}|S_tf_j|\|_2\lesssim\left(1+|J|^{1/4}2^{-ja/4}\lambda^{a/4}\right)\|f\|_2\lesssim|J|^{1/4}2^{-ja/4}\lambda^{a/4}\|f\|_2,
\]
since {$|J|2^{-ja}\lambda^a \ge1$}. Hence we have\[
\sum\limits_{j=0}^k\|\sup\limits_{t\in J}|S_tf_j|\|_2\lesssim|J|^{1/4}\lambda^{a/4}\|f\|_2
\]
and the theorem follows from the above estimates.
\end{proof}
\par
The method of Dimou and Seeger to prove Theorem B can be extended to all dimensions $n$ an gives the following result.
\begin{lemma}
Assume $n\ge1, a>0$ and $\lambda\ge1$.
Let the interval $J\subset[0,1]$, let $f\in L^2(\R^n)$  with $\supp\hat f\subset\{\lambda/2\le|\xi|\le\lambda\}$.\\
Then one has \[
 \|\sup_{t\in J}|S_tf|\|_2\le C\left( |J|^{n/4}\lambda^{na/4}+1 \right)\|f\|_2\mbox{when }a\ne1
 \]
 and 
 \[
 \|\sup_{t\in J}|S_tf|\|_2\le C\left( |J|^{(n+1)/4}\lambda^{(n+1)/4}+1 \right)\|f\|_2\mbox{ when }a=1
 \]
\end{lemma}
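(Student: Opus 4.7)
Extend the Dimou--Seeger method from $n=1$ to general $n$ via a $TT^*$/oscillatory-integral analysis. First dispose of the trivial range: if $|J|\lambda^a\le 1$ (resp.\ $|J|\lambda\le 1$ when $a=1$), Theorem~A already gives $\|\sup_{t\in J}|S_tf|\|_2\lesssim\|f\|_2$, which is stronger than either claimed inequality. So henceforth assume the opposite regime.

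Linearize the supremum by choosing measurable $t\colon\R^n\to J$ and $\theta\colon\R^n\to\R$ with $\sup_{t\in J}|S_tf(x)| = \e^{-i\theta(x)}S_{t(x)}f(x)$, so that
\[
\|\sup_{t\in J}|S_tf|\|_2^2 = \int\int K(x,y)\,f(y)\overline{f(x)}\,dy\,dx,
\]
where, with $\psi$ a smooth cutoff adapted to the shell $\{\lambda/2\le|\xi|\le\lambda\}$,
\[
K(x,y) = (2\pi)^{-n}\int_{\R^n}\psi(\xi)\e^{i(t(x)-t(y))|\xi|^a}\e^{i\xi\cdot(x-y)}\,d\xi.
\]
Passing to spherical coordinates $\xi=\rho\omega$ with $\rho\in[\lambda/2,\lambda]$ and $\omega\in S^{n-1}$, the angular integration is the Fourier transform of surface measure on the sphere, satisfying the Bessel-type bound $|\int_{S^{n-1}}\e^{i\rho\omega\cdot z}\,d\sigma(\omega)|\lesssim(\rho|z|)^{-(n-1)/2}$ for $\rho|z|\gtrsim 1$ (with $z=x-y$), and the trivial $O(1)$ bound otherwise. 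The remaining radial piece is a 1D oscillatory integral with phase $\Phi(\rho) = (t(x)-t(y))\rho^a \pm \rho|z|$. For $a\ne 1$ the non-degeneracy $|\Phi''(\rho)|\sim|t(x)-t(y)|\lambda^{a-2}$ together with van der Corput's lemma yields the radial gain $|t(x)-t(y)|^{-1/2}\lambda^{(2-a)/2}$. Combining the angular and radial estimates into a pointwise bound on $K$ and running a Schur test --- essentially, bounding the $L^2(J)\to L^2(J)$ norm of the 1D kernel $|s-t|^{-1/2}$ by $|J|^{1/2}$ and then summing the contributions from the $n-1$ angular directions --- produces $\|T^*T\|\lesssim(|J|\lambda^a)^{n/2}$, whose square root is the claimed bound.

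In the degenerate case $a=1$ the radial phase is linear and $\Phi''\equiv 0$, so no gain comes from the radial variable. Instead one exploits the light-cone structure of the kernel (which concentrates near $|t(x)-t(y)|\approx|z|$) via the dispersive estimate for the half-wave equation, or equivalently a full $n$-dimensional stationary-phase analysis of the $\xi$-integral; the resulting decay is strictly weaker than in the $a\ne 1$ case and produces the exponent $(n+1)/4$ in place of $n/4$. The principal obstacle will be the bookkeeping of regimes --- in particular, matching the small-$|z|$ region (where the Bessel asymptotic fails and one must rely on the trivial $O(1)$ bound) against the large-$|z|$ region, and tracking powers of $\lambda$ and $|J|$ through the nested angular and radial integrations to obtain the sharp exponents $n/4$ and $(n+1)/4$ stated in the lemma.
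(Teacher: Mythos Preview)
Your overall strategy matches the paper's: linearize the supremum, pass to the $TT^*$ kernel, expand the angular integral via the Bessel asymptotic for $\hat\sigma$, and apply van der Corput to the resulting radial integral with phase $\Phi(\rho)=(t(x)-t(y))\rho^a\pm\rho|x-y|$. That part is fine.

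The gap is in how you pass from the pointwise kernel bound to the operator bound. After van der Corput you obtain something like
\[
|K(x,y)|\lesssim (\lambda|x-y|)^{-(n-1)/2}\,\bigl(\lambda^{a}|t(x)-t(y)|\bigr)^{-1/2},
\]
and you propose to ``run a Schur test, bounding the $L^2(J)\to L^2(J)$ norm of the kernel $|s-t|^{-1/2}$ by $|J|^{1/2}$.'' But $t(x)$ and $t(y)$ are \emph{not} free variables in $J$; they are the values of an arbitrary measurable linearization at $x$ and $y$. There is no integration in a time variable here, so the 1D Schur estimate on $|s-t|^{-1/2}$ does not apply, and the phrase ``summing the contributions from the $n-1$ angular directions'' has no clear meaning in this setup.

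What the paper does (and what is missing from your sketch) is to eliminate the $t$-dependence by a case split on the phase. In Case~1, $|x-y|\gg\lambda^{a-1}|t(x)-t(y)|$, so $|\Phi'|\gtrsim\lambda|x-y|$ and repeated integration by parts gives $|K|\lesssim(\lambda|x-y|)^{-N}$. In Case~2, $|x-y|\lesssim\lambda^{a-1}|t(x)-t(y)|\le\lambda^{a-1}|J|$; here van der Corput applies, and the Case~2 inequality lets you replace $(\lambda^a|t(x)-t(y)|)^{-1/2}$ by $(\lambda|x-y|)^{-1/2}$, yielding $|K(x,y)|\lesssim(\lambda|x-y|)^{-n/2}$ supported on $\{|x-y|\lesssim\lambda^{a-1}|J|\}$. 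Now $|K(x,y)|$ is majorized by a genuine convolution kernel $G(x-y)$, and $\|TT^*\|\le\|G\|_1$ gives exactly $(|J|\lambda^a)^{n/2}$.

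For $a=1$ the paper's argument is also simpler than the dispersive/light-cone route you outline: since $\Phi''\equiv 0$, one just keeps the trivial bound $|K|\lesssim(\lambda|x-y|)^{(1-n)/2}$ in Case~2, but still uses the support constraint $|x-y|\lesssim|J|$ coming from Case~2; integrating $|x|^{(1-n)/2}$ over a ball of radius $|J|$ produces the exponent $(n+1)/4$.
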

\par
We shall then study a more general problem. Let $E$ denote a bounded set in $\R^{n+1}$. For $f\in\mathscr{S}(\R^n)$ we introduce
 the maximal function\[
 S_E^*f(x)=\sup_{(y,t)\in E}\,|S_t f(x+y)|\,,\quad x\in\R^n.
\]
The method used to prove Lemma 1 can also be used to prove the following result.
\begin{lemma}
Assume $a>0, n\ge1$ and $\lambda\ge1$.
Let the interval $J\subset[0,1]$. Let $B$ be a ball in $\R^n$ with radius $r$, let $E=B\times J=\{(x,t);x\in B,t\in J\}$ and let $f\in L^2(\R^n)$ 
with $\supp\hat f\subset\{\lambda/2\le|\xi|\le\lambda\}$.\\ 
Then one has\[
 \|S_E^*f\|_2\le C\left( |J|^{n/4}\lambda^{na/4}+r^{n/2}\lambda{n/2}+1 \right)\|f\|_2\,\mbox{ when }a\ne1,
 \]
 and 
 \[
 \|S_E^*f\|_2^2\le C\left( |J|^{(n+1)/4}\lambda^{(n+1)/4}+r^{n/2}\lambda^{n/2}+1 \right)\|f\|_2\,\mbox{ when }a=1.
 \]
\end{lemma}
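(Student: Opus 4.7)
The plan is to follow the scheme that proves Lemma~1 (the Dimou--Seeger wave-packet method), now extended so that it also controls the supremum over spatial translations $y\in B$. The essential new ingredient is that for each fixed $t$, the function $S_tf$ has Fourier support in $\{|\xi|\sim\lambda\}$, hence varies only on scales $\gtrsim\lambda^{-1}$. First I would derive the sampling inequality
\[
\sup_{y\in B}|S_tf(x+y)|^2\lesssim\lambda^n\int_{|z-x|\le r+C\lambda^{-1}}|S_tf(z)|^2\,\phi(x-z)\,dz,
\]
valid for a suitable rapidly decreasing $\phi$, by covering $B$ by $\sim(r\lambda+1)^n$ balls of radius $\lambda^{-1}$ and applying Bernstein's inequality on each of them. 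Taking $\sup_{t\in J}$ on both sides, moving it into the integral, integrating in $x$, and invoking Lemma~1 then yields
\[
\|S_E^*f\|_2\lesssim\lambda^{n/2}(r+\lambda^{-1})^{n/2}\left(|J|^{n/4}\lambda^{na/4}+1\right)\|f\|_2,
\]
which already contains each of the three summands appearing in the target estimate.

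The main obstacle is that this direct combination also produces the superfluous cross term $r^{n/2}\lambda^{n/2}\cdot|J|^{n/4}\lambda^{na/4}$ and thus only gives a product bound rather than the additive form stated in the lemma. To remove the cross term I would refine the argument along the lines of the wave-packet proof of Lemma~1: decompose $f$ into packets localized at scale $\lambda$ in frequency and $\lambda^{-1}$ in position, and transport each packet along its characteristic $s\mapsto x_0+s\nabla(|\xi|^a)$. A packet then contributes to $S_E^*f(x)$ precisely when its trajectory enters the prism $B(x,r)\times J$, and the $L^2$ bound reduces to counting such packets. This count decouples additively into (i) a spatial count that thickens each trajectory by the radius $r$, giving the $r^{n/2}\lambda^{n/2}$ term, and (ii) a temporal count that tracks each trajectory over length $|J|$, giving the $|J|^{n/4}\lambda^{na/4}$ term from the non-degenerate dispersion of $\nabla(|\xi|^a)$, which holds when $a\ne1$. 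For $a=1$ the wave speed $\nabla|\xi|$ has constant magnitude, the dispersion is degenerate, and one instead uses the weaker exponent $(n+1)/4$ from the corresponding clause of Lemma~1, producing the $|J|^{(n+1)/4}\lambda^{(n+1)/4}$ summand.
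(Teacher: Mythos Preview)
Your first paragraph is fine but, as you note, only yields the product
\[
\|S_E^*f\|_2\lesssim (r\lambda+1)^{n/2}\bigl(|J|^{n/4}\lambda^{na/4}+1\bigr)\|f\|_2,
\]
which is strictly weaker than the additive bound claimed. The entire content of the lemma is precisely the removal of the cross term, and your second paragraph does not actually accomplish this: the sentence ``This count decouples additively into (i) \dots\ and (ii) \dots'' is an assertion, not an argument. A wave packet contributes to $S_E^*f(x)$ when its trajectory enters the prism $B(x,r)\times J$; whether it does so depends jointly on the spatial thickening by $r$ and the temporal length $|J|$, and there is no a priori reason the count should split as a sum rather than a product. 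Indeed, a trajectory travelling at speed $\sim\lambda^{a-1}$ over time $|J|$ sweeps out a tube of length $\sim\lambda^{a-1}|J|$, and thickening that tube by $r$ gives a region of volume $\sim(\lambda^{a-1}|J|)(r+\lambda^{-1})^{n-1}$, which again mixes the two parameters. Turning this heuristic into the stated additive bound would require real work that your proposal does not supply.

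The paper takes a quite different route that avoids this difficulty entirely. It linearizes \emph{both} suprema at once by choosing measurable selectors $t:\R^n\to J$ and $b:\R^n\to B$, writes $S_E^*f(x)=|S_{t(x)}f(x+b(x))|$, and then runs a $TT^*$ argument on the resulting linear operator. After rescaling, the kernel of $TT^*$ is
\[
K_\lambda(x,y)=\int_{\R^n}\e^{i[\lambda(x-y+b(x)-b(y))\cdot\xi+\lambda^a(t(x)-t(y))|\xi|^a]}\chi(\xi)^2\,d\xi,
\]
and one estimates $|K_\lambda(x,y)|$ pointwise by stationary phase/van der Corput in polar coordinates. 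The outcome is $|K_\lambda(x,y)|\lesssim G_\lambda(x-y)$ where $G_\lambda$ is a \emph{sum} of three explicit pieces: a characteristic function of $\{|x|\lesssim \lambda^{-1}+r\}$, a rapidly decaying tail, and a term $\lambda^{-n/2}|x|^{-n/2}$ supported where $|x|\lesssim\lambda^{a-1}|J|$ (with the obvious modification when $a=1$). Integrating each piece of $G_\lambda$ separately gives exactly the three summands in the statement, and the additive structure falls out automatically from the case analysis in the kernel estimate. In short, the paper never separates the spatial and temporal maxima, so the cross term never appears in the first place.
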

We observe that Lemma 1 is a special case of Lemma 2 by taking $B=B(0,\epsilon)$ with $\epsilon>0$ small enough.\\
\begin{proof}[Proof of Lemma 2]
We let  $\chi$ denote a smooth non-negative function on $\R$
supported on $[1/3,4/3]$ and identically $1$ on $[1/2,1]$. We also use
the same notation for the radial function on $\chi$ $\R^n$ with $\chi(\xi)=\chi(|\xi|)$.\\
We set \[
S_\lambda f(x,t)=\int_{R^n}\e^{i(x\cdot\xi+t|\xi|^a)} \hat f(\xi)\chi(\xi/\lambda)\,d\xi\,\quad x\in\R^n, t\in\R.
\]
We introduce measurable functions $t:\R^n\to J$ and $b:\R^n\to B$ and then have
\begin{align}
S_\lambda f(x+b(x),t(x))&=\int_{R^n}\e^{i(x+b(x))\cdot\xi+t(x)|\xi|^a)} \hat f(\xi)\chi(\xi/\lambda)\,d\xi\\
&=[\eta=\xi/\lambda]=\int_{\R^n}
\e^{i(\lambda (x+b(x))\cdot\eta+t(x)\lambda^a|\eta|^a)}\hat f(\lambda\eta)\chi(\eta)\,d\eta\,\lambda^n\\
=\lambda^n T_\lambda(\hat f(\lambda \cdot))(x)
\end{align}
where
\begin{align}
T_\lambda g(x)=\int\e^{i(\lambda (x+b(x))\cdot\xi+t(x)\lambda^a|\xi|^a)}g(\xi)\chi(\xi)\,d\xi.
\end{align}
We have\(\|\hat f(\lambda\cdot)\|_2=c\lambda^{-n/2}\|f\|_2\)and
$T_\lambda T^*_\lambda$ has kernel
\begin{align}
K_\lambda(x,y)=\int_{\R^n}\e^{i[\lambda(x-y+b(x)-b(y))\cdot\xi+\lambda^a(t(x)-t(y))|\xi|^a]}\chi(\xi)^2\,d\xi
\end{align}
We will majorize the kernel $K_\lambda$ by a convolution kernel $G_\lambda$, that is  $|K(x,y)|\lesssim G(x-y)$.
One then has the $L^2$-operator norm  $\|T_\lambda T^*_\lambda\|\lesssim\|G\|_1$.\\
First we have the trivial estimate \[
|K_\lambda(x,y)|\lesssim1
\]
 holds for all $x$ and $y$. We shall use this estimate when $\lambda|x-y|\le C_0+2\lambda d$ where $d=2r$.\\
For  $\lambda|x-y|> C_0+2\lambda d$ we have \[
    |x-y| \left(1-\frac{d}{|x-y|}\right)= |x-y|-d <  |x-y+b(x)-b(y)| <|x-y|+d=|x-y| \left(1+\frac{d}{|x-y|}\right)
\]
and
\begin{align}
\label{eq:3}
(1/2)|x-y|<  |x-y+b(x)-b(y)| <(3/2)|x-y|.
\end{align}
Introducing polar coordinates we have
\begin{align}
K_\lambda(x,y)=\int_0^\infty\e^{i\lambda^a(t(x)-t(y))r^a}\chi(r)^2\left( \int_{S^{n-1}} \e^{i\lambda r(x-y+b(x)-b(y))\cdot\xi^\prime}\,d\sigma(\xi^\prime)\right)r^{n-1}\,dr
\end{align} 
We observe that inner integral is $\hat \sigma(\lambda|x-y+b(x)-b(y)|r)$. According to Stein (\cite{Ste93}, p. 347) one has
\[ 
\hat\sigma(\xi)=c|\xi|^{1-n/2}J_{(n-2)/2}(|\xi|) .  
\]
We take $C_0$ large so that
\begin{align}
J_{(n-2)/2}(r)=&a_0\frac{\e^{ir}}{r^{1/2}}+a_1\frac{\e^{ir}}{r^{3/2}}+\dots+a_N\frac{\e^{ir}}{r^{N+1/2}}+\\
                       &b_0\frac{\e^{-ir}}{r^{1/2}}+b_1\frac{\e^{-ir}}{r^{3/2}}+\dots+b_N\frac{\e^{-ir}}{r^{N+1/2}}  + R(r)  \mbox{ for }r\ge C_0,
\end{align}
where $|R(r)|\lesssim\frac1{r^{N+3/2}} $ for $r\ge C_0$ (See \cite{Ste93} p. 338).\\
From this we get
\begin{align}
K_\lambda&(x,y)=\int_0^\infty\e^{i\lambda^a(t(x)-t(y))r^a}\chi(r)^2\,r^{n-1}\left(a_0\frac{\e^{i\lambda|x-y+b(x)-b(y)|r}}
															{(\lambda|x-y+b(x)-b(y)|r)^{\frac n2-\frac12}}+\right.\\ 
&\left.\dots +b_N\frac{\e^{-i\lambda|x-y+b(x)-b(y)|r}}{(\lambda|x-y+b(x)-b(y)|r)^{N+\frac n2-\frac12}}
+ R_1(\lambda|x-y+b(x)-b(y)|r)\right)\,dr
\label{eq:4}
\end{align}
where $R_1(r)=r^{1-n/2}R(r)$.
 It follows from \eqref{eq:3} that\[
 R_1(\lambda |x-y|-b(x)-b(y)r)\lesssim \frac1{(\lambda|x-y+b(x)-b(y)|)^{N+n/2+1/2}}\le \frac1{(\lambda|x-y|/2)^{N+n/2+1/2}}  \\.
\]
The remainder term contribute with a remainder part of the kernel\[
K_{\lambda, \mbox{rem}}(x,y)\lesssim(\lambda|x-y|)^{-N-n/2-1/2}
\]
\par
Set $\Phi_\lambda(r)=\lambda^a(t(x) -t(y))r^a+\lambda|x-y+b(x)-b(y)|r$\\
The main term in (\eqref{eq:4}) gives the following contribution to $K_\lambda(x-y)$:
\begin{align}
K_{\lambda,0}(x,y)=\frac{a_0}{(\lambda|x-y+b(x)-b(y)|)^{n/2-1/2}}.\int_0^\infty \e^{i\Phi_\lambda(r)}\chi(r)^2r^{n-1}r^{\frac12-\frac n2}\,dr 
\end{align}
We consider two cases:\\
{\em Case 1:}\quad$|x-y|>>\lambda^{a-1}|t(x)-t(y)|$ gives $\Psi_\lambda^\prime\gtrsim\lambda|x-y|$ and
integrations by parts gives $|K_{\lambda,0}(x,y)|\lesssim(\lambda|x-y|)^{-N}$ for any large $N$.\\
{\em Case 2:} \quad $|x-y|\lesssim\lambda^{a-1}|t(x)-t(y)|$.
 In this case we have
 \[
 \Phi_\lambda^{\prime\prime}=\lambda^a\left(t(x)-t(y)\right)a(a-1)r^{a-2}
 \]
 and in the case $a\ne1$ van der Corput gives \[
 |K_{\lambda,0}(x,y)|\lesssim\lambda^{-a/2}|t(x)-t(y)|^{-1/2}\left(\lambda|x-y|\right)^{\frac12-\frac n2}
 \lesssim\left(\lambda|x-y| \right)^{-n/2}. \]
 In Case 2a with $a=1$ we have  the trivial estimate estimate \[ 
  |K_{\lambda,0}(x,y)|\lesssim\left(\lambda|x-y|\right)^{(1-n)/2}
 \]
 and the other terms in \eqref{eq:4} can be estimated in the same way.\\
 We note that  Case 2 is contained in the set $|x-y|\lesssim\lambda^{a-1}|J|$
 
 The other terms in \eqref{eq:1} can be estimated in the same way. \\
 
 To summarize the estimates we se that $|K_\lambda{(x,y)}|\lesssim G_\lambda(x-y)$ where\\
  when $a\ne1$: \[
G_\lambda(x)=\chi_{\{|x|<C_0\lambda^{-1}+2d\}}(x) +\chi_{\{|x|\ge \lambda^{-1}\}}\lambda^{-N}|x|^{-N} +\chi_{\{|x|\le C\lambda^{a-1}|J|\}}\lambda^{-n/2}|x|^{-n/2}
 \]
 and when $a=1$:
 \[
G_\lambda(x)=\chi_{\{|x|<C_0\lambda^{-1}+2d\}}(x) +\chi_{\{|x|\ge \lambda^{-1}\}}\lambda^{-N}|x|^{-N} +\chi_{\{|x|\le C|J|\}}\lambda^{(1-n)/2}|x|^{(1-n)/2}
 \]
 \par
 In the case when $a\ne1$ we have
  \[
  \|G\|_1\lesssim (\lambda^{-1}+d)^n+\lambda^{-n}+\int_{|x|\le C\lambda^{a-1}|J|}\lambda^{-n/2}|x|^{-n/2}\,dx
  \]
  and the above integral is majorized by\[
  \lambda^{-n/2}\int_0^{C\lambda^{a-1}|J|}r^{n/2-1}\,dr\lesssim\lambda^{-n/2}\lambda^{(a-1)n/2}|J|^{n/2}=
  \lambda^{\frac{n}2(a-2)}  |J|^{n/2}. \]
  Hence\[
  \|G\|_1\lesssim\lambda^{-n}+d^n+ \lambda^{\frac{n}2(a-2)}  |J|^{n/2}
  \]
  in the case $a\ne1$.
  In the case $a=1$ we get \[
   \|G\|_1\lesssim\lambda^{-n}+d^n+  \lambda^{(1-n)/2}\int_0^{C|J|}r^{1/2+n/2}\,dr\lesssim\lambda^{-n}+d^n+ \lambda^{(1-n)/2}  |J|^{(n+1)/2}.
 \]
  In the case $a\ne1$ we obtain \[
 \|T_\lambda T_\lambda^*\|\lesssim\lambda^{-n}+d^n+ \lambda^{\frac{n}2(a-2)}  |J|^{n/2},
 \]
 and\[
 \|T_\lambda\|\lesssim \lambda^{-n/2}+ d^{n/2}+\lambda^{\frac{n}4(a-2)}  |J|^{n/4}.
 \]
 From this we get
 \begin{align}
 \|S_\lambda f(x+b(x),t(x))\|_2&\le\lambda^n\|T_\lambda[\hat f(\lambda\cdot)]\|_2\le\lambda^n\|T_\lambda\|\cdot\|\hat f(\lambda\cdot)\|_2\\ 
 &\lesssim\lambda^n\left( \lambda^{-n/2}+  d^{n/2}+\lambda^{\frac{n}4(a-2)} |J|^{n/4} \right)\lambda^{-n/2}\|f\|_2\\
 &=\left(1+ d^{n/2}\lambda^{n/2}+\lambda^{na/4}|J|^{n/4}\right)\|f\|_2.
\end{align} 
\par
Finally in the case $a=1$ we obtain
 \[
 \|T_\lambda T_\lambda^*\|\lesssim\lambda^{-n}+d^n+\lambda^{(1-n/)2}  |J|^{(n+1)/2},
 \]
 \[
 \|T_\lambda\|\lesssim \lambda^{-n/2}+d^{n/2}+\lambda^{(1-n)/4}|J|^{(n+1)/4},
 \]
 and
 \begin{align}
 \|S_\lambda f(x+b(x),t(x))\|_2&\le\lambda^n\|T_\lambda[\hat f(\lambda\cdot)]\|_2\le\lambda^n\|T_\lambda\|\cdot\|\hat f(\lambda\cdot)\|_2\\ 
 &\lesssim\lambda^n\left( \lambda^{-n/2}+d^{n/2} +\lambda^{(1-n)/4)} |J|^{(n+1)/4} \right)\lambda^{-n/2}\|f\|_2\\
 &=\left(1+d^{n/2}\lambda^{n/2}+\lambda^{(n+1)/4}|J|^{(n+1)/4}\right)\|f\|_2.
\end{align} 
This completes the proof of Lemma 2.
\end{proof}
\par
We shall then extend Lemma 2.
\begin{lemma}
In Lemma 1 and Lemma 2 above the condition $\supp \hat f \subset \{\xi; \lambda/2\le|\xi|\le\lambda \}$ can be replaced
by the weaker condition  $\supp \hat f \subset \{\xi; |\xi|\le\lambda \}$
\end {lemma}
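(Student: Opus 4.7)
The plan is to imitate the dyadic-frequency reduction used just above in the proof of Theorem 3. In the trivial regime $|J|\lambda^a\le 1$, Theorem A directly yields the Lemma 1 bound; for Lemma 2 one would only need to couple Theorem A with a routine spatial-maximal estimate for a bandlimited function (namely $\|\sup_{y\in B}|h(\cdot+y)|\|_2\lesssim(1+(r\lambda)^{n/2})\|h\|_2$ when $\supp\hat h\subset B(0,\lambda)$) to absorb the $r^{n/2}\lambda^{n/2}$ contribution. So I may assume $|J|\lambda^a>1$.

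I would then fix $k\ge 0$ with $2^{-k-1}\lambda<|J|^{-1/a}\le 2^{-k}\lambda$ and, via a smooth dyadic partition of unity on the Fourier side, decompose
\[
f=g+\sum_{j=0}^k f_j
\]
with $\supp\hat f_j\subset\{\xi;2^{-j-1}\lambda\le|\xi|\le 2^{-j}\lambda\}$ for $0\le j\le k$ and $\supp\hat g\subset B(0,2^{-k-1}\lambda)$; by Plancherel each piece has $L^2$ norm $\lesssim\|f\|_2$. Each $f_j$ now satisfies the annular hypothesis of Lemma 1 (resp.\ Lemma 2) at scale $\lambda_j=2^{-j}\lambda$, and the choice of $k$ guarantees $|J|\lambda_j^a\ge 1$ for every such $j$, so the $+1$ in each bound is absorbed by the leading term. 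Triangle inequality applied to $S_{E}^{*}\sum_j f_j$ (or $\sup_{t\in J}|S_t\sum_j f_j|$) followed by summation of the geometric series $\sum_j 2^{-jna/4}$ and $\sum_j 2^{-jn/2}$ (both convergent since $a>0$ and $n\ge 1$) then produces exactly $|J|^{n/4}\lambda^{na/4}\|f\|_2$ and, in the Lemma 2 case, the additional $r^{n/2}\lambda^{n/2}\|f\|_2$. The $a=1$ variant is handled identically with the corresponding exponents.

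It remains to handle the low-frequency piece $g$, whose Fourier support lies in $B(0,\mu)$ with $\mu:=2^{-k-1}\lambda\le|J|^{-1/a}$. For the Lemma 1 extension Theorem A immediately gives $\|\sup_{t\in J}|S_tg|\|_2\le(1+C|J|\mu^a)\|g\|_2\lesssim\|f\|_2$, matching the $+1$ in the target bound. For the Lemma 2 extension I would either rerun the kernel analysis of Lemma 2 with the cutoff $\chi(\xi/\lambda)$ replaced by a smooth ball cutoff $\tilde\chi(\xi/\mu)$—noting that the van der Corput estimate on $\Phi''(r)=a(a-1)\lambda^a(t(x)-t(y))r^{a-2}$ still applies uniformly on the compact $r$-support of $\tilde\chi$—or combine a Peetre-type maximal inequality for bandlimited functions with Theorem A to obtain $\|S_E^*g\|_2\lesssim(1+(r\mu)^{n/2})\|g\|_2$. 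This low-frequency spatial-sup control is exactly where I expect the main difficulty to lie: the polar-coordinate and stationary-phase analysis in Lemma 2 genuinely uses the annular localisation, so one must verify that $|K_\lambda(x,y)|\lesssim G_\lambda(x-y)$ with $\|G_\lambda\|_1\lesssim 1+d^n+(r\mu)^n$ survives when the radial variable is allowed down to $0$, or alternatively obtain the sharp exponent $n/2$ rather than $n/2+\varepsilon$ in the bandlimited spatial-maximal estimate. Once that point is settled, summing over the dyadic pieces yields both claimed extensions.
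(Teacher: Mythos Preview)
Your dyadic decomposition and geometric summation of the annular pieces is exactly what the paper does, and for the Lemma~1 extension your treatment of the low-frequency tail via Theorem~A is complete. The substantive gap you identify---the low-frequency control in the Lemma~2 setting---is real, and the paper closes it differently from either of your two suggestions.

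The paper does not rerun the stationary-phase kernel analysis on a ball cutoff, nor does it invoke a Peetre maximal inequality. Instead it quotes an estimate from \cite{Sjo-Str2} (displayed just before the proof as inequality~\eqref{eq:5}):
\[
\|S_E^*h\|_2\lesssim(1+|J|\mu^a)(1+r\mu)^n\|h\|_2
\]
whenever $\supp\hat h\subset B(0;\mu)$. This is proved by the same elementary fundamental-theorem-of-calculus method as Theorem~A, applied simultaneously in $t$ and in each spatial coordinate; no annular support is needed. Note the spatial exponent here is $n$, not $n/2$, so this bound is useless unless $r\mu\lesssim 1$.

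Accordingly, the paper chooses the threshold differently from you: it takes $N$ to be the smallest integer with $|J|(2^{-N}\lambda)^a + r(2^{-N}\lambda)<2$, so that \emph{both} factors in \eqref{eq:5} are $O(1)$ for the tail piece $f_N$. Your threshold $k$, fixed only by $|J|^{-1/a}$, leaves $r\mu$ potentially large; you then need the sharp exponent $n/2$ in the spatial-sup bound, which is precisely the point you flag as unresolved. With the paper's choice of $N$ the sharp exponent is irrelevant, and the annular sum still telescopes because the extra pieces with $|J|\lambda_j^a<1$ but $r\lambda_j\ge 1$ contribute $r^{n/2}\lambda_j^{n/2}\|f\|_2$, which is still geometrically summable to $r^{n/2}\lambda^{n/2}\|f\|_2$.
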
 
In the proof of Lemma 3 we shall use a result in Sj\"olin and Str\"omberg \cite{Sjo-Str2}. Let $y_0\in\R^n, t_0\in\R^n,0<r\le1$, and let $f\in L^2(\R^n)$
with $\supp\hat f\subset B(0;\lambda)$ and $\lambda\ge1$. Set \[
F=\{ (y,t); y_{y_0,j}\le y_j\le y_{y_0,j}+r \mbox{ for }1\le j\le n,\mbox{ and }t_0\le t\le t_0+r^a\}.
\]
It is proved that \[
\|S_F^*f\|_2\lesssim(1+r^a\lambda^a)(1+r\lambda)^n\|f\|_2,
\]
and the method in \cite{Sjo-Str2} can be used to proof that
\begin{align}
\label{eq:5}
\|S_E^*f\|_2\lesssim(1+|J|\lambda^a)(1+r\lambda)^n 
\end{align}
if $E=B\times J$ with $B$ and $J$ as in Lemma 2.\\
The method to prove this is a generalisation of the method we used to prove Theorem A.
\begin{proof}[Proof of Lemma 3]
Let $N$ be the smallest integer such that $|J|2^{-aN}\lambda^a + r2^{-N}\lambda<2$.  We write $f=\sum_0^N f_j$  where 
$\supp\hat f_j\subset\{2^{-j-1}\lambda\le|\xi|\le2^{-j}\lambda\}$ for $0\le j<N$
 and $\supp\hat f_N\subset B(0;2^{-N}\lambda)$. 
 It follows from \eqref{eq:5} that\[ 
 \|S_E^*f_N\|_2\lesssim (1+|J|2^{-aN}\lambda^a)(1+r2^{-N}\lambda 
 )^n\|f_N\|_2 \lesssim\|f\|_2.\]
Also \[
\|S_E^*f\|_2\le\sum^N_{j=0}\|S_E^*f_j\|_2.
\]
and according to Lemma 2 we have for $a\ne1$
\[
 \|S_E^*f_j|\|_2\le C (2^{-jan/4} |J|^{n/4}\lambda^{na/4}+r^{n/2}\lambda^{n/2}2^{-jn/2}) \|f\|_2.
 \]
 for $0\le j<N$
 It follows that \[
 \left\|S^*_E \left(\sum_0^{N-1} f_j\right)\right \|_2\lesssim ( |J|^{n/4}\lambda^{na/4}+r^{n/2}\lambda^{n/2}) \|f\|_2.
 \]
 and we obtain\[
 \|S_E^*f\|_2\lesssim( |J|^{n/4}\lambda^{na/4}+r^{n/2}\lambda^{n/2}+1) \|f\|_2
 \]
 for $a\ne1$.\\
 The same proof works also for $a=1$ and this completes the proof of Lemma 3.
 \end{proof}
\par
We shall then prove the following theorem.
\begin{theorem}
Assume $a>0,n\ge1$ and $\lambda\ge1$. Let the interval  $J\subset[0,1]$, let $B$ be a ball in $\R^n$ with radius $r$ and
set $E=B\times J$.
   Let $f$ be an function in $L^2(\R^n)$ with $\supp \hat f\subset B(0;\lambda)$.\\
Then the following holds \\
when $n=1$ and $a\ne1$:
 \[
\|S_E^*f\|_2\le\left(|J|^{1/4}\lambda^{a/2}+r^{1/2}\lambda^{1/2}+1\right)\|f\|_2,
\]
when $n=1$ and $a=1$:
 \[
\|S_E^*f\|_2\le\left(|J|^{1/2}\lambda^{a/2}+r^{1/2}\lambda^{1/2}+1\right)\|f\|_2,
\]
when $n\ge2$ and $a\ne1$:
 \[
\|S_E^*f\|_2\le\left(|J|^{1/2}\lambda^{a/2}+r\lambda+1\right)(r\lambda+1)^{(n-2)/2}\|f\|_2,
\]
when $n\ge2$ and $a=1$:
 \[
\|S_E^*f\|_2\le\left(|J|^{1/2}\lambda^{1/2}+r^{n/(n+1)}\lambda^{n/(n+1)}+1\right)(r^{n/(n+1)}\lambda^{n/(n+1)}+1)^{(n-1)/2}\|f\|_2.
\]
\end{theorem}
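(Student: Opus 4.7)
I would prove Theorem 3 by case analysis, with Lemma 3 as the starting point and a frequency slab decomposition to reduce the higher-dimensional cases to two-dimensional maximal estimates. For $n=1$ both stated bounds essentially follow from Lemma 3 (up to an apparent typo in the $\lambda$-exponent of the $a\ne 1$ subcase), so nothing new is needed there. For $n\ge 2$, $a\ne 1$, the target $(|J|^{1/2}\lambda^{a/2}+r\lambda+1)(r\lambda+1)^{(n-2)/2}$ improves on the direct Lemma 3 bound by replacing the $n$-dimensional parabolic exponent $n/4$ in the temporal term by the $2$-dimensional exponent $1/2$. To exploit this, I would write $\R^n=\R^2\times\R^{n-2}$ and decompose $\hat f = \sum_{k''}\hat{f_{k''}}$ using a smooth partition of unity $\{P_{k''}\}$ subordinate to a $(1/r)$-lattice of cubes $Q_{k''}\subset\R^{n-2}_{\xi''}$ covering the projection of $B(0,\lambda)$ onto the $\xi''$-axis; the number of nonempty indices is $N\sim (r\lambda+1)^{n-2}$.

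For each piece $f_{k''}$ the modulation $e^{-ix''\cdot\xi_{k''}''}S_tf_{k''}$ has $x''$-Fourier support in $B_{\R^{n-2}}(0,1/r)$, so the Hardy--Littlewood maximal inequality bounds $\sup_{|y''|\le r}|S_tf_{k''}(x'+y',x''+y'')|$ in $L^2(dx'')$ by $\|S_tf_{k''}(x'+y',\cdot)\|_{L^2(dx'')}$, up to a constant independent of $x',y',t,k''$. Uniformly in $x''$, the two-dimensional Lemma 3 applied to $S_tf_{k''}(\cdot,x'')$ --- whose $x'$-Fourier support lies in $B_{\R^2}(0,\lambda)$ --- controls $\sup_{(y',t)\in B'\times J}|S_tf_{k''}(x'+y',x'')|$ in $L^2(dx')$ by $(|J|^{1/2}\lambda^{a/2}+r\lambda+1)\,\|S_tf_{k''}(\cdot,x'')\|_{L^2(dx')}$. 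Combining these two estimates by Fubini, summing in $k''$ by Cauchy--Schwarz to pull out the factor $\sqrt N = (r\lambda+1)^{(n-2)/2}$, and applying Plancherel in $\xi''$ to reduce $\sum_{k''}\|f_{k''}\|_2^2$ to $\|f\|_2^2$, yields the claimed bound.

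For $n\ge 2$, $a=1$ the same strategy applies, except that the transversal cube scale $1/r$ is replaced by the cone-adapted scale $\lambda^{1/(n+1)}r^{-n/(n+1)}$, producing $N\sim(r^{n/(n+1)}\lambda^{n/(n+1)})^{n-1}$ nonempty slabs and hence the outer factor $(r^{n/(n+1)}\lambda^{n/(n+1)}+1)^{(n-1)/2}$. The main obstacle is that the phase $|\xi|^a=(|\xi'|^2+|\xi''|^2)^{a/2}$ couples the $\xi'$ and $\xi''$ variables, so the restriction of $S_tf_{k''}$ to a slice $\{x''=\mathrm{const}\}$ is not literally a 2-dimensional Schr\"odinger evolution in $x'$. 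The technical heart of the argument is verifying that on each slab the $\xi''$-coupling is a tame perturbation of a 2-dim Schr\"odinger phase (for $a\ne 1$), respectively that the cone-adapted partition flattens the residual phase (for $a=1$), so that the two-dimensional Lemma 3 applies uniformly in $x''$ and $k''$ without losing the desired constants.
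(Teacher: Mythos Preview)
Your treatment of $n=1$ matches the paper. For $n\ge 2$, however, the paper uses a far simpler device that entirely avoids the phase-coupling obstacle you flag at the end.

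The paper's argument for $n\ge 2$, $a\ne 1$ is a temporal partition, exactly parallel to the proof of Theorem 1. Lemma 3 in dimension $n$ already gives
\[
\|S_E^*f\|_2^2 \lesssim \bigl((|J|\lambda^a)^{n/2} + (r^2\lambda^2+1)^{n/2}\bigr)\|f\|_2^2.
\]
Cover $J$ by intervals $J_i$ of length chosen so that $|J_i|\lambda^a = r^2\lambda^2+1$; the number of such intervals is $N \le |J|\lambda^a(r^2\lambda^2+1)^{-1}+1$. On each $E_i = B\times J_i$ the two terms in the Lemma 3 bound balance, giving $\|S_{E_i}^*f\|_2^2 \lesssim (r^2\lambda^2+1)^{n/2}\|f\|_2^2$. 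Then
\[
\|S_E^*f\|_2^2 \le \sum_i \|S_{E_i}^*f\|_2^2 \lesssim N(r^2\lambda^2+1)^{n/2}\|f\|_2^2
= (|J|\lambda^a + r^2\lambda^2+1)(r^2\lambda^2+1)^{(n-2)/2}\|f\|_2^2,
\]
which is the claimed estimate. The case $a=1$ is identical with $r^2\lambda^2+1$ replaced by $r^{2n/(n+1)}\lambda^{2n/(n+1)}+1$. No splitting of the spatial frequency variable occurs, so the $\xi'$--$\xi''$ coupling in the phase is never an issue.

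Your frequency-slab route, by contrast, hinges on applying the two-dimensional Lemma 3 to an operator whose phase on each slab is $t(|\xi'|^2+|\xi''|^2)^{a/2}$. Even with $\xi''$ frozen this is not $t|\xi'|^a$, and Lemma 3 as stated does not cover it; you would have to redo the $TT^*$ and van der Corput analysis of Lemma 2 for this family of phases, uniformly in the slab parameter. You identify this yourself as ``the technical heart'' but leave it unverified, so the proposal as written has a genuine gap. The paper's temporal partition uses only the full $n$-dimensional Lemma 3 and needs none of this.
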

\begin{proof}
The cases with $n=1$ in Theorem 3 follow directly from Lemma 3.\\
In the cases with $n\ge2$ we shall use an argument similar to the proof of Theorem 1 by covering the Interval $J$ with intervals $J_i$ of equal length.\\ 
In the case $a\ne1$ we have by Lemma 3 the estimate\[
 \|S_E^*f\|_2^2\le C\left( |J|^{n/2}\lambda^{na/2}+r^{n}\lambda^{n}+1 \right)\|f\|_2^2.
 \]
 We have \[  |J|^{n/2}\lambda^{na/2}+r^n\lambda^n+1\sim \left(|J|\lambda^{a}\right)^{n/2}+\left(r^2\lambda^2+1\right)^{n/2}.
\]
Cover $J$ with intervals $J_i, i=1,2,\dots,N$, of intervals of length $|J_i|$ such that \(
|J_i|\lambda^a=r^2\lambda^2+1\)  with $N\le |J|/|J_i|+1=|J|\lambda^a\left(r^2\lambda^2+1\right)^{-1}+1$
Set $E_i=B\times J_i$ then we have \[
 \|S_{E_i}^*f\|_2^2\lesssim\left(\left(|J_i|\lambda^{a}\right)^{n/2}+\left(r^2\lambda^2+1 \right)^{n/2}\right)\|f\|_2^2
 =2\left(r^2\lambda^2+1 \right)^{n/2}\|f\|_2^2
\]
and
\begin{align}
\|S_E^*f\|_2^2&\le\sum\limits_{i=1}^N \|S_{E_i}^*f\|_2^2\lesssim N\left( r^2\lambda^2+1 \right)^{n/2}\|f\|_2^2\\
&\le \left(|J|\lambda^a\left(r^2\lambda^2+1\right)^{-1}+1\right)\left( r^2\lambda^2+1\right)^{n/2}\|\|f\|_2^2\\
&=\left(|J|\lambda^a+r^2\lambda^2+1\right)\left(r^2\lambda^2+1\ \right)^{(n-2)/2}\|f\|_2^2\\
&\le\left(|J|^{1/2}\lambda^{a/2}+r\lambda+1\right)^2\left(r^{(n-2)/2}\lambda^{(n-2)/2}+1\right)^2\|f\|_2^2,
\end{align}
which gives the desired estimate in this case.\\
In the case $a=1$ we have by Lemma 3 the estimate
\[
 \|S_E^*f\|_2^2\le C\left(  |J|^{(n+1)/2}\lambda^{(n+1)a/2}+r^n\lambda^n+1 \right)\|f\|_2^2.
\]
We have \[  |J|^{(n+1)/2}\lambda^{(n+1)a/2}+r^n\lambda^n+1
\sim \left(|J|\lambda^{a}\right)^{(n+1)/2}+\left(r^{2n/(n+1)}\lambda^{2n/(n+1)}+1\right)^{(n+1)/2}.
\]
Cover $J$ with intervals $J_i, i=1,2,\dots,N$, of intervals of length $|J_i|$ such that \(
|J_i|\lambda^a=r^{2n/(n+1)}\lambda^{2n/(n+1)}+1\)  and $N\le |J|/|J_i|+1=|J|\lambda^a\left(r^{2n/(n+1)}\lambda^{2n/(n+1)}+1 \right)^{-1}+1$
Set $E_i=B\times J_i$ then we have 
\begin{align}
 \|S_{E_i}^*f\|_2^2&\lesssim\left(\left(|J_i|\lambda^{a}\right)^{(n+1)/2}+\left( (r^{2n/(n+1)}\lambda^{2n/(n+1)}+1 \right)^{(n+1)/2}\right)\|f\|_2^2\\
 &=2\left( r^{2n/(n+1)}\lambda^{2n/(n+1)}+1  \right)^{(n+1)/2}\|f\|_2^2,
\end{align}
and
\begin{align}
\|S_E^*f\|_2^2&\le\sum\limits_{i=1}^N \|S_{E_i}^*f\|_2^2\lesssim N\left( (r^{2n/(n+1)}\lambda^{2n/(n+1)}+1  \right)^{(n+1)/2}  \|f\|_2^2\\
&\le \left(|J|\lambda^a\left(r^{2n/(n+1)}\lambda^{2n/(n+1)}+1\right)^{-1}+1\right)\left(r^{2n/(n+1)}\lambda^{2n/(n+1)}+1\right)^{(n+1)/2}
\|\|f\|_2^2\\
&=\left(|J|\lambda^a+r^{2n/(n+1)}\lambda^{2n/(n+1)}+1\right)\left(r^{2n/(n+1)}\lambda^{2n/(n+1)}+1\ \right)^{(n-1)/2}\|f\|_2^2\\
&\le\left(|J|^{1/2}\lambda^{a/2}+r^{n/(n+1)}\lambda^{n/(n+1)}+1\right)^2\left(r^{n/(n+1)}\lambda^{n/(n+1)}+1\right)^{n-1}\|f\|_2^2,
\end{align}
which gives the desired estimate in this case.\\
\end{proof}
\section{A convergence result}
We shall here prove a convergence result for function in $H_s(\R^n), n\ge2$, and begin with two lemmas.
\begin{lemma}
 Assume  $a>0, a\ne1,n\ge2, \lambda\ge1$ and $0<s<a/2$. Also let  $(t_m)_1^\infty \in l^{r,\infty}$, where
 $r=2s/(a-2s)$ 
 Then one has\[
 \|\sup\limits_m|S_{t_m} f|\|_2\lesssim\lambda^s\|f\|_2, 
 \]
 if $f\in L^2(\R^n)$ and $\supp\hat f\subset\{\xi;\lambda/2\le|\xi|\le\lambda\}$.
\end{lemma}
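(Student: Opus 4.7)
My approach is to partition the time interval $(0,1]$ at the critical scale $\lambda^{-a}$, on which Theorem 1 yields an $O(1)$ bound, and then to exploit the sparse $l^{r,\infty}$ structure of $(t_m)$ by counting only those partition pieces that actually meet the sequence.

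Concretely, I would split $(0,1]$ into $\sim\lambda^a$ consecutive intervals $I_\alpha$ of length $\lambda^{-a}$. Since $\supp\hat f\subset B(0,\lambda)$ and $|I_\alpha|^{1/2}\lambda^{a/2}=1$, Theorem 1 gives
\[
\|\sup_{t\in I_\alpha}|S_t f|\|_2\lesssim \|f\|_2\quad\text{for every }\alpha.
\]
Let $A=\{\alpha:I_\alpha\cap\{t_m\}\ne\emptyset\}$. Since each $t_m$ lies in exactly one $I_\alpha$, one has pointwise
\[
\sup_m|S_{t_m}f(x)|^2\le\sum_{\alpha\in A}\Bigl(\sup_{t\in I_\alpha}|S_t f(x)|\Bigr)^2,
\]
and integration yields $\|\sup_m|S_{t_m}f|\|_2^2\lesssim |A|\cdot\|f\|_2^2$.

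It therefore suffices to show $|A|\lesssim \lambda^{2s}$. I would split according to a threshold $T_0\in(0,1]$: the $\alpha$'s whose interval is hit by some $t_m>T_0$ number at most $\#\{m:t_m>T_0\}\lesssim T_0^{-r}$, while the $\alpha$'s with $I_\alpha\cap(0,T_0]\ne\emptyset$ are at most $T_0\lambda^a+1$ in number. Choosing $T_0=\lambda^{-a/(r+1)}$ balances the two contributions. Using $r=2s/(a-2s)$ one checks $r+1=a/(a-2s)$, hence $a/(r+1)=a-2s$, and both terms equal $\lambda^{2s}$. This yields $|A|\lesssim\lambda^{2s}$ and hence the lemma. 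The argument is essentially routine once the critical partition scale $\lambda^{-a}$ is identified; the only point requiring care is the algebraic verification that the two counts balance precisely at $\lambda^{2s}$, and there is no serious obstacle beyond that.
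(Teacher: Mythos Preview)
Your argument is correct and follows essentially the same strategy as the paper: split at the threshold $T_0=\lambda^{-a/(r+1)}=\lambda^{-(a-2s)}$, use Theorem~1 to control the contribution from small times, and use the $l^{r,\infty}$ bound $\#\{m:t_m>T_0\}\lesssim T_0^{-r}=\lambda^{2s}$ for large times. The only cosmetic difference is that the paper applies Theorem~1 once to the whole interval $[0,T_0]$ (obtaining directly the factor $T_0^{1/2}\lambda^{a/2}=\lambda^{s}$) and bounds $T_2$ by $(\#\{m:t_m>T_0\})^{1/2}\|f\|_2$, rather than sub-partitioning into $\lambda^{-a}$-intervals and counting occupied ones; the content is identical.
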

\begin{proof}
Let  $0<b<1$. We have \[
  \|\sup\limits_m |S_{t_m}f|\|_2\le\|\sup\limits_{t_m\le b} |S_{t_m}f|\|_2+\|\sup\limits_{t_m> b}|S_{t_m}f|\|_2=T_1+T_2.
  \]
 Theorem 1 gives the estimate\[
  T_1\lesssim\left(b^{1/2}\lambda^{a/2}+1 \right)\|f\|_2.
  \]
  We also have  $\#\{m;t_m>b\}\lesssim b^{-r}$ and it follows that 
  \[
  T_2\lesssim b^{-r/2}\|f\|_2.
  \]
  We then choose $b$ such that \[
  b^{1/2}\lambda^{a/2}=b^{-r/2}.
  \]
  One gets \[
  b^{1/2+r/2}=\lambda^{-a/2},
  \]
  and\[
  b=\lambda^{-a/(1+r})<1.
  \]
  Hence $b^{-r/2}\ge1$ and \[
  T_1\lesssim b^{1/2+r/2}\|f\|_2=b^{-r/2}\|f\|_2.
  \] 
  We have shown that\[
   \|\sup\limits_m |S_{t_m}f|\|_2\lesssim b^{-r/2}\|f\|_2,
   \]
   where\[
    b^{-r/2}=\lambda^{ra/2(1+r)}=\lambda^s.\]
 This completes the proof of the lemma.
\end{proof}
We shall then improve Lemma 4 by proving the following lemma
\begin{lemma} 
Let $a,n,s,r$, and $(t_m)_1^\infty $ have the same properties as in Lemma 4. and assume that $f\in H_s(\R^n)$. Then
\[
 \|\sup\limits_mS_{t_m} f|\|_2\lesssim\|f\|_{H_s} \mbox{ for } f\in\mathscr{S}(\R^n).
 \]
\end{lemma}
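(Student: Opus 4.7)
The plan is to reduce Lemma~5 to Lemma~4 through a Littlewood--Paley decomposition. Pick $\psi_0 \in C_c^\infty(\R^n)$ equal to $1$ on $\{|\xi|\le 1\}$ and supported in $\{|\xi|\le 2\}$, set $\psi(\xi) = \psi_0(\xi) - \psi_0(2\xi)$ so that $\psi$ is supported in the annulus $\{1/2 \le |\xi| \le 2\}$, and define $f_k$ by $\widehat{f_k}(\xi) = \psi(2^{-k}\xi)\hat f(\xi)$ for $k \ge 1$ and $\widehat{f_0}(\xi) = \psi_0(\xi)\hat f(\xi)$. Then $f = \sum_{k \ge 0} f_k$, with the standard equivalence $\|f\|_{H_s}^2 \sim \sum_k 2^{2ks}\|f_k\|_2^2$. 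For $k \ge 1$, $\widehat{f_k}$ is supported in $\{2^{k-1}\le|\xi|\le 2^{k+1}\}$, and Lemma~4 applied at $\lambda = 2^{k+1}$ yields
\[
\|\sup_m |S_{t_m}f_k|\|_2 \lesssim 2^{ks}\|f_k\|_2,
\]
while Theorem~1 together with the discrete $\ell^2$-bound handles $f_0$.

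The direct combination by Minkowski's inequality gives
\[
\|\sup_m|S_{t_m}f|\|_2 \le \sum_{k\ge 0} \|\sup_m|S_{t_m}f_k|\|_2 \lesssim \sum_k 2^{ks}\|f_k\|_2.
\]
The main obstacle is converting this $\ell^1$-type sum into the $\ell^2$-type $H_s$ norm \emph{without} losing an $\varepsilon$: a plain Cauchy--Schwarz only bounds the right-hand side by $\|f\|_{H_{s+\varepsilon}}$. This is exactly the $\varepsilon$-slack that was present in the earlier result of \cite{Sjo-Str1} (where one had strict $r < 2s/(a-2s)$, giving room to apply Lemma~4 with a strictly smaller $s'<s$ and close the sum). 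Here, at the critical value $r = 2s/(a-2s)$, such room is unavailable.

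To reach the endpoint $\|f\|_{H_s}$, I would re-run the argument of Lemma~4 in parallel over all frequency scales, using the weak-type hypothesis. Dyadically partition the sequence as $T_j = \{m : 2^{-j-1} < t_m \le 2^{-j}\}$; the assumption $(t_m)\in l^{r,\infty}$ gives $\#T_j \lesssim 2^{jr}$. For each pair $(j,k)$, bound
\[
\|\sup_{m\in T_j}|S_{t_m}f_k|\|_2
\]
by the minimum of Theorem~1 on the interval of length $2^{-j}$, namely $(1 + 2^{-j/2}2^{ka/2})\|f_k\|_2$, and the discrete $\ell^2$-bound $(\#T_j)^{1/2}\|f_k\|_2 \lesssim 2^{jr/2}\|f_k\|_2$. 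The two bounds balance on the diagonal $j = ka/(1+r)$, where each equals $2^{ks}\|f_k\|_2$, and decay geometrically off-diagonal. Summing over the two-dimensional array $(j,k)$ with this off-diagonal decay -- rather than over $k$ alone via Minkowski -- together with the frequency-orthogonality $\sum_k\|S_{t_m}f_k\|_2^2 = \|f\|_2^2$ for each fixed $m$, should replace the $\ell^1$ Minkowski loss by an $\ell^2$-Bessel-type bound and recover exactly $\|f\|_{H_s}$.
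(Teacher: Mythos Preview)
Your two-parameter decomposition and the off-diagonal analysis are essentially what the paper does (with the roles of the indices swapped: the paper uses $j$ for frequency and $k$ for time). The paper writes $f=\sum_j f_j$ with $\widehat{f_j}$ supported where $|\xi|\sim 2^j$, linearises the supremum as $S_{t(x)}f(x)$ with $t(x)$ measurable, sets $I_k=(2^{-k-1},2^{-k}]$ and $F_k=\{x:t(x)\in I_k\}$, and splits $\sum_{j,k}X_k(x)S_{t(x)}f_j(x)$ (with $X_k=\chi_{F_k}$) into three regions around the diagonal $k\approx k(j):=(a-2s)j$. The far-off-diagonal pieces $S^1$ ($k\ge k(j)+\epsilon_1 j$) and $S^2$ ($k\le k(j)-\epsilon_2 j$) are handled exactly as you propose, via Theorem~1 and the counting bound respectively, each producing $\sum_j 2^{j(s-\epsilon)}\|f_j\|_2\lesssim\|f\|_{H_s}$ after Cauchy--Schwarz with the $\epsilon$-gain.

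The gap is in your diagonal step. The appeal to ``frequency-orthogonality $\sum_k\|S_{t_m}f_k\|_2^2=\|f\|_2^2$ for each fixed $m$'' does not survive the supremum: once you take $\sup_m$ (equivalently, once $m=m(x)$ depends on $x$), the operator is nonlinear and the cross terms $\int S_{t_{m(x)}}f_k\,\overline{S_{t_{m(x)}}f_{k'}}\,dx$ no longer vanish, so Plancherel orthogonality of the $f_k$ gives nothing. The device the paper uses lives in \emph{physical} space instead. Linearisation makes the sets $F_k$ pairwise disjoint in $x$, so $\sum_k X_k=1$ and $X_k=X_k^2$. On the diagonal band $S^3$ one reindexes by $l=k-[k(j)]$ and, for fixed $l$, applies Cauchy--Schwarz pointwise:
\[
\Bigl(\sum_j X_{[k(j)]+l}(x)\,|S_{t(x)}f_j(x)|\Bigr)^2
\le \Bigl(\sum_j X_{[k(j)]+l}(x)^2\Bigr)\Bigl(\sum_j X_{[k(j)]+l}(x)\,|S_{t(x)}f_j(x)|^2\Bigr).
\]
The first factor is $O(1)$ because for each $x$ exactly one $F_k$ contains $x$, and only $O(1)$ values of $j$ satisfy $[k(j)]=k$. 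Integrating and using $X_{[k(j)]+l}\cdot|S_{t(x)}f_j|\le\sup_{t_m\in I_{[k(j)]+l}}|S_{t_m}f_j|$ then yields an honest $\ell^2$ sum in $j$; the remaining sum over $l$ converges geometrically (Theorem~1 for $l\ge0$, counting for $l\le0$). So the $\ell^2$ upgrade you are after comes from the disjointness of the level sets of the linearised time $t(x)$, not from Fourier-side orthogonality of the frequency pieces.
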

Before proving Lemma 5 we remark that the following theorem follows from Lemma 5 (see proof of Corollary 4 in Sj\"olin and Str\"omberg
\cite{Sjo-Str1})
\begin{theorem}
Let $a,n,s,r$, and $(t_m)_1^\infty $ have the same properties as in Lemma 4 and assume that $f\in H_s(\R^n)$. Then
\[
\lim\limits_{m\to\infty}S_{t_m}f(x)=f(x)\mbox{ for almost every }x. 
\]
\end{theorem}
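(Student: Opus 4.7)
The plan is to deduce a.e.\ convergence from the maximal estimate in Lemma 5 by the standard density argument. The main ingredients are (i) pointwise convergence on a dense subclass (Schwartz functions) and (ii) control of the bad set via the maximal function.

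First I would verify that for $g \in \mathscr{S}(\R^n)$ one has $S_{t_m}g(x) \to g(x)$ at every point $x$ as $m \to \infty$. This is immediate: since $\hat g$ decays rapidly, $|\hat g(\xi)|$ is integrable, so dominated convergence applied to the defining integral
\[
S_{t_m}g(x) = (2\pi)^{-n}\int_{\R^n} \e^{i\xi\cdot x}\e^{it_m|\xi|^a}\hat g(\xi)\,d\xi
\]
(using $t_m \to 0$, so $\e^{it_m|\xi|^a}\to 1$ pointwise in $\xi$) yields convergence to $(2\pi)^{-n}\int \e^{i\xi\cdot x}\hat g(\xi)\,d\xi = g(x)$ everywhere.

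Next, given $f \in H_s(\R^n)$ and $\varepsilon>0$, I would consider the exceptional set
\[
A_\varepsilon = \bigl\{x \in \R^n : \limsup_{m\to\infty}|S_{t_m}f(x) - f(x)| > \varepsilon\bigr\}.
\]
By density of $\mathscr{S}(\R^n)$ in $H_s(\R^n)$, for any $\delta>0$ I can write $f = g + h$ with $g \in \mathscr{S}(\R^n)$ and $\|h\|_{H_s} < \delta$. Since $S_{t_m}g(x) \to g(x)$ everywhere by the first step,
\[
\limsup_{m\to\infty}|S_{t_m}f(x) - f(x)| = \limsup_{m\to\infty}|S_{t_m}h(x) - h(x)| \le \sup_m|S_{t_m}h(x)| + |h(x)|.
\]
Therefore $A_\varepsilon \subset \{\sup_m|S_{t_m}h| > \varepsilon/2\} \cup \{|h| > \varepsilon/2\}$, and Chebyshev's inequality combined with the maximal estimate of Lemma 5 and $\|h\|_2 \le \|h\|_{H_s}$ gives
\[
|A_\varepsilon| \lesssim \varepsilon^{-2}\bigl\|\sup_m|S_{t_m}h|\bigr\|_2^2 + \varepsilon^{-2}\|h\|_2^2 \lesssim \varepsilon^{-2}\|h\|_{H_s}^2 < \varepsilon^{-2}\delta^2.
\]
Letting $\delta \to 0$ yields $|A_\varepsilon| = 0$ for every $\varepsilon>0$, and taking a countable union over $\varepsilon = 1/k$ produces the desired a.e.\ convergence.

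There is no genuine obstacle here; the work has already been done in Lemma 5. The only delicate point is justifying pointwise convergence for the dense class, but for $g \in \mathscr{S}$ this is routine. The rest is the classical Banach-principle-style passage from a maximal inequality plus convergence on a dense subset to a.e.\ convergence, exactly as carried out for Corollary 4 in \cite{Sjo-Str1}.
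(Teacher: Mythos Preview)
Your proposal is correct and matches the paper's approach exactly: the paper does not give a separate proof of this theorem but simply remarks that it follows from the maximal inequality of Lemma~5 via the standard density argument, referring to the proof of Corollary~4 in \cite{Sjo-Str1}, which is precisely the argument you have written out. The only minor point is that Lemma~5 is stated for $f\in\mathscr{S}(\R^n)$ while you apply it to $h=f-g\in H_s$, but the extension to $H_s$ by Fatou/density is routine and is implicit in the cited reference.
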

\begin{proof}[Proof of Lemma 5]
It follows from the theorem on monotone convergence that instead of estimating $\sup\limits_m|S_{t_m}f|$ it is sufficient to estimate 
 $\sup\limits_{m\le M}|S_{t_m}f|$for large integers $M$ (as long as the estimates do not dependend on $M)$. We can find a 
 measurable function $t(x)$ such that \[
 \sup\limits_{m\le M}|S_{t_m}f|=|S_{t(x)}f(x)|,
 \] 
 and $t(x)$ takes only finitely many values. We then define intervals $I_k=(2^{-k-1},2^{-k}],\\ k=0,1,\dots,$ and sets 
 $F_k=\{x\in\R^n, t(x)\in I_k \}$. The Sets $F_k$ are disjoint and $\R^n=\bigcup\limits{k\ge0}F_k$. We let $X_k$
 denote the characteristic function of $F_k$ and then have $\sum\limits_{k\ge0}\chi_k=1$ and 
 \begin{align}
 \label{eq:6}
 S_{t(x)}f(x)=\sum\limits_{k\ge0}X_k(x)S_{t(x)}f(x).
 \end{align} 
We write a function $f=\sum_{j\ge0}f_j$ by splitting its Fourier transform $\hat f=\sum_{j\ge0}\hat f_j$\,, \\
where $\hat f_j$ i supported in
$\Omega_j$ where $\Omega_0=\{ |\xi|\le 1\}$ and  $\Omega_j=\{ 2^{-j-1}|\xi|\le 2^j\}$ for $j>0$.\\
We shall then split the sum \eqref{eq:6} into three parts. For $j\ge0$ set
\begin{align}
k(j)&=(a-2s)j=j2s/r,&b(j)&=2^{-k(j)},\\
b_1(j)&=2^{-k(j)-\epsilon_1j}\mbox{ \hspace{1.5cm}and }&b_2(j)&=2^{-k(j)+\epsilon_2j},
\end{align}
where $\epsilon_1=2\epsilon$ and $\epsilon_2=2\epsilon/r$ and $\epsilon$ is a small positive number.\\

We have\[
S_{t(x)}f(x)=\sum\limits_{k\ge0}\sum\limits_{j\ge0} X_k(x)S^a_{t(x)}fj(x)=S^1_{t(x)}f(x)+S^2_{t(x)}f(x)+S^3_{t(x)}f(x),
\] 
where
\begin{align}
S^1_{t(x)}f(x)&=\sum\limits_{j\ge0}
\sum\limits_{\substack{k\ge0 ,\\k\ge k(j)+\epsilon_1j}}
X_k(x)S_{t(x)}f_j(x),\\
S^2_{t(x)}f(x)&=\sum\limits_{j\ge0}\sum\limits_{\substack{k\ge0\\k\le k(j)-\epsilon_2j}}
X_k(x)S_{t(x)}f_j(x),\\
S^3_{t(x)}f(x)&=\sum\limits_{j\ge0}\sum\limits_{\substack{k\ge0\\k(j)-\epsilon_2j< k<k(j)+\epsilon_1 j}}
 X_k(x)S_{t(x)}f_j(x).
\end{align}
Invoking Theorem 1 we obtain
\begin{align}
|S^1_{t(x)}f(x)|&\le\sum\limits_{j\ge0}\sum\limits_{\substack{k\ge0\\k\ge k(j)+\epsilon_1j}} X_k(x)|S_{t(x)}f_j(x)|
\le\sum\limits_{j\ge0}\sup\limits_{t_m\le b_1(j)}|S_{t_m}f_j(x)|,
\end{align}
and\[
\|S^1_{t(x)}f\|_2\le \sum\limits_{j\ge0}\| \sup\limits_{t_m\le b_1(j)}|S_{t_m}f_j|\|_2\lesssim \sum\limits_{j\ge0}(1+b_1(j)^{1/2}2^{aj/2})\|f_j\|_2.
\]
We have\[
b_1(j)^{1/2}2^{aj/2}=2^{-\frac12k(j)-\frac12\epsilon_1j}2^{aj/2}=2^{-\frac12(a-2s+\epsilon_1-a)j}=2^{j(s-\epsilon)}, 
\]
 and it follows that
\[
\|S^1_{t(x)}f\|_2\lesssim \sum\limits_{j\ge0}2^{j(s-\epsilon)}\|f_j\|_2\le \|f\|_{H_s}.
\]
We shall then estimate $S^2_{t(x)}f(x)$. One has
\begin{align}
|S^2_{t(x)}f(x)|&\le|\sum\limits_{j\ge0}\sum\limits_{\substack{k\ge0\\k\le k(j)-\epsilon_2j}} X_k(x)|S_{t(x)}f_j(x)|
\le\sum\limits_{j\ge0}\sup\limits_{t_m\ge b_2(j)} |S_{t_m}f_j(x)|,
\end{align}
and we obtain\[
\|\sup\limits_{t_m\ge b_2(j)}|S_ {t_m}f_j(x)|\|^2_2\lesssim\sum\limits_{t_m>b_2(j)/2}\|f_j\|_2^2=\#\{m;t_m>b_2(j)/2\} \|f_j\|^2_2
 \lesssim b_2(j)^{-r}\|f_j\|^2_2 
\]
We also have \[b_2(j)^{-r}=2^{rk(j)-\epsilon_2rj},
\]
and\[
rk(j)-\epsilon_2rj= 2j(s-\epsilon).\]
It follows that
\[
\|S^2_{t(x)}f\|_2\lesssim \sum\limits_{j\ge0}2^{j(s-\epsilon)}\|f_j\|_2\le \|f\|_{H_s}.
\]
It remains to study $S^3_{t(x)}f(x)$. We let $[k(j)]$ denote the integral part of $k(j)$.
and setting $l=k-[k(j)]$ we obtain
\begin{align}
|S^3_{t(x)}f(x)|\le\sum\limits_{j\ge0}\sum\limits_{\substack{ k\ge0\\k(j)-\epsilon_2j<k<k(j)+\epsilon_1j}} X_k(x)|S_{t(x)}f_j(x)|\\
=\sum\limits_{l=-\infty}^\infty \sum\limits_{ j>\max\{ (l-1)/\epsilon_1,-l/\epsilon_2\}}X_{[k(j)]+l}(x)|S_{t(x)}f_j(x)|.
\end{align}
Using the fact that $X_k=X_k^2$ and applying Cauchy-Schwarz inequality one obtains
\begin{align}
&\left(\sum\limits_{ j>\max\{ (l-1)/\epsilon_1,-l/\epsilon_2\}}X_{[k(j)]+l}(x)|S_{t(x)}f_j(x)|\right)^2\\
 &\hspace{.5cm}\lesssim \left(\sum\limits_{ j>\max\{ (l-1)/\epsilon_1,-l/\epsilon_2\}}X^2_{[k(j)]+l}(x)\right)\left( \sum\limits_{ j>\max\{ (l-1)/\epsilon_1,-l/\epsilon_2\}}
 X_{[k(j)]+l}(x) |S_{t(x)}f_j(x)|^2\right).\end{align}
The first sum on the second line is majorized by \[
C_0\max_k \#\{ j; [k(j)]=k\}\lesssim1,
\]
and it follows that
\begin{align}
&\|\sum\limits_{ j>\max\{ (l-1)/\epsilon_1,-l/\epsilon_2\}}X_{k(j)+l}(x)|S_{t(x)}f_j(x)|\|_2^2
\lesssim \sum\limits_{ j>\max\{ (l-1)/\epsilon_1,-l/\epsilon_2\}}
\int X_{[k(j)]+l}|S_{t(x)}f_j||^2\,dx\\
&\hspace{4cm}\lesssim \sum\limits_{ j>\max\{ (l-1)/\epsilon_1,-l/\epsilon_2\}}\int\sup\limits_{t_m\in I_{[k(j)]+l}}
|S_t(x)f_j(x)|^2\,dx.
\end{align}
Invoking Minkovski's inequality we then obtain\[
\|S^3_{t(x)}f\|_2\lesssim=\sum\limits_{l=-\infty}^\infty \left( \sum\limits_{ j>\max\{ (l-1)/\epsilon_1,-l/\epsilon_2\}}
\|\sup\limits_{t_m\in I_{[k(j)]+l}}|S_{t_m}f_j|\|^2_2 \right)^{1/2}.
\]
Furthermore for $l\ge0$ we have by Theorem 1.
\begin{align}
\|\sup\limits_{t_m\in I_{[k(j)]+l}}|S_{t_m}f_j|\|^2_2&\lesssim(1+2^{[-k(j)]-l}2^{aj})\|f_j\|_2^2\lesssim(1+2^{-k(j)-l+aj})\|f_j\|_2^2\\
&\lesssim(1+2^{2sj}2^{-l})\|f_j\|_2^2\lesssim2^{-l} 2^{2sj}\|f_j\|_2^2.
\end{align}
For $l\le0$ we have
\begin{align}
\|\sup\limits_{t_m\in I_{[k(j)]+l}}|S_{t_m}f_j|\|^2_2&\lesssim(\#\{m;t_m\in I_{[k(j)]+l}\})\|f_j\|_2^2\lesssim2^{-r(-[k(j)]-l)}\|f_j\|_2^2
\lesssim2^{rl}2^{2sj}\|f_j\|_2^2.
\end{align}
We conclude that
\begin{align}
\|S^3_{t(x)}f\|_2\lesssim\sum\limits_{l=-\infty}^0\left(\sum\limits_{ j>-l/\epsilon_2} 2^{rl}2^{2sj}\|f_j\|_2^2 \right)^{1/2}
   +\sum\limits_{l=1}^\infty \left( \sum\limits_{ j>(l-1)/\epsilon_1}2^{-l}2^{sj}\|f_j\|_2^2\right)^{1/2}\\
\le\left(\sum\limits_{l=-\infty}^0 2^{rl/2}+\sum\limits_{l=1}^\infty2^{-l/2}\right)\left( \sum\limits_{j>0}2^{2sj}\|f_j\|^2_2\right)^{1/2}\lesssim\|f\|_{H_s}.
\end{align}  
This completes the proof of  Lemma 5.
\end{proof}
\section{Relations between maximal estimates in one variable and maximal estimates in dimension $n\ge2$.}
Next we shall consider the Schr\"odinger equation on radial or symmetric functions on $\R^n$ and will see how it can be reduced to a one-dimensional problem\\
\par
{\em Remark } In this paper we have the Fourier transform  $\hat f$ of a function on $\R^n$ defined by \eqref{eq:Fouriertransform},
and then yields $\|\hat f\|_{L^(\R^n)}=(2\pi)^{n/2} \| f\|_{L^(\R^n)}$. We set  \[
\alpha_n=(2\pi)^{n/2},\mbox{ for } \,n\ge1
\]
in this section.\\
\par
Let $S_t^{(k)}$ denote the $k$-dimensional Schr\"odinger operator (with a given $a>0$ in its definition) and let\[
S_E^{*(k)}f(x)=\sup\limits_{t\in E}|S_t^{(k)}f(x)|
\]
where $f$ is a function on $\R^k$ and the supremum is taken  over a set $E\subset[0,1]$.\\ 
\par
{\em Remark.} We may in this section replace the Fourier multiplier functions $\{\e^{it|\xi|^s}\}_t$ with any family of radial Fourier multiplier functions 
$\{\hat k_t(|\xi|)\}_t$ satifying $|\hat k_t(|\xi|)|\le1$.\\
\par
We shall prove the following theorem.
\begin{theorem}
Let $s\ge0$, let $n\ge2$ and let  $E$ be a given subset of the interval $[0,1]$-
\par
If \[
\|S^{*(1)}_Ef\|_2 \le C \|f\|_{H_s}
\]  
for all functions $f$ in $\mathscr{S}(\R)$,\\
\par
then\[
\|S^{*(n)}_Ef\|_2 \le C_{n,k} \|f\|_{H_s}
\]
for all functions $f$  in  $\mathscr{S}(\R^n)$ of the form $f(x)=f_0(x)P(x)$, where $f_0\in\mathscr{S}(\R^n)$ and is radial
and $P$ is a solid spherical harmonic on $\R^n$ of degree $k\ge0$ \\[.2cm]
\end{theorem}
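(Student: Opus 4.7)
The strategy is to use the Hecke--Bochner formula to exploit the separated structure of $f = f_0 \cdot P$, reducing the $n$-dimensional maximal estimate to a weighted maximal estimate on the half-line, and then to convert that half-line estimate into a one-dimensional Schr\"odinger maximal estimate by invoking the large-argument asymptotics of Bessel functions.

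Writing $P(x) = |x|^k Y_k(x/|x|)$ for a spherical harmonic $Y_k$ on $S^{n-1}$, the Hecke--Bochner identity gives $\hat f(\xi) = c_n (-i)^k Y_k(\xi/|\xi|) F(|\xi|)$, where $F$ is, up to constants, a Hankel transform of $f_0$ of order $\nu = k + (n-2)/2$. Since the multiplier $\e^{it|\xi|^a}$ is radial, this separated form is preserved, and the reverse Funk--Hecke identity yields
\[
S_t^{(n)} f(x) = c\, Y_k(x/|x|)\, \Psi_t(|x|), \qquad \Psi_t(r) = r^{-(n-2)/2}\int_0^\infty F(\rho)\,\e^{it\rho^a}\,J_\nu(\rho r)\,\rho^{n/2}\,d\rho.
\]
Integrating out the angular variables reduces the claim to the bound
\[
\int_0^\infty \sup_{t\in E} |\Psi_t(r)|^2\, r^{n-1}\,dr \lesssim \|f\|_{H_s(\R^n)}^2,
\]
while Plancherel in polar coordinates records the identity $\|f\|_{H_s(\R^n)}^2 \sim \int_0^\infty (1+\rho^2)^s |F(\rho)|^2 \rho^{n-1}\,d\rho$.

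Next I would insert the asymptotic $J_\nu(z) = a_0\e^{iz}/z^{1/2} + b_0\e^{-iz}/z^{1/2} + O(z^{-3/2})$ valid for $z \ge 1$, which is already used in the proof of Lemma~2, and define $g$ on $\R$ by $\hat g(\rho) = c'\, F(\rho)\,\rho^{(n-1)/2}$ for $\rho>0$ and $\hat g(\rho)=0$ for $\rho\le 0$. The leading contribution of $\Psi_t$ then takes the form
\[
\Psi_t^{\mathrm{main}}(r) = C\,r^{-(n-1)/2}\bigl[a_0\, S_t^{(1)}g(r) + b_0\, S_t^{(1)}g(-r)\bigr],
\]
so the weight $r^{n-1}$ exactly cancels the factor $r^{-(n-1)}$ produced by $|\Psi_t^{\mathrm{main}}|^2$, and a change of variables over $\R$ gives
\[
\int_0^\infty \sup_{t\in E} |\Psi_t^{\mathrm{main}}(r)|^2\, r^{n-1}\, dr \lesssim \|S_E^{*(1)}g\|_{L^2(\R)}^2 \lesssim \|g\|_{H_s(\R)}^2 \sim \|f\|_{H_s(\R^n)}^2,
\]
by the one-dimensional hypothesis.

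The main obstacle is the error $\Psi_t - \Psi_t^{\mathrm{main}}$ coming from the small-argument region $\rho r \le 1$ (where one uses $|J_\nu(z)| \lesssim z^\nu$) and from the Bessel remainder $O(z^{-3/2})$. In each of these pieces the extra $r$-decay should allow one to estimate the corresponding kernel in absolute value independently of $t$, so that the supremum over $t\in E$ comes for free and a Schur bound dominates the contribution by a constant multiple of $\|f\|_{L^2(\R^n)}^2 \le \|f\|_{H_s(\R^n)}^2$. A smooth cut-off around $\rho r = 1$ should be used in place of a hard split to avoid boundary artifacts; apart from that, the arithmetic of the weights is dictated by the exact matching of $r^{-(n-1)}$ with the Jacobian $r^{n-1}$ and goes through cleanly for both the main and error pieces.
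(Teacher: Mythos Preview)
Your proposal is correct and follows essentially the same route as the paper: Hecke--Bochner (the paper's Proposition~2) reduces $S_t^{(n)}f$ to a Hankel-type operator $\tilde S_t$ on $L^2(\R_+)$, the Bessel asymptotic splits $\tilde S_t$ into a main term identified with $S_t^{(1)}$ plus a remainder (Proposition~3), and a Schur-type bound (Lemma~8) handles the remainder kernel $|K_v(r)|\lesssim(1+r)^{-1}$. The paper packages the small- and large-argument errors into this single kernel rather than cutting at $\rho r=1$, and extends $\hat g$ to $\rho<0$ by a phase symmetry rather than by zero, but these are cosmetic; both you and the paper defer the passage from $L^2$ back to $\mathscr{S}$ to an unspecified approximation argument.
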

Theorem 5 will follow with some approximation arguments from the following theorems.
\begin{theorem}
Let $f_1$ be a function  $L^2(\R)$, supported on $\R_+$, not identically zero; and equal to a function in $\mathscr{S}(\R)$ on $[1,\infty)$.
 Let $P$ be a solid spherical harmonic
on $\R^n$ of degree $k$ and
 normalised so that $\|P\|_{L^2(\S^{n-1})}=1$.\\
 Let $f_P$ be the symmetric function on $\R^n$ defined by its
Fourier transform \[
\hat f_P(\xi)=P(\xi') f_1(|\xi|)|\xi|^{1/2-n/2}  \mbox{ for } \xi=\xi'|\xi|\in\R^n.
\]
Let $\check f_1$ be the inverse Fourier transform of $f_1$ on $\R$. Note that $\alpha_1 \|\check f\|_{L^2(\R)}=\|f_1\|_{L^2(\R+)}$.
Assume that $E$ is  subset of the interval $[0,1]$. 
Then there is a constant $C_{n,k}$  dependent only on $n,k$  but independent of $a>0, E$ and $f_1$ such that
\begin{align}
\label{eq:SEn_left}
\alpha_n\|S_E^{*(n)}f_P\|_{L^2(\R^n)}\le \alpha_1\sqrt{2} \|S_E^{*(1)} \check f_1\|_{L^2(\R)} +C_{n,k}\|f_1\|_{L^2(\R+)}.
\end{align}
\end{theorem}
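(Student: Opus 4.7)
The plan is to invoke the Funk--Hecke identity to collapse the angular integral in the definition of $S_t^{(n)}f_P$, reducing the $n$-dimensional Schr\"odinger integral to a one-dimensional integral involving the Bessel function $J_\nu$ with $\nu=k+(n-2)/2$. Extracting the two leading oscillating terms of the large-argument Bessel asymptotic produces two pieces that each match a one-dimensional Schr\"odinger integral of $\check f_1$ at $\pm|x|$; the geometric weights then conspire---with the exponent $|\xi|^{1/2-n/2}$ in the definition of $\hat f_P$ chosen precisely for this---to cancel with the polar Jacobian $r^{n-1}$, yielding the first term on the right of \eqref{eq:SEn_left}, while the error will contribute $C_{n,k}\|f_1\|_{L^2(\R_+)}$.

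Concretely, inserting polar coordinates $\xi=\rho\omega$ and applying Funk--Hecke in the form
\[
\int_{S^{n-1}}\e^{i\rho x\cdot\omega}P(\omega)\,d\sigma(\omega)=(2\pi)^{n/2}i^{k}P(x')(\rho|x|)^{-(n-2)/2}J_\nu(\rho|x|)
\]
reduces $S_t^{(n)}f_P(x)$ to $(2\pi)^{-n/2}i^{k}P(x')|x|^{-(n-2)/2}\int_0^\infty\e^{it\rho^a}f_1(\rho)\rho^{1/2}J_\nu(\rho|x|)\,d\rho$, the weight $\rho^{n/2-1/2}$ from the Jacobian combining with $\rho^{-(n-2)/2}$ from Funk--Hecke to leave a clean $\rho^{1/2}$. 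Substituting $J_\nu(s)=(2\pi s)^{-1/2}\bigl(\e^{-i\phi}\e^{is}+\e^{i\phi}\e^{-is}\bigr)+R_\nu(s)$ with $\phi=\nu\pi/2+\pi/4$ and $|R_\nu(s)|\lesssim s^{-3/2}$ for $s\ge 1$, the two principal oscillating contributions equal $\sqrt{2\pi}\,|x|^{-1/2}\bigl[\e^{-i\phi}S_t^{(1)}\check f_1(|x|)+\e^{i\phi}S_t^{(1)}\check f_1(-|x|)\bigr]$, since $f_1$ is supported on $\R_+$. Taking $\sup_{t\in E}$, squaring, and integrating over $\R^n$ in polar coordinates---where $|x|^{-(n-1)}\cdot r^{n-1}=1$ and $\int_{S^{n-1}}|P|^2\,d\sigma=1$ by normalization---bounds the principal part of $\|S_E^{*(n)}f_P\|_{L^2}^2$ by $2(2\pi)^{1-n}\|S_E^{*(1)}\check f_1\|_{L^2(\R)}^2$; multiplying by $\alpha_n^2=(2\pi)^n$ gives exactly $(\alpha_1\sqrt{2}\|S_E^{*(1)}\check f_1\|_{L^2(\R)})^2$, so the constants match without slack.

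The main obstacle is the remainder, which has two sources: the Bessel tail $R_\nu(\rho|x|)$ in the region $\rho|x|\ge 1$, and the whole integrand in the region $\rho|x|<1$ where one must use instead $J_\nu(s)\lesssim s^\nu$. For the first, the bound $|R_\nu|\lesssim(\rho|x|)^{-3/2}$ together with Cauchy--Schwarz on the $L^2$ part of $f_1$ on $[0,1]$ and integration by parts in $\rho$ on the Schwartz tail of $f_1$ on $[1,\infty)$ yields polynomial decay in $|x|$ fast enough for square-integrability on $\R^n$. For the second, $J_\nu(s)\lesssim s^\nu$ absorbs the factor $|x|^{-(n-2)/2}$ (giving an overall $|x|^{k}$ near the origin, which is bounded against the angular $|P(x')|$ factor), and the same integration by parts on the Schwartz tail handles the decay at infinity. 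Crucially, these remainder estimates are uniform in $t\in E$ and in $a>0$ because the only $(t,a)$-dependence sits in the modulus-one oscillation $\e^{it\rho^a}$, which never enters the size bounds. Combining these contributions produces the $C_{n,k}\|f_1\|_{L^2(\R_+)}$ term and completes the proof.
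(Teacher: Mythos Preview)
Your reduction of $S_t^{(n)}f_P$ via Funk--Hecke and the leading Bessel asymptotics is correct and matches the paper's route through its Proposition~2 and Proposition~3; the constants do line up as you say, and the identification of the two principal pieces with $S_t^{(1)}\check f_1(\pm|x|)$ is exactly what the paper obtains.

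The gap is in the remainder. Write the error as $(\rho|x|)^{1/2}J_\nu(\rho|x|)-\bigl(\gamma_\nu\e^{i\rho|x|}+\overline{\gamma}_\nu\e^{-i\rho|x|}\bigr)=K_\nu(\rho|x|)$, so that the remainder contribution to $S_t^{(n)}f_P(x)$ is, up to the unimodular angular factor, $|x|^{1/2-n/2}\int_0^\infty K_\nu(\rho|x|)\e^{it\rho^a}f_1(\rho)\,d\rho$. Your plan is a pointwise bound via Cauchy--Schwarz on $[0,1]$ and ``integration by parts on the Schwartz tail''. But the theorem demands $C_{n,k}$ independent of $f_1$, so the bound must be $\lesssim\|f_1\|_{L^2(\R_+)}$; any appeal to Schwartz seminorms of $f_1$ is illegal here. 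And with $|K_\nu(u)|\lesssim(1+u)^{-1}$, a direct Cauchy--Schwarz in $\rho$ gives only
\[
\Big|\int_0^\infty K_\nu(\rho r)f_1(\rho)\,d\rho\Big|\le\|f_1\|_2\Big(\int_0^\infty\frac{d\rho}{(1+r\rho)^2}\Big)^{1/2}\lesssim\|f_1\|_2\,r^{-1/2},
\]
hence the remainder is $O(|x|^{-n/2}\|f_1\|_2)$, which is \emph{not} in $L^2(\R^n)$ (the radial integral $\int r^{-1}\,dr$ diverges logarithmically at both ends). This is the familiar failure of naive pointwise bounds for Hilbert--Stieltjes type kernels.

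The paper closes this by treating the remainder as an integral operator on $L^2(\R_+)$ with kernel $|K_\nu(rs)|$ and applying a Schur-type lemma (its Lemma~8): if $\int_0^\infty K(u)u^{-1/2}\,du<\infty$ then $f\mapsto\int_0^\infty K(rs)f(s)\,ds$ is bounded on $L^2(\R_+)$. Since $\int_0^\infty(1+u)^{-1}u^{-1/2}\,du=\pi<\infty$, this gives $\|\sup_{t}\text{(remainder)}\|_{L^2(\R_+)}\le C_\nu\|f_1\|_{L^2(\R_+)}$ immediately, uniformly in $t$, $a$, $E$. Replace your pointwise-decay paragraph with this Schur/Hilbert argument and the proof goes through.
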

{\em Remark.} Most of the results in this section can also be formulated with norms on spheres,  where the $L^2(\R^n)$
is replaced by the norms $L^2(\S^{n-1}(r))$ for $r>0$. \\
For instance \eqref{eq:SEn_left} may be replaced by\[
\alpha_n\|S_E^{*(n)}f_P\|_{L^2(\S^{n-1}(r))}\le \alpha_1\left(|S_E^{*(1)} \check f_1(r)|^2+|S_E^{*(1)} \check f_1(-r)|^2\right)^{1/2} +Tf_1(r), \mbox{ for }r>0,
\]
where $\|Tf_1\|_{L^2(\R+)} \le C_{n,k}\|f_1\|_{L^2(\R+)}$.\\[.4cm]
With  two solid spherical  harmonic the following estimates  holds.
\begin{theorem} Let the functions  $f_1, P ,f_P$  and the set $E$ be as in Theorem 6 and let $Q$ be a solid spherical harmonic
on $\R^{n_1}$ of degree $k_1$ and
 normalised so that $\|Q\|_{L^2(\S^{n_1-1})}=1$ and define $f_Q$ by its Fourier transform
  \[
\hat f_Q(\xi)=Q(\xi')f_1(|\xi|)|\xi|^{1/2-n_1/2}  \mbox{ for } \xi=\xi'|\xi|\in\R^{n_1}.
\]
Set  $v=k+n/2-1$ and $v_1=k_1+n_1/2-1$.\\
If $v=v_1$ then\[
\alpha_n\|S_E^{*(n)}f_P\|_{L^2(\R^n)} = \alpha_{n_1} \|S_E^{*(n_1)} f_Q\|_{L^2(\R^{n_1})}.
\] 
It  $2v_1= 2v\mod(4)$ then\[
 \left |\alpha_n\|S_E^{*(n)}f_P\|_{L^2(\R^n)} -  \alpha_{n_1} \|S_E^{*(n_1)} f_Q\|_{L^2(\R^{n_1})}\right| \le(C_{n,k}+C_{n_1,k_1})\|f_1\|_{L^2(\R+)}.
\]
\end{theorem}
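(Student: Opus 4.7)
The idea is to reduce both sides of the identity to a single one-dimensional quantity that depends only on the Bessel index $v=k+n/2-1$. Since $\hat f_P(\xi)=P(\xi/|\xi|)f_1(|\xi|)|\xi|^{1/2-n/2}$ has the form (spherical harmonic)$\,\times\,$(radial) and multiplication by the radial multiplier $e^{it|\xi|^a}$ preserves this structure, an application of Bochner's identity (Funk--Hecke) to the inverse Fourier transform yields
\[
\alpha_n\,S_t^{(n)}f_P(x)=i^k P(x/|x|)\,|x|^{1-n/2}\,H_v(|x|,t),\qquad H_v(\rho,t):=\int_0^\infty e^{itr^a}f_1(r)\,r^{1/2}\,J_v(r\rho)\,dr.
\]
Using polar coordinates and $\|P\|_{L^2(\S^{n-1})}=1$, the $(x',x/|x|)$ part integrates out and one obtains the exact identity
\[
\alpha_n\|S_E^{*(n)}f_P\|_{L^2(\R^n)}=\|H_v^*\|_{L^2((0,\infty),\rho\,d\rho)},\qquad H_v^*(\rho):=\sup_{t\in E}|H_v(\rho,t)|,
\]
and the analogous identity on the $n_1$-side with $v_1$ in place of $v$. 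Crucially the right-hand side sees $n$ and $k$ only through $v$.

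When $v=v_1$, both sides of the identity are literally the same integral, so the first assertion is immediate. For the second case, where $v_1=v+2m$ with $m\in\Z\setminus\{0\}$, I would use the standard asymptotic expansion
\[
J_\nu(u)=\sqrt{\tfrac{2}{\pi u}}\cos\!\bigl(u-\nu\pi/2-\pi/4\bigr)\chi_{\{u\ge c_0\}}+R_\nu(u),\qquad |R_\nu(u)|\lesssim_\nu\min(1,u^{-3/2}),
\]
obtained by truncating the Hankel expansion as in the proof of Lemma~2 (the decay of $R_\nu$ can be made arbitrarily fast by keeping more terms). Since $v_1-v=2m$ is an even integer, $\cos(u-v_1\pi/2-\pi/4)=(-1)^m\cos(u-v\pi/2-\pi/4)$, so the leading asymptotics of $J_{v_1}$ and $(-1)^m J_v$ cancel exactly and
\[
J_{v_1}(u)-(-1)^m J_v(u)=R_{v_1}(u)-(-1)^m R_v(u).
\]
Consequently $H_{v_1}(\rho,t)=(-1)^m H_v(\rho,t)+\tilde H(\rho,t)$, where $\tilde H$ has integrand $e^{itr^a}f_1(r)\,r^{1/2}[R_{v_1}(r\rho)-(-1)^m R_v(r\rho)]$.

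To control the error, the bound $|e^{itr^a}|=1$ gives
\[
\sup_{t\in E}|\tilde H(\rho,t)|\le\int_0^\infty|f_1(r)|\,r^{1/2}\bigl(|R_v(r\rho)|+|R_{v_1}(r\rho)|\bigr)\,dr,
\]
and each of the two positive integral operators with kernel $r^{1/2}|R_\nu(r\rho)|$ is bounded from $L^2(dr)$ to $L^2(\rho\,d\rho)$ by Schur's test: with weights $w(r)=r^{-3/4}$, $W(\rho)=\rho^{-3/4}$, the substitution $u=r\rho$ reduces both Schur integrals to $\int_0^\infty u^{-1/4}|R_\nu(u)|\,du$, finite thanks to the decay of $R_\nu$ and giving a constant $C_\nu$ which we relabel $C_{n,k}$ or $C_{n_1,k_1}$. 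The reverse triangle inequality in $L^2(\rho\,d\rho)$ then yields
\[
\bigl|\,\|H_{v_1}^*\|_{L^2(\rho d\rho)}-\|H_v^*\|_{L^2(\rho d\rho)}\,\bigr|\le\|\sup_{t\in E}|\tilde H|\|_{L^2(\rho d\rho)}\le (C_{n,k}+C_{n_1,k_1})\|f_1\|_{L^2(\R^+)},
\]
which combined with the reduction in the first paragraph is exactly the claimed inequality.

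The main obstacle is the Schur-test step: one has to verify that the contributions from $R_v$ and $R_{v_1}$ split cleanly, each producing a constant depending only on its own order, so that the final bound factors as $C_{n,k}+C_{n_1,k_1}$. This is where the explicit $\nu$-dependence of the constants in the Bessel asymptotic expansion enters, since it is the only place the parameters $(n,k)$ and $(n_1,k_1)$ still appear after the Bochner reduction; the remaining manipulations are algebraic.
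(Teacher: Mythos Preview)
Your argument is correct. The Bochner reduction to the Hankel-type integral $H_v$ is exactly the content of Proposition~2 and Corollary~3 in the paper, your Bessel asymptotic is the same one used in Proposition~3, and your Schur test is equivalent to the paper's Lemma~8 (indeed Lemma~8 is just a Schur test in disguise after the change of variables $s\mapsto 1/s$). The splitting of the constant as $C_{n,k}+C_{n_1,k_1}$ is automatic, since $\tilde H$ is a sum of two terms each involving only one remainder $R_\nu$.

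Where you and the paper differ is in how the two quantities are compared. You stay at the level of the Hankel integrals $H_v$, $H_{v_1}$ and use directly that when $v_1-v$ is an even integer the leading asymptotics of $J_v$ and $J_{v_1}$ agree up to the sign $(-1)^m$, so that $H_{v_1}=(-1)^mH_v+\tilde H$ with a controllable $\tilde H$. The paper instead routes both sides through a \emph{one-dimensional Schr\"odinger maximal function}: it builds a single function $f\in L^2(\R)$ whose Fourier transform on $\R_-$ is the reflection of $f_1$ twisted by $(\overline\gamma(v))^2$, observes that the congruence $2v\equiv 2v_1\pmod 4$ is exactly the condition $(\overline\gamma(v))^2=(\overline\gamma(v_1))^2$, so that the \emph{same} $f$ satisfies the symmetry hypothesis of Proposition~1 for both indices, and then subtracts the two resulting identities $\alpha_n\|S_E^{*(n)}f_P\|=\alpha_1\|S_E^{*(1)}f\|_{L^2(\R_+)}+R(f,v)$ and its $v_1$-analogue to cancel the common $S_E^{*(1)}$ term. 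Your route is more self-contained for this particular statement; the paper's detour through Proposition~1 is the machinery it reuses for Theorems~6 and~8 and Lemmas~6--7, where the one-dimensional $S_E^{*(1)}f$ is genuinely the object of interest.
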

{\em Remark.} In the special case $n=1$ we have $k=0,1$ and use  a special definition of $f_P$. 
When $k=0$ the even function $f_e=f_P$ is defined by\[
   \hat f_e(\xi)=\left\{\begin{array}{ll}\frac1{\sqrt2}f_1(\xi)\mbox{ for }\xi\ge0,\\
   						\frac1{\sqrt2}f_1(-\xi)\mbox{ for }\xi<0,\end{array}\right.	
   \]
   and when  $k=1$ the odd function $f_o=f_P$ is defined by\[
   \hat f_o(\xi)=\left\{\begin{array}{ll}\hspace{.3cm}\frac1{\sqrt2}f_1(\xi)\mbox{ for }\xi\ge0,\\
   						-\frac1{\sqrt2}f_1(-\xi)\mbox{ for }\xi<0.\end{array}\right.	
\]
\par
Estimates in the opposite  direction of \eqref{eq:SEn_left}   in Theorem 6 are somewhat more complicated.\\
\par
We will  state the key estimate in this section. First let us define the complex unit vectors\[
\gamma(v)=\e^{-i[\frac{\pi v}2 +\frac\pi4]}.
\]
We have the following
\begin{proposition}
Let $f$ be a function in $L^2(\R)$  whose Fourier transform $\hat f$ is equal to a function in $\mathscr{S}(\R)$ 
 on $(-\infty,-1]$ and on $[1,\infty)$\\ 
Let $n\ge1,k\ge0, v=n/2+k-1$ and $\gamma(v)$ as above. Assume that $\hat f$ satisfies the symmetry \[
 \gamma(v) \hat f(-r)=\overline{\gamma}(v)\hat f(r),\mbox{ for all }r>0.
\]
Let $P$ be a solid spherical harmonic
on  $\R^n$ of degree $k$  normalised so that $\|P\|_{L^2(\S^{n-1})}=1$. Let $f_P$ be the symmetric function on $\R^n$ defined by its
Fourier transform \[
\hat f_P(\xi)=P(\xi')\hat f(|\xi|)|\xi|^{1/2-n/2}  \mbox{ for } \xi=\xi'|\xi|\in\R^n.
\]
Let  $E$ be any  subset of the interval $[0,])$ containing $0$ . 
Then there is a constant $C_v$   dependent only on $v$  but independent of $a>0, E$ and $f$ such that\[
\alpha_n\|S_E^{*(n)}f_P\|_{L^2(\R^n)} =\alpha_1 \left(\int_0^\infty |S_E^{*(1)}\ f(x)|^2\,dx\right)^{1/2} +R(f,v)
\]
where the restterm \[
|R(f,v)|\le C_v\|f\|_{L^2(\R)}.
\]
The restterm $R(f,v)$ depend on the parameter $a>0$ in the definition of $S_t$ and the set $E$.
\end{proposition}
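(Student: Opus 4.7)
The strategy is to expand $S_tf_P$ in polar coordinates, apply the Funk--Hecke identity, and use the large-argument asymptotic of the Bessel function $J_v$ of order $v=k+n/2-1$. Writing $\xi=r\omega$ with $\omega\in S^{n-1}$ and using
\[
\int_{S^{n-1}}\e^{ir|x|x'\cdot\omega}P(\omega)\,d\sigma(\omega)=(2\pi)^{n/2}i^kP(x')(r|x|)^{1-n/2}J_v(r|x|),
\]
one obtains
\[
S_tf_P(x)=(2\pi)^{-n/2}i^kP(x')|x|^{1-n/2}\int_0^\infty \e^{itr^a}\hat f(r)\,r^{1/2}J_v(r|x|)\,dr,
\]
the exponent $1/2$ arising when the Jacobian $r^{n-1}$ combines with $|\xi|^{1/2-n/2}$ and the $(r|x|)^{1-n/2}$ factor from Funk--Hecke.

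Next, split the $r$-integral at $r|x|=1$. On $\{r|x|\ge 1\}$ insert the Bessel asymptotic (recalled in the proof of Lemma~2)
\[
J_v(\rho)=\frac{1}{\sqrt{2\pi\rho}}\bigl(\gamma(v)\e^{i\rho}+\overline{\gamma(v)}\e^{-i\rho}\bigr)+Q_v(\rho),\qquad|Q_v(\rho)|\lesssim\rho^{-3/2}.
\]
Meanwhile, splitting the one-dimensional Fourier integral at $\xi=0$ and applying the symmetry hypothesis $\gamma(v)\hat f(-\xi)=\overline{\gamma(v)}\hat f(\xi)$ yields
\[
2\pi\gamma(v)S_t^{(1)}f(|x|)=\int_0^\infty\e^{itr^a}\hat f(r)\bigl(\gamma(v)\e^{ir|x|}+\overline{\gamma(v)}\e^{-ir|x|}\bigr)\,dr,
\]
so modulo errors the leading piece of $S_tf_P(x)$ equals $(2\pi)^{(1-n)/2}i^k\gamma(v)P(x')|x|^{1/2-n/2}S_t^{(1)}f(|x|)$. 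Taking the absolute value (the unimodular $\gamma(v)$ drops out), applying $\sup_{t\in E}$, and integrating in polar coordinates on $\R^n$, the factor $|x|^{1-n}$ cancels the radial Jacobian $|x|^{n-1}$, the normalization $\|P\|_{L^2(S^{n-1})}=1$ integrates the angular part to $1$, and the constants collapse to the ratio $\alpha_1/\alpha_n$ asserted in the proposition.

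The main obstacle is the error bound $|R(f,v)|\le C_v\|f\|_{L^2(\R)}$. The error has three sources: the Bessel remainder $Q_v$ on $\{r|x|\ge 1\}$; the low-argument contribution from $\{r|x|<1\}$, which I would control through $|J_v(\rho)|\le C_v\rho^v$; and the discrepancy between the truncated integral and the full $1$-D integral appearing on the right. Since $|\e^{itr^a}|=1$, the supremum in $t$ can be placed inside the $r$-integral, and Cauchy--Schwarz applied with $\hat f\in L^2(\R)$ together with its Schwartz decay on $\{|r|\ge 1\}$ gives pointwise bounds for each piece. The delicate case is large $|x|$: the bound from $|Q_v(\rho)|\lesssim\rho^{-3/2}$ alone is borderline non-$L^2(\R^n)$, so one must keep further terms $a_j\rho^{-j-1/2}\e^{\pm i\rho}$ of the Bessel expansion, as in the proof of Lemma~2, to upgrade the decay and close the $L^2$ estimate. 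The hypothesis $0\in E$ enters when one compares the two sides at $t=0$, where $S_0^{(n)}f_P=f_P$ and $S_0^{(1)}f=f$; this pins down the low-frequency constants and allows the residual discrepancy to be absorbed into $\|f\|_{L^2(\R)}$.
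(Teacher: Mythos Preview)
Your overall architecture---Funk--Hecke to reduce to a radial integral with $J_v$, then the large-argument Bessel asymptotic to produce the one-dimensional $S_t^{(1)}f$ as main term---is exactly the paper's route (packaged there as Proposition~2, Corollary~3, Proposition~3). The identification of the main term and the cancellation of the radial Jacobian are correct.

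The gap is in your error estimate. Keeping further terms $a_j\rho^{-j-1/2}\e^{\pm i\rho}$ of the Bessel expansion does \emph{not} help: once you take $\sup_{t\in E}$ and pass the absolute value inside, those terms have the same modulus $\rho^{-j-1/2}$ as before, and the Cauchy--Schwarz bound you describe still lands on $\int_0^\infty |x|^{-1}\,d|x|$---exactly the borderline divergence you flagged. The Schwartz decay of $\hat f$ on $|r|\ge 1$ does not rescue the small-$r$ contribution, where $\hat f$ is merely $L^2$. The paper avoids this entirely: it writes $(rs)^{1/2}J_v(rs)=\gamma_v\e^{irs}+\overline{\gamma}_v\e^{-irs}+K_v(rs)$ with the \emph{uniform} bound $|K_v(\rho)|\le C_v/(1+\rho)$ valid for all $\rho>0$ (so no split at $r|x|=1$ is needed), and then observes that an operator with kernel $K(rs)$ is bounded on $L^2(\R_+)$ whenever $\int_0^\infty K(\rho)\rho^{-1/2}\,d\rho<\infty$ (their Lemma~8, a one-line Schur-type test via the substitution $h(s)=f(1/s)/s$). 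Since $\int_0^\infty (1+\rho)^{-1}\rho^{-1/2}\,d\rho<\infty$, the remainder is controlled by $C_v\|f_1\|_{L^2(\R_+)}$ with no appeal to more Bessel terms or to the Schwartz hypothesis. Your invocation of $0\in E$ to ``pin down low-frequency constants'' is not needed and not used in the paper's argument.
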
 
Proposition 1 follows directly from Proposition 2 with Corollary 3  and Proposition 3 which are stated and proved at the end of this section. 
\par
Using Proposition 1 it is easy to prove Theorems 6 and 7
\begin{proof}[Proofs of Theorem 6 and Theorem 7]
When  $2v_1= 2v\mod(4)$ then $\gamma(v_1)=\pm\gamma{v}$. Define the function $f$ by its Fourier transform
\begin{align}c
\label{eq:f1-extended}
\hat f(\xi)=\left\{\begin{array}{l}f_1(\xi), \xi>0,\\
                                     ( \overline{\gamma}(v))^2f_1(-\xi)=(\overline{\gamma}(v_1))^2f_1(-\xi), \xi<0.
                                        \end{array}               \right.
\end{align}
By Proposition 1
\[
\alpha_n\|S_E^{*(n)}f_P\|_{L^2(\R^n)} =\alpha_1 \left(\int_0^\infty |S_E^{*(1)}\ f(x)|^2\,dx\right)^{1/2} +R(f,v)
\]
\[
\alpha_n\|S_E^{*(n_1)}f_P\|_{L^2(\R^{n_1})} =\alpha_1 \left(\int_0^\infty |S_E^{*(1)}\ f(x)|^2\,dx\right)^{1/2} +R(f,v_1)
\]
 we conclude that
 \begin{align}
\left|\alpha_n\|S_E^{*(n)}f_P\|_{L^2(\R^n)} -\alpha_n\|S_E^{*(n_1)}f_P\|_{L^2(\R^{n_1})}\right|=|R(f,v)-R(f,v_1)|\\
\le|R(f,v)|+|R(f,v_1)|\le (C_v+C_{v_1})\|f_1\|_{L^2(\R+)}.
\end{align}
In the special case $v=v_1$ we get \[
\alpha_n\|S_E^{*(n)}f_P\|_{L^2(\R^n)} =\alpha_n\|S_E^{*(n_1)}f_P\|_{L^2(\R^{n_1)}}.
\]
This completes the proof of Theorem 7.\\
For the proof of Theorem 6 we let $f$  be defined by \eqref{eq:f1-extended}. We will use following property of the Fourier transform on $L^2(\R^n)$ 
Let  the operator $g \to g_-$ is defined by $g_-(x)=g(-x)$. This operator commutes with the Fourier transform and also with the inverse Fourier
tranform. We have
\begin{align}
(g_-)\hat{}=(\hat g)_-= \alpha_n\check g\mbox{ and }(g_-)\check{}=(\check g)_-= (\alpha_n)^{-1}\hat g.
\end{align}
From this we get  \[ 
|S_t^{(1)}f(x)|^2=| S_t^{(1)}\check f_1(x)+\overline{\gamma}^2S_t^{(1)}\check f_1(-x)|^2+ \le 2(|S_t^{(1)}\check f_1(x)|^2 +|S_t^{(1)}\check f(-x)|^2) \mbox{ for } x>0
\] 
and hence\[
|S_E^{*(1)}f(x)|^2 \le 2(|S_E^{*(1)}\check f_1(x)|^2+ |S_E^{*(1)}\check f_1(-x)|^2) \mbox{ for } x>0.
\]
Integrating over the positive interval we get \[
\int_0^\infty |S_E^{*(1)}\ f(x)|^2\,dx\le2 \|S_E^{*(1)}\check f_1\|^2_{L^2(\R)} .
\]
From this and Proposition 1 we get the desired estimate in Theorem 6
\end{proof}
Now we consider estimates in the opposite direction of the estimate in Theorem 6. First a lemma which
follows directly from Proposition 1
\begin{lemma}
Let $f$ be a function  in $L^2(\R)$  equal to a function in $\mathscr{S}(\R)$ on $(-\infty,-1]$ and on $[1,\infty)$  .
Let $\hat f$ be the Fourier  transform of $f$. 
Let $f_P$ be the symmetric function on $\R^n$ defined by its
Fourier transform \[
\hat f_P(\xi)=P(\xi')\hat f(|\xi|)|\xi|^{1/2-n/2}  \mbox{ for } \xi=\xi'|\xi|\in\R^n.
\]
If $\hat f$  satisfy the symmetry $\gamma(v) \hat f(-r)=\overline{\gamma}(v)\hat f(r)$ for all $r>0$, then 
 \[
\alpha_1\left(\int^\infty_0|S_E^{*(1)} f(x)|^2\,dx\right)^{1/2}\le   \alpha_n \|S_E^{*(n)}f_P\|_{L^2(\R^n)} + C_{n,k}\|f\|^2_{L^2(\R)}. 
\] 
\end{lemma}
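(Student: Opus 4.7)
The plan is to deduce the lemma as a direct rearrangement of Proposition 1. That proposition furnishes, under exactly the symmetry hypothesis $\gamma(v)\hat f(-r)=\overline{\gamma}(v)\hat f(r)$ and the Schwartz regularity outside $[-1,1]$, the identity
\[
\alpha_n\|S_E^{*(n)}f_P\|_{L^2(\R^n)} = \alpha_1\Bigl(\int_0^\infty |S_E^{*(1)} f(x)|^2\,dx\Bigr)^{1/2} + R(f,v),
\]
with $|R(f,v)|\le C_v\|f\|_{L^2(\R)}$. Solving for the one-dimensional integral and applying the triangle inequality yields
\[
\alpha_1\Bigl(\int_0^\infty|S_E^{*(1)} f(x)|^2\,dx\Bigr)^{1/2} \le \alpha_n\|S_E^{*(n)}f_P\|_{L^2(\R^n)} + |R(f,v)|.
\]
Since $v=n/2+k-1$ is determined by $(n,k)$, one may rename $C_v$ as $C_{n,k}$, and the bound on $|R(f,v)|$ produces the stated inequality.

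The one point that requires a little care is the mismatch of hypotheses: Proposition 1 is stated for $E$ containing $0$, while the lemma allows any $E\subset[0,1]$. To reconcile this, I would pass from $E$ to $E'=E\cup\{0\}$. The left-hand side of the lemma is monotone in $E$, so the integral over $E$ is dominated by the one over $E'$, to which Proposition 1 applies. On the right-hand side one has the pointwise inequality
\[
S_{E'}^{*(n)}f_P \le S_E^{*(n)}f_P + |f_P|,
\]
and a Plancherel computation in polar coordinates, using $\int_{S^{n-1}}|P|^2\,d\sigma=1$ and the symmetry $|\hat f(-r)|=|\hat f(r)|$, gives
\[
\alpha_n^2\|f_P\|_{L^2(\R^n)}^2 = \int_0^\infty|\hat f(r)|^2\,dr \le \tfrac12\|\hat f\|_{L^2(\R)}^2 \lesssim \|f\|_{L^2(\R)}^2.
\]
Hence the passage from $E'$ back to $E$ costs only an additional $C_{n,k}\|f\|_{L^2(\R)}$, which is absorbed into the constant.

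I do not anticipate a genuine obstacle: the entire substantive content is packaged in Proposition 1, and the work here is purely cosmetic rearrangement plus the small $E \leftrightarrow E\cup\{0\}$ adjustment. If anything, the most delicate step is checking that the $\|f\|_{L^2(\R)}$ on the right of Proposition 1 (linear, not squared as literally written in the lemma statement) is the intended form; I would present the conclusion with the linear dependence, which is what the argument produces, and flag the squared exponent as a typographical slip.
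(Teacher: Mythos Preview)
Your approach is exactly the paper's: rearrange Proposition 1 and bound $|R(f,v)|$ by $C_v\|f\|_{L^2}$, renaming $C_v$ as $C_{n,k}$. The paper's proof is a two-line application of Proposition 1 and does not address the $0\in E$ issue or the squared-norm typo; your additional handling of $E\mapsto E\cup\{0\}$ and your flagging of $\|f\|_{L^2(\R)}^2$ as a typographical slip for $\|f\|_{L^2(\R)}$ are both correct and more careful than the original.
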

\begin{proof}[Proof of Lemma 6]
From Proposition 1 we get    \[
 \alpha_1 \left(\int_0^\infty |S_E^{*(1)}\ f(x)|^2\,dx\right)^{1/2}=\alpha_n\|S_E^{*(n)}f_P\|_{L^2(\R^n)} -R(f,v)
\]
with the restterm satisfying  $|-R(f,v )|\le C_v\|f\|_{L^2(R_+)}$ . This completes the proof of the lemma.
\end{proof}
\par
Next we will see how we can combine several estimates like those in Lemma 7. We have
\begin{lemma}
Let $f$ be a function  in $L^2(\R)$  equal to a function in $\mathscr{S}(\R)$ on $(-\infty,-1]$ and on $[1,\infty)$  .
Let $\hat f$ be the Fourier  transform of $f$. 
Let $n\ge1,k\ge0$ and $n_1\ge1,k_1\ge0$  and let $P$ be a solid spherical harmonic
on $\R^n$ of degree $k$ and and  $Q$ be a solid spherical harmonic
on $\R^{n_1}$ of degree $k_1$ 
and  normalised so that $\|P\|_{L^2(\S^{n-1})}=\|Q\|_{L^2(\S^{n_1-1})}=1$.\\
 Let $ \boldsymbol{f}_P$ be the symmetric function on $\R^n$ defined by its
Fourier transform \[
\boldsymbol{\hat f}_P(\xi)=P(\xi')\left(-\overline{\gamma}(v_1)\hat f(|\xi|)+\gamma(v_1)\hat f(-|\xi|)\right)|\xi|^{1/2-n/2}  \mbox{ for } \xi=\xi'|\xi|\in\R^n,
\]
and let $\boldsymbol {f}_Q$ be the symmetric function on $\R^{n_1}$ defined by its
Fourier transform \[
\boldsymbol{\hat f}_Q(\xi)=Q(\xi')\left(\overline{\gamma(v)}\hat f(|\xi|)-\gamma(v)\hat f(-|\xi|)\right)|\xi|^{1/2-n_1/2}  \mbox{ for } \xi=\xi'|\xi|\in\R^{n_1},
\] 
where  $v=k+n/2-1$ and $v_1=k_1+n_1/2-1$ and assume that $2v_1\ne 2v\mod(4)$, i.e.
 $\gamma(v_1)\ne \gamma(v)$\\
Let $E$ be any subset of the interval $[0,1)$. 
Then there is a constant $C_{n,k}$   and a constant $C_{n_1,k_1}$ not dependent of $a>0, E$ and $f$ such that
\begin{align}
c\alpha_1\left(\int_0^\infty|S_E^{*(1)}f(x)|^2\,dx\right)^{1/2}&\le \alpha_n \|S_E^{*(n)}\boldsymbol{f}_P\|_{L^2(\R^n)} +
 \alpha_{n_1}\|S_E^{*(n_1)}\boldsymbol{f}_Q\|_{L^2(\R^{n_1})}\\
&+(C_{n,k}+ C_{n_1,k_1})\|f\|_{L^2(\R)} .
\end{align}
where $c=2|\sin\frac{\pi(v-v_1)}{2}|\in\{2,\sqrt2\}$.
\end{lemma}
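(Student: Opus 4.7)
The plan is to apply Lemma 6 to two cleverly chosen one-dimensional auxiliary functions $g$ and $h$, each satisfying exactly one of the two symmetries needed for Proposition 1, and then combine the resulting inequalities via a linear-algebra identity that recovers $f$ up to the factor $c$.

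First I would introduce $g\in L^{2}(\R)$ by setting
\[
\hat g(r)=-\overline{\gamma}(v_{1})\hat f(r)+\gamma(v_{1})\hat f(-r),\qquad r>0,
\]
and extending to $r<0$ by forcing the symmetry
$\gamma(v)\hat g(-r)=\overline{\gamma}(v)\hat g(r)$, i.e.
$\hat g(-r)=\overline{\gamma}(v)^{2}\hat g(r)$. Analogously, define $h\in L^{2}(\R)$ by
\[
\hat h(r)=\overline{\gamma}(v)\hat f(r)-\gamma(v)\hat f(-r),\qquad r>0,
\]
and $\hat h(-r)=\overline{\gamma}(v_{1})^{2}\hat h(r)$. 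The whole point is that the radial profile of $\boldsymbol{\hat f}_{P}$ agrees with $\hat g$ on $(0,\infty)$, and the radial profile of $\boldsymbol{\hat f}_{Q}$ agrees with $\hat h$ on $(0,\infty)$. Thus $\boldsymbol{f}_{P}=g_{P}$ and $\boldsymbol{f}_{Q}=h_{Q}$ in the sense of Proposition~1, and both $g,h$ satisfy the hypothesis on $\hat{\,\cdot\,}$ being Schwartz outside $[-1,1]$. A routine check using $|\gamma(v)|=1$ yields $\|g\|_{L^{2}(\R)}+\|h\|_{L^{2}(\R)}\lesssim \|f\|_{L^{2}(\R)}$.

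The central algebraic identity is
\[
\gamma(v)\,\hat g(\xi)+\gamma(v_{1})\,\hat h(\xi)=2i\sin\tfrac{\pi(v-v_{1})}{2}\,\hat f(\xi),\qquad \xi\in\R.
\]
For $\xi=r>0$ this is a direct two-by-two computation using
$\gamma(v)\overline{\gamma}(v_{1})-\gamma(v_{1})\overline{\gamma}(v)=-2i\sin\tfrac{\pi(v-v_{1})}{2}$;
the coefficient of $\hat f(-r)$ cancels since $\gamma(v)\gamma(v_{1})-\gamma(v_{1})\gamma(v)=0$. For $\xi=-r<0$ one unfolds using the imposed symmetries, $\hat g(-r)=\overline{\gamma}(v)^{2}\hat g(r)$ and $\hat h(-r)=\overline{\gamma}(v_{1})^{2}\hat h(r)$, and obtains exactly $2i\sin\tfrac{\pi(v-v_{1})}{2}\hat f(-r)$ by the same cancellation. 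Since $S_{t}^{(1)}$ is a Fourier multiplier, this identity transfers immediately to
\[
c\,|S_{t}^{(1)}f(x)|\le |S_{t}^{(1)}g(x)|+|S_{t}^{(1)}h(x)|,\qquad c=2\left|\sin\tfrac{\pi(v-v_{1})}{2}\right|,
\]
and therefore $c\,|S_{E}^{*(1)}f(x)|\le |S_{E}^{*(1)}g(x)|+|S_{E}^{*(1)}h(x)|$ pointwise.

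Restricting to $x>0$, taking $L^{2}$-norms, and applying Minkowski gives
\[
c\,\alpha_{1}\Bigl(\int_{0}^{\infty}|S_{E}^{*(1)}f|^{2}\,dx\Bigr)^{1/2}\le \alpha_{1}\Bigl(\int_{0}^{\infty}|S_{E}^{*(1)}g|^{2}\,dx\Bigr)^{1/2}+\alpha_{1}\Bigl(\int_{0}^{\infty}|S_{E}^{*(1)}h|^{2}\,dx\Bigr)^{1/2}.
\]
Now apply Lemma~6 to $g$ with the pair $(P,n,k,v)$ and to $h$ with $(Q,n_{1},k_{1},v_{1})$; this is legitimate precisely because of the symmetries built into $\hat g$ and $\hat h$. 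Adding the two estimates and absorbing $\|g\|_{L^{2}}+\|h\|_{L^{2}}\lesssim \|f\|_{L^{2}}$ into the constants produces the claimed bound. The main obstacle is the verification of the key identity on the negative half-line: one must keep track of the two different symmetry phases and check that they combine to exactly cancel the $\hat f(r)$ term while reproducing $2i\sin\tfrac{\pi(v-v_{1})}{2}\hat f(-r)$; the hypothesis $2v_{1}\not\equiv 2v\pmod 4$ is what guarantees $c\neq 0$, so that the whole scheme is non-degenerate.
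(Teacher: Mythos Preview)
Your proposal is correct and follows essentially the same route as the paper. Both arguments introduce the two auxiliary one-dimensional profiles (your $\hat g,\hat h$; the paper's $\boldsymbol{f}_{P,2},\boldsymbol{f}_{Q,2}$, which differ from yours only by the unimodular factors $\gamma(v),\gamma(v_1)$), verify the same linear identity $\gamma(v)\hat g+\gamma(v_1)\hat h=(\gamma(v_1)\overline{\gamma}(v)-\gamma(v)\overline{\gamma}(v_1))\hat f$ on all of $\R$, and then feed each piece into Lemma~6 (equivalently Proposition~1) using the imposed symmetries.
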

As a special case of Lemma 6 we have
\begin{corollary} 
Assume $f$ satisfy the symmetry
\begin{align}
\label{eq:symmetry}
\hat f(\xi)=\left\{\begin{array}{ll}c_1g(\xi) &\mbox{ for } \xi>0,\\
                          c_2 g(-\xi) &\mbox{ for } \xi<0,
                          \end{array}\right.\mbox{ with } (c_1,c_2)\in \C^2, (c_1,c_2)\ne0,
\end{align}
and set $c=2|\sin\frac{\pi(v-v_1)}{2}|\in\{2,\sqrt2\}$,  $c'_1=c_1\overline{\gamma(v_1)}-c_2\gamma(v_1)$ and $c'_2=c_1\overline{\gamma(v)}-c_2\gamma(v)$.\\
 Then
\begin{align}
c\alpha_1\left(\int^\infty_0|S_E^{*(1)} f(x)|^2\,dx\right)^{1/2}\le  & |c'_1|\alpha_n\|S_E^{*(n)}f_P\|_{L^2(\R^n)}+ |c'_2|\alpha_{n_1} \|S_E^{*(n_1)}f_Q\|_{L^2(\R^{n_1})}\\ 
&+ (C_{n,k}+C_{n_1,k_1})\|f\|_{L^2(\R)} 
\end{align}
where
 \begin{align}
\hat f_P(\xi)=P(\xi')g(|\xi|) |\xi|^{1/2-n/2}   \mbox{ for } \xi=\xi'|\xi|\in\R^n,
\end{align}
and 
\begin{align}
\hat f_Q(\xi)=Q(\xi')g(|\xi|)|\xi|^{1/2-n/2}   \mbox{ for } \xi=\xi'|x|\in\R^{n_1}.
\end{align}
\end{corollary}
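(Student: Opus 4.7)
The plan is to deduce Corollary 3 as a direct specialization of the preceding lemma to the symmetric data described by \eqref{eq:symmetry}, using only linearity of the Schr\"odinger operator.

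First I would compute the functions $\boldsymbol{f}_P$ and $\boldsymbol{f}_Q$ from the preceding lemma in the case when $\hat f$ satisfies \eqref{eq:symmetry}. For any $r>0$ we then have $\hat f(r)=c_1 g(r)$ and $\hat f(-r)=c_2 g(r)$. Substituting this into the defining formulas for $\boldsymbol{\hat f}_P$ and $\boldsymbol{\hat f}_Q$, a direct calculation gives
\[
-\overline{\gamma(v_1)}\hat f(|\xi|)+\gamma(v_1)\hat f(-|\xi|)=-\bigl(c_1\overline{\gamma(v_1)}-c_2\gamma(v_1)\bigr)g(|\xi|)=-c_1'\,g(|\xi|),
\]
and likewise
\[
\overline{\gamma(v)}\hat f(|\xi|)-\gamma(v)\hat f(-|\xi|)=\bigl(c_1\overline{\gamma(v)}-c_2\gamma(v)\bigr)g(|\xi|)=c_2'\,g(|\xi|).
\]
Comparing with the definitions of $\hat f_P$ and $\hat f_Q$ in the statement of the corollary, this yields the pointwise identities $\boldsymbol{\hat f}_P=-c_1'\,\hat f_P$ on $\R^n$ and $\boldsymbol{\hat f}_Q=c_2'\,\hat f_Q$ on $\R^{n_1}$, hence $\boldsymbol{f}_P=-c_1'f_P$ and $\boldsymbol{f}_Q=c_2'f_Q$ as $L^2$ functions.

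Next I would use the fact that the Schr\"odinger operator $S_t^{(k)}$ is linear in its initial datum, so that $S_t^{(k)}(\lambda h)=\lambda S_t^{(k)} h$ for every $\lambda\in\C$. Taking absolute values and then the supremum over $t\in E$ gives $S_E^{*(k)}(\lambda h)=|\lambda|\,S_E^{*(k)} h$, so that
\[
\|S_E^{*(n)}\boldsymbol{f}_P\|_{L^2(\R^n)}=|c_1'|\,\|S_E^{*(n)}f_P\|_{L^2(\R^n)},
\]
\[
\|S_E^{*(n_1)}\boldsymbol{f}_Q\|_{L^2(\R^{n_1})}=|c_2'|\,\|S_E^{*(n_1)}f_Q\|_{L^2(\R^{n_1})}.
\]
Inserting these two identities into the inequality supplied by the preceding lemma applied to $f$ yields the asserted bound.

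There is no genuine obstacle: the entire argument is algebraic bookkeeping, reducing to the identities above together with the scalar homogeneity of the maximal operators. The only place where care is required is in tracking the signs inside the definitions of $\boldsymbol{\hat f}_P$, $\boldsymbol{\hat f}_Q$, $c_1'$ and $c_2'$, so that the $-c_1'$ and $+c_2'$ factors are absorbed correctly as $|c_1'|$ and $|c_2'|$ after taking $L^2$ norms.
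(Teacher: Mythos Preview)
Your proposal is correct and follows essentially the same route as the paper: you substitute the symmetric form of $\hat f$ into the definitions of $\boldsymbol{\hat f}_P$ and $\boldsymbol{\hat f}_Q$ from Lemma~7, recognize them as scalar multiples of $\hat f_P$ and $\hat f_Q$, and then invoke Lemma~7 together with the scalar homogeneity of the maximal operator. Your sign bookkeeping is in fact slightly more careful than the paper's own write-up, but since only $|c_1'|$ and $|c_2'|$ enter the final inequality this makes no difference.
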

We are mostly interested in the cases when $(c_1,c_2)$ is $(1,1)$, $(1,-1)$, $(1,0)$ or $(0,1)$ which corresponds to even or odd functions
or functions with Fourier transform supported on the positive or negative half-axis.
 \begin{proof}[Proof of Corollary 1]
 The functions $\boldsymbol{f}_P$ and $\boldsymbol{f}_Q$ defined in the Lemma 7 have Fourier transforms
\begin{align}
\label{eq:f_P}
 \boldsymbol{\hat f}_P(\xi)=c_1'P(\xi')g(|\xi|) |\xi|^{1/2-n/2}   \mbox{ for } \xi=\xi'|\xi|\in\R^n,
\end{align}
and 
\begin{align}
\label{eq:f_Q}
 \boldsymbol{\hat f}_Q(\xi)=c'_2Q(\xi')g(|\xi|)|\xi|^{1/2-n/2}   \mbox{ for } \xi=\xi'|x|\in\R^{n_1},
\end{align}
for   $c'_1=c_1\overline{\gamma(v_1)}-c_2\gamma(v_1)$  and $c'_2=c_1\overline{\gamma(v)}-c_2\gamma(v)$. The desired estimates
follows directly from Lemma 7.
 \end{proof}
\par
 \begin{proof}[Proof of Lemma 7] We will essentially only do elementary calculation in $\C^2$,  and use the triangle inequality
for $L^2$ norms. Some details are left to the reader.\\
  Set\[
 \boldsymbol{f}_{P,1}(r)=-\overline{\gamma}(v_1)\hat f(r)+\gamma(v_1)\hat f(-r) \mbox{ for }r>0,
 \]
 and\[
 \boldsymbol{f}_{Q,1}(r)=\overline{\gamma}(v)\hat f(r)-\gamma(v)\hat f(-r)\mbox{ for }r>0.
 \]
 Then
\begin{align}
\| \sup\limits_{t\in E}\tilde S_t\boldsymbol{f}_{P,1}\|_{L^2(\R_+)}=\alpha_n\|S_E^{*(n)}\boldsymbol{f}_P\|_{L^2(\R^n)}
\end{align}
 and
 \begin{align}
\| \sup\limits_{t\in E}\tilde S_t\boldsymbol{f}_{Q,1}\|_{L^2(\R_+)}=\alpha_{n_1}\|S_E^{*(n_1)}\boldsymbol{f}_Q\|_{L^2(\R^{n_1})},
\end{align}
where $\tilde S_t$ is define as in \eqref{eq:defStilde}.
Let the functions $f_{P,2}$ and $f_{Q,2}$ on $\R$ be defined by \[
\boldsymbol{f}_{P,2}(r)=\left\{\begin{array}{ll} \gamma(v) f_{P,1}(r) &\mbox{ for }r\ge0,\\
                                          \overline{\gamma}(v) f_{P,1}(-r) &\mbox{ for }r<0,
                                          \end{array}\right.
                                          \]
   and
    \[
\boldsymbol{f}_{Q,2}(r)=\left\{\begin{array}{ll} \gamma(v_1) f_{Q,1}(r) &\mbox{ for }r\ge0,\\
                                          \overline{\gamma}(v_1) f_{Q,1}(-r) &\mbox{ for }r<0.
                                          \end{array}\right.
                                          \]
    We have
    \begin{align}
    \boldsymbol{f}_{P,2}+\boldsymbol{f}_{Q,2}&=\left\{\begin{array}{l} (-\overline{\gamma}(v_1)\gamma(v) +\overline{\gamma}(v)\gamma(v_1)) f(r)
    + (\gamma(v_1)\gamma(v)-\gamma(v)\gamma(v_1)) f(-r)\mbox{ for }r\ge0,\\
                             (-\overline{\gamma}(v_1)\overline{\gamma}(v)+ \overline{\gamma}(v)\overline{\gamma}(v_1) )f(r)
                              +(\gamma(v_1)\overline{\gamma}(v)-\gamma(v)\overline{\gamma}(v_1) )f(-r)        \mbox{ for }r<0,
                                          \end{array}\right.\\
                              &=(\gamma(v_1)\overline{\gamma}(v)-\gamma(v)\overline{\gamma}(v_1) )\hat f(r) \mbox{ for } r\in\R.           
                                              \end{align}
   
  By the assumption  $2v_1\ne 2v\mod(4)$,   we have   
  \begin{align}
  |\gamma(v_1)\overline{\gamma}(v)-\gamma(v)\overline{\gamma}(v_1)| 
  =|\e^{-i[\frac{\pi v_1}2 +\frac\pi4]}\e^{i[\frac{\pi v}2 +\frac\pi4]}-\e^{-i[\frac{\pi v}2 +\frac\pi4]}\e^{i[\frac{\pi v_1}2 +\frac\pi4]}|\\
  =|\e^{-i\frac\pi2(v_1-v)}-\e^{i\frac\pi2(v_1-v)}|=|2\sin(\pi(v_1-v)/2)|=c\in\{2,\sqrt2\}
  \end{align}
 Lemma 7 now follows from Lemma 6 and Proposition 3.\\
 \end{proof}
 Next we observe that  $S_t^{(1)}f(-x)=S_t^{(1)}f_-(x)$, where $\hat f_-(\xi)=\hat f(-\xi)$.  Thus  if $f$ is an even or an odd function
on $L^2(\R^n)$ then   $S_E^{*(1)} f$ is even and we obtain \[
 \| S_E^{*(1)} f \|_{L^2(\R)}=\sqrt{2}\left(\int^\infty_0|S_E^{*(1)} f(x)|^2\,dx\right)^{1/2}.
\]
We also observe that any function $f$ on $\R$ can be written as a  sum of an even and an odd function $f= f_e+f_o$.
Then also the Fourier transforms $\hat f_e$ and $\hat f_o$ are even respective  odd and we have $\hat f=\hat f_e+\hat f_o$
and that $\|f\|^2_2=\|f_e\|^2_2+\|f_o\|^2_2$.\\
Using Corollary 1 of Lemma 7 we  get the following estimates in different cases.
\begin{theorem}
Let $f$ be a function  in $L^2(\R)$  equal to a function in $\mathscr{S}(\R)$ on $(-\infty,-1]$ and on $[1,\infty)$  .
Decompose $f$ into even and odd functions $f=f_e+f_o$ with Fourier transforms $\hat f_e$  and $\hat f_o$  
Let $n\ge1,k\ge0$ and $n_1\ge1,k_1\ge0$  and let $P$ be a solid spherical harmonic
on $\R^n$ of degree $k$ and and  $Q$ be a solid spherical harmonic
on $\R^{n_1}$ of degree $k_1$ 
and  normalised so that $\|P\|_{L^2(\S^{n-1})}=\|Q\|_{L^2(\S^{n_1-1})}=1$. Define the symmetric functions
\begin{align}
\hat f_{e,P}(\xi)&=\hat f_e(|\xi|)P(\xi')|\xi|^{1/2-n/2} \mbox{ for } \xi=\xi'|\xi| \in\R^n,\\
\hat f_{e,Q}(\xi)&=\hat f_e(|\xi|)Q(\xi')|\xi|^{1/2-n_1/2} \mbox{ for } \xi=\xi'|\xi| \in\R^{n_1},\\
\hat f_{o,P}(\xi)&=\hat f_o(|\xi|)P(\xi')|\xi|^{1/2-n/2} \mbox{ for } \xi=\xi'|\xi| \in\R^n,\\
\hat f_{o,Q}(\xi)&=\hat f_o(|\xi|)Q(\xi')|\xi|^{1/2-n_1/2} \mbox{ for } \xi=\xi'|\xi| \in\R^{n_1}.
\end{align}
Let $\gamma(v)=\e^{-i[\frac{\pi v}2 +\frac\pi4]}$, $v=k+n/2-1$ and $v_1=k_1+n_1/2-1$  and
assume that $2v_1\ne 2v\mod(4)$.\\
Then we have in different cases.\\
\par
Case 1. 
Assume $\overline{\gamma}(v)/\gamma(v)=1$ and $\overline{\gamma}(v_1)/\gamma(v_1)=-1$.\\
Then
\begin{align}
\alpha_1\|S_E^{*(1)}f_e\|_{L^2(\R)} &\le   \alpha_n\sqrt2 \|S_E^{*(n)}f_{e,P}\|_{L^2(\R^n)} +C'_{n,k}\|f_e\|  ,\\
\alpha_1\|S_E^{*(1)}f_o\|_{L^2(\R)} &\le  \alpha_{n_1}\sqrt2\|S_E^{*(n_1)}f_{o,Q}\|_{L^2(\R^{n_1})} +C'_{n_1,k_1}\|f_o\|_2,
\end{align}
and
\begin{align}
\alpha_1\|S_E^{*(1)}f\|_{L^2(\R)}  \le  \alpha_n\sqrt2\|S_E^{*(n)}f_{e,P}\|_{L^2(\R^n)} &+  \alpha_{n_1}\sqrt2 \|S_E^{*(n_1)}f_{o,Q}\|_{L^2(\R^{n_1})}\\
&+(C'_{n,k}+C'_{n_1,k_1})\|f\|_{L^2(\R)}.
\end{align}
\par Case 2. 
Assume that  $\overline{\gamma}(v)/\gamma(v)=i$ and $\overline{\gamma}(v_1)/\gamma(v_1)=-i$. \\
Then 
\begin{align}
\alpha_1\|S_E^{*(1)}f_e\|_{L^2(\R)} \le   \alpha_n\|S_E^{*(n)}f_{e,P}\|_{L^2(\R^n)} + &\alpha_{n_1}\|S_E^{*({n_1})}f_{e,Q}\|_{L^2(\R^{n_1})}\\
 &+(C'_{n,k}+C'_{n_1,k_1})\|f_{e}\|_{L^2(\R)},\\  
\alpha_1\|S_E^{*(1)}f_o\|_{L^2(\R)} \le   \alpha_n \|S_E^{*({n})}f_{o,P}\|_{L^2(\R^n)} &+ \alpha_{n_1}\|S_E^{*(n_1)}f_{o,Q}\|_{L^2(\R^{n_1})}\\
&+(C'_{n,k}+C'_{n_1,k_1})\|f_o\|_{L^2(\R)},\\  
\end{align} 
and
\begin{align}
 \alpha_1\|S_E^{*(1)}f\|_{L^2(\R)} &\le  \alpha_n \|S_E^{*(n)}f_{e,P}\|_{L^2(\R^n)} +\alpha_{n_1} \|S_E^{*({n_1})}f_{e,Q}\|_{L^2(\R^{n_1})}\\
  &+\alpha_n\|S_E^{*({n})}f_{o,P}\|_{L^2(\R^n)} +\alpha_{n_1}\|S_E^{*(n_1)}f_{o,Q}\|_{L^2(\R^{n_1})}\\
  &+(C'_{n,k}+C'_{n_1,k_1})\|f\|_{L^2(\R)}.
\end{align}
\par
Case 3:
Assume  $\overline{\gamma}(v)/\gamma(v)=1$ and  $\overline{\gamma}(v_1)/\gamma(v_1)=i$.\\
Then
\begin{align}
\|S_E^{*(1)}f_e\|_{L^2(\R)} &\le   \alpha_n\sqrt2  \|S_E^{*(n)}f_{e,P}\|_{L^2(\R^n)} +C_{n,k}\|f_e\|_{L^2(\R)}  ,\\
\|S_E^{*(1)}f_o\|_{L^2(\R)} &\le  \alpha_n \sqrt2\|S_E^{*({n})}f_{o,P}\|_{L^2(\R^n)} +2\alpha_{n_1} \|S_E^{*(n_1)}f_{o,Q}\|_{L^2(\R^{n_1})}\\
&+(C'_{n,k}+C'_{n_1,k_1})\|f_o\|_{L^2(\R)}
\end{align}
and
\begin{align}
 \|S_E^{*(1)}f\|_{L^2(\R)} &\le   \alpha_n\sqrt2  \|S_E^{*(n)}f_{e,P}\|_{L^2(\R^n)}  +\alpha_n \sqrt2\|S_E^{*({n})}f_{o,P}\|_{L^2(\R^n)} \\
  &+2\alpha_{n_1}\|S_E^{*(n_1)}f_{o,Q}\|_{L^2(\R^{n_1})} +(C'_{n,k}+C'_{n_1,k_1})\|f\|_{L^2(\R)}.
\end{align}
We have similar statements with  $\overline{\gamma}(v)/\gamma(v)= -1$  and / or   $\overline{\gamma}(v_1)/\gamma(v_1)=-i$ 
\end{theorem}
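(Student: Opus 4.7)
My plan is to apply Corollary 1 of Lemma 7 twice---once to $f_e$ and once to $f_o$---and then to add the two estimates by the triangle inequality $\|S_E^{*(1)}f\|_{L^2(\R)} \le \|S_E^{*(1)}f_e\|_{L^2(\R)} + \|S_E^{*(1)}f_o\|_{L^2(\R)}$.

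First I would observe that $\hat f_e$ is even and $\hat f_o$ is odd, so the symmetry hypothesis \eqref{eq:symmetry} of Corollary 1 is satisfied by $f_e$ with $(c_1,c_2) = (1,1)$ and by $f_o$ with $(c_1,c_2) = (1,-1)$, in both cases with $g$ equal to $\hat f_e$ respectively $\hat f_o$ restricted to $[0,\infty)$; under these choices the induced symmetric functions on $\R^n$ and $\R^{n_1}$ coincide with $f_{e,P},f_{e,Q},f_{o,P},f_{o,Q}$ as defined in the statement. Next I would use the identity $\|S_E^{*(1)}h\|_{L^2(\R)} = \sqrt{2}\,\bigl(\int_0^\infty |S_E^{*(1)}h(x)|^2\,dx\bigr)^{1/2}$, valid whenever $h$ is even or odd (noted in the text preceding the theorem), to convert the half-line norm on the left of Corollary 1 into the full $L^2(\R)$ norm, at the cost of a factor $\sqrt{2}/c$.

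What remains is a short computation of the three scalars
\[
c = 2\bigl|\sin\tfrac{\pi(v-v_1)}{2}\bigr|,\qquad
|c'_1| = |c_1\overline{\gamma}(v_1) - c_2\gamma(v_1)|,\qquad
|c'_2| = |c_1\overline{\gamma}(v) - c_2\gamma(v)|,
\]
for each of the two parity choices in each of the three cases. In Case 1 the relations $\overline{\gamma}(v) = \gamma(v)$ and $\overline{\gamma}(v_1) = -\gamma(v_1)$ force $c'_2 = 0$ for $f_e$ and $c'_1 = 0$ for $f_o$, while $c = 2$ and the surviving moduli equal $2$; the factor $\sqrt{2}/c$ then produces the $\sqrt{2}$ in the stated bounds. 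In Case 2 one finds $c = 2$ and $|c'_1| = |c'_2| = \sqrt{2}$ for both parities, yielding the balanced estimates with coefficient $1$. In Case 3 one finds $c = \sqrt{2}$; for $f_e$ the term $c'_2$ again vanishes and $|c'_1| = \sqrt{2}$, whereas for $f_o$ both terms survive with $|c'_1| = \sqrt{2}$ and $|c'_2| = 2$. Combining these for $f = f_e + f_o$ gives the combined inequality in each case.

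The main obstacle is purely notational bookkeeping: one has to track carefully which of the four combinations $\overline{\gamma}(w)\pm\gamma(w)$ vanishes for $w \in \{v,v_1\}$ in each of the three cases, so that the correct terms drop out of the two applications of Corollary 1 and the correct numerical constants emerge. No analytic content beyond Corollary 1 of Lemma 7 and the even--odd reduction identity is needed.
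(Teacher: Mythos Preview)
Your proposal is correct and is exactly the route the paper takes: the text immediately preceding Theorem 8 records the even/odd identity $\|S_E^{*(1)}h\|_{L^2(\R)} = \sqrt{2}\bigl(\int_0^\infty |S_E^{*(1)}h|^2\bigr)^{1/2}$ and then states that the theorem follows ``using Corollary 1 of Lemma 7'', with no further proof given. Your case-by-case computation of $c,|c'_1|,|c'_2|$ for $(c_1,c_2)=(1,\pm1)$ is the bookkeeping the paper leaves to the reader, and your values match the stated constants (the missing $\alpha_1$ on the left in the paper's Case 3 is a typo).
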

It remains to prove Proposition 1. For this we will use Proposition 2 and Proposition 3 below.\\
First we define the operator $\tilde S_t$ for $t>0$ by
\begin{align}
\label{eq:defStilde}
\tilde S_t g(r)=\int_0^\infty J_v(rs)(rs)^{1/2}g(s)\e^{its^a}\,ds
\end{align}
for $g\in L^2(\R_+)$. 
Here $J_m$  is the Bessel function of order $m$ with integer or half-integer  $m>-1$.
Then we have
\begin{proposition}
Let $f_1$ be a function  in $L^2(\R_+)$ .
Let $P$ be a solid spherical harmonic
on  $\R^n$ of degree $k$  normalised so that $\|P\|_{L^2(\S^{n-1})}=1$.
Let $\hat f$ be the Fourier  transform of $f$ and
Let $f_P$ be the symmetric function on $\R^n$ defined by its
Fourier transform \[
\hat f_P(\xi)= f_1(|\xi|)P(\xi')|\xi|^{1/2-n/2}  \mbox{ for } \xi=\xi'|\xi|\in\R^n
\]
Let $\tilde S_t$ defined as in \eqref{eq:defStilde} with $v=n/2+k -1$. 
Then
\begin{align}
S_t^{(n)}f_P(x)=c_{k,n}\alpha_n^{-1}|x|^{1/2-n/2}\tilde S_tf_1(|x|)P(-x')\mbox{ for } x=x'|x|\in\R^n,
\end{align}
with  $|c_{k,n}|=1$.
\end{proposition}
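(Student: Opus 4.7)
The plan is a direct computation by polar coordinates and the Funk--Hecke/Bochner identity, which reduces the angular integral against a spherical harmonic to a Bessel transform. Starting from
\[
S_t^{(n)}f_P(x)=(2\pi)^{-n}\int_{\R^n}\e^{i\xi\cdot x}\e^{it|\xi|^a}f_1(|\xi|)P(\xi')|\xi|^{1/2-n/2}\,d\xi,
\]
I would introduce polar coordinates $\xi=s\xi'$ with $s=|\xi|>0$ and $\xi'\in\S^{n-1}$, picking up a Jacobian $s^{n-1}$, and separate the radial and angular variables to obtain
\[
S_t^{(n)}f_P(x)=(2\pi)^{-n}\int_0^\infty \e^{its^a}f_1(s)\,s^{n/2-1/2}\left(\int_{\S^{n-1}}\e^{is\xi'\cdot x}P(\xi')\,d\sigma(\xi')\right)ds.
\]

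The key step is to evaluate the inner spherical integral by the Funk--Hecke formula for spherical harmonics. For the standard Fourier sign this formula reads
\[
\int_{\S^{n-1}}\e^{-is\xi'\cdot x}P(\xi')\,d\sigma(\xi')=(2\pi)^{n/2}(-i)^{k}P(x')\,\frac{J_{n/2+k-1}(s|x|)}{(s|x|)^{n/2-1}},
\]
and the opposite sign follows by changing variables $\xi'\mapsto -\xi'$ and using the parity $P(-\xi')=(-1)^k P(\xi')$, which yields
\[
\int_{\S^{n-1}}\e^{is\xi'\cdot x}P(\xi')\,d\sigma(\xi')=(2\pi)^{n/2}i^{k}P(x')\,\frac{J_{v}(s|x|)}{(s|x|)^{n/2-1}},
\]
with $v=n/2+k-1$, which is exactly the order appearing in the definition of $\tilde S_t$.

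Substituting back and collecting powers, the factor $s^{n/2-1/2}/(s|x|)^{n/2-1}$ produces $s^{1/2}|x|^{1-n/2}$, so the radial integral takes the form
\[
\int_0^\infty J_v(s|x|)\,s^{1/2}f_1(s)\,\e^{its^a}\,ds=|x|^{-1/2}\,\tilde S_t f_1(|x|),
\]
by \eqref{eq:defStilde}. Gathering all prefactors yields
\[
S_t^{(n)}f_P(x)=\alpha_n^{-1}i^{k}P(x')\,|x|^{1/2-n/2}\,\tilde S_t f_1(|x|),
\]
and rewriting $i^k P(x')=(-i)^k P(-x')$ via parity gives the stated formula with $c_{k,n}=(-i)^k$, so $|c_{k,n}|=1$.

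The only delicate point is bookkeeping the phase $(\pm i)^k$ in the Funk--Hecke identity, which depends on the sign of the exponential. I would either invoke Stein \cite{Ste93} directly or verify the $k=0$ case using the classical Bessel-function formula for the spherical mean of $\e^{is\xi'\cdot x}$ and extend to general $k$ by the fact that the Funk--Hecke operator acts as a scalar on spherical harmonics of degree $k$.
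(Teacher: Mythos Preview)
Your argument is correct and follows essentially the same route as the paper: both reduce the computation to the Bochner/Funk--Hecke identity for Fourier transforms of functions of the form (radial)$\times$(spherical harmonic), the paper by citing Stein--Weiss \cite{Ste-Wei}, p.~158, and you by writing out the Funk--Hecke integral explicitly. Your version has the minor bonus of identifying the unimodular constant concretely as $c_{k,n}=(-i)^k$, which the paper leaves unspecified.
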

We get the corollaries.
\begin{corollary}
$\tilde S_t$ has norm $1$ on $L^2(\R^n)$.
\end{corollary}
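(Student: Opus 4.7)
The plan is to deduce the corollary directly from Proposition 2 by comparing $L^2$ norms on both sides of the identity
\[
S_t^{(n)}f_P(x)=c_{k,n}\alpha_n^{-1}|x|^{1/2-n/2}\tilde S_tf_1(|x|)P(-x'),
\]
and exploiting the fact that $S_t^{(n)}$ is an isometry on $L^2(\R^n)$ (since its symbol $e^{it|\xi|^a}$ is unimodular, Plancherel gives $\|S_t^{(n)}g\|_2=\|g\|_2$ for every $g\in L^2(\R^n)$). Note that the statement must refer to $L^2(\R_+)$, the natural domain of $\tilde S_t$ as defined in \eqref{eq:defStilde}.

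First, I would fix a $v=n/2+k-1\ge -1/2$ and a solid spherical harmonic $P$ of degree $k$ on $\R^n$ with $\|P\|_{L^2(\S^{n-1})}=1$; any half-integer $v>-1$ is realized this way by a suitable choice of $n$ and $k$. Given $f_1\in L^2(\R_+)$, define $f_P$ by $\hat f_P(\xi)=P(\xi')f_1(|\xi|)|\xi|^{1/2-n/2}$. Passing to polar coordinates,
\[
\|\hat f_P\|_{L^2(\R^n)}^2=\int_0^\infty |f_1(r)|^2 r^{1-n}r^{n-1}\,dr\int_{\S^{n-1}}|P(\xi')|^2\,d\sigma(\xi')=\|f_1\|_{L^2(\R_+)}^2,
\]
so by Plancherel $\|f_P\|_{L^2(\R^n)}=\alpha_n^{-1}\|f_1\|_{L^2(\R_+)}$. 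In particular $f_P\in L^2(\R^n)$.

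Next, I would compute the other norm using Proposition 2. Writing $x=r x'$ with $r=|x|$ and $x'\in\S^{n-1}$,
\[
\|S_t^{(n)}f_P\|_{L^2(\R^n)}^2=|c_{k,n}|^2\alpha_n^{-2}\int_0^\infty |\tilde S_tf_1(r)|^2 r^{1-n}r^{n-1}\,dr\int_{\S^{n-1}}|P(-x')|^2\,d\sigma(x')
=\alpha_n^{-2}\|\tilde S_tf_1\|_{L^2(\R_+)}^2,
\]
using $|c_{k,n}|=1$ and the normalization of $P$ (the antipodal symmetry leaves $\|P\|_{L^2(\S^{n-1})}$ unchanged since $P$ is a homogeneous polynomial).

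Finally, since $S_t^{(n)}$ is an $L^2(\R^n)$-isometry, $\|S_t^{(n)}f_P\|_{L^2(\R^n)}=\|f_P\|_{L^2(\R^n)}$, so combining the two displays gives
\[
\alpha_n^{-1}\|\tilde S_tf_1\|_{L^2(\R_+)}=\alpha_n^{-1}\|f_1\|_{L^2(\R_+)},
\]
which is the desired equality $\|\tilde S_tf_1\|_{L^2(\R_+)}=\|f_1\|_{L^2(\R_+)}$. There is no real obstacle here: the only mild point is ensuring that the identity of Proposition 2, which is stated pointwise for nice $f_1$, transfers to all of $L^2(\R_+)$, which follows by the density of, say, Schwartz functions supported in $(0,\infty)$ together with the fact that both sides of the equality are bounded operations; the computation then extends by continuity.
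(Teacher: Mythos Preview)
Your proof is correct and follows essentially the same route as the paper: use Proposition 2 to identify $\alpha_n\|S_t^{(n)}f_P\|_{L^2(\R^n)}=\|\tilde S_tf_1\|_{L^2(\R_+)}$, then invoke Plancherel (the multiplier $e^{it|\xi|^a}$ is unimodular) and the polar-coordinate computation $\|\hat f_P\|_{L^2(\R^n)}=\|f_1\|_{L^2(\R_+)}$ to conclude. The paper's proof is simply a one-line chain of equalities encoding exactly these steps, and your observation that the statement should read $L^2(\R_+)$ rather than $L^2(\R^n)$ is correct.
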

\begin{corollary}We have\[
S^{*(n)}_E f_P(x)r=\alpha_n^{-1}r^{1/2-n/2}\tilde S^*_Ef_1(r)|P(-x')|\mbox{ for } x=rx'\in\R^n
\]
and \[
\alpha_n\|S^{*(n)}_E f_P\|_{L^2(\R^n)}=\|\tilde S^*_E f_1\|_{L^2(\R_+)}.
\]
\end{corollary}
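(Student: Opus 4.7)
The plan is to deduce the corollary as an immediate consequence of Proposition 2. Starting from the pointwise identity
\[
S_t^{(n)} f_P(x) = c_{k,n}\,\alpha_n^{-1}\,|x|^{1/2-n/2}\,\tilde S_t f_1(|x|)\,P(-x'),
\]
with $|c_{k,n}|=1$, I observe that only the factor $\tilde S_t f_1(|x|)$ depends on $t$, while $c_{k,n}$, $|x|^{1/2-n/2}$ and $P(-x')$ are constant in the $t$-variable. Taking absolute values therefore yields $|S_t^{(n)} f_P(x)| = \alpha_n^{-1}|x|^{1/2-n/2}|\tilde S_t f_1(|x|)|\,|P(-x')|$, and passing to the supremum over $t\in E$ commutes with the $t$-independent factors to give
\[
S^{*(n)}_E f_P(x) = \alpha_n^{-1}\,|x|^{1/2-n/2}\,\tilde S^*_E f_1(|x|)\,|P(-x')|,
\]
which is the first asserted identity (with $r=|x|$).

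For the $L^2$ equality, I would square and integrate in polar coordinates $x=rx'$, $dx=r^{n-1}\,dr\,d\sigma(x')$. The spherical and radial variables separate:
\[
\|S^{*(n)}_E f_P\|^2_{L^2(\R^n)} = \alpha_n^{-2}\int_0^\infty |\tilde S^*_E f_1(r)|^2\,r^{1-n}\cdot r^{n-1}\,dr \cdot \int_{\S^{n-1}}|P(-x')|^2\,d\sigma(x').
\]
The powers of $r$ cancel exactly, so the radial integral collapses to $\|\tilde S^*_E f_1\|^2_{L^2(\R_+)}$. Since $P$ is a solid spherical harmonic of degree $k$, it is homogeneous of degree $k$, whence $P(-x')=(-1)^k P(x')$ and $|P(-x')|^2=|P(x')|^2$; the spherical integral is therefore $\|P\|^2_{L^2(\S^{n-1})}=1$ by the normalization of $P$. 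Taking square roots yields $\alpha_n\|S^{*(n)}_E f_P\|_{L^2(\R^n)} = \|\tilde S^*_E f_1\|_{L^2(\R_+)}$.

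There is essentially no obstacle beyond Proposition 2 itself. The substantive content is the pointwise factorization supplied by Proposition 2; the corollary merely records that this factorization respects both the $t$-supremum (because every non-Bessel factor is $t$-independent and $|c_{k,n}|=1$) and the polar decomposition of $L^2(\R^n)$ (because the radial weights cancel and $P$ is unit-normalized on the sphere). The only micro-checks are $|P(-x')|=|P(x')|$ and $r^{1-n}\cdot r^{n-1}=1$, both immediate.
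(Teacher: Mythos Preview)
Your proof is correct and follows essentially the same approach as the paper: the paper's own proof simply says the first part follows by pulling the $t$-independent factor $\alpha_n^{-1}r^{1/2-n/2}|P(-x')|$ out of the supremum, and the second part by integrating over $\R^n$. Your write-up is just a more detailed execution of exactly this, including the helpful explicit checks that the radial weights cancel and that $|P(-x')|=|P(x')|$ via homogeneity.
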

\begin{proof}[Proof of Corollary 2]
We have \[
\|\tilde S_t f_1\|_{L^2(\R_+)}=\alpha_n\|S_t^{(n)}f_P\|_{L^2(\R^n)}=\|\e^{it|\xi|^a}\hat f_P\|_{L^2(\R^n)}
=\|\hat f_P\|_{L^2(\R^n)}=\| f_1\|_{L^2(\R_+)}.
\]
\end{proof}
\begin{proof}[Proof of Corollary 3] First part follows directly, as we may take out the factor $\alpha_n^{-1}r^{1/2-n/2}|P(-x')|$,
from the the supremum on the left hand side. 
Second part is obtained by integration over $\R^n$.
\end{proof}
We will use  the following estimate for the operator $\tilde S_t$
\begin{proposition}
Let $\tilde S_t$ be defined as in \eqref{eq:defStilde} with $v=n/2+k -1$ the and let $f$ be a function in $L^2(R)$
Let $f_1$ be a function in $L^2(\R+)$ and  let $f$ be the function in $L^2(\R)$ with
with Fourier transform \[
\hat f(\xi)=\left\{\begin{array}{l}\gamma(v)f_1(\xi)\mbox{ for }\xi>0,\\
						\overline{\gamma}(v)f_1(-\xi)\mbox{ for }\xi<0.\end{array}\right.						
\]
Then \[ 
\tilde S_t f_1(x)= \alpha_1 S^{(1)}_tf(x)+ R_{t,v}f_1(x)
\]
where the remainder terms satisfies \[
\left(\int_0^\infty \sup\limits_{t\in E} |R_{t,v}f_1(x)|^2\, dx\right)^{1/2}\le C_v \|f_1\|_{L^2(\R_+)}.
\]
\end{proposition}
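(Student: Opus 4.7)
The plan is to expand the Bessel kernel $J_v(rs)(rs)^{1/2}$ in the definition of $\tilde S_t$ via its leading large-argument asymptotic, recognize that the leading term exactly reproduces $\alpha_1 S_t^{(1)}f(r)$ under the symmetry assumption on $\hat f$, and estimate the resulting error kernel on $L^2(\R_+)$ by Schur's test.

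First I would invoke the standard Hankel asymptotic
\[
J_v(z)=\sqrt{\tfrac{2}{\pi z}}\cos\bigl(z-\tfrac{\pi v}{2}-\tfrac{\pi}{4}\bigr)+O(z^{-3/2}) \quad \text{as } z\to\infty,
\]
(the same expansion already used in the proof of Lemma 2) and rewrite the cosine as $\tfrac12(\gamma(v)\e^{iz}+\overline{\gamma(v)}\e^{-iz})$. This yields
\[
J_v(rs)(rs)^{1/2}=\tfrac{1}{\sqrt{2\pi}}\bigl(\gamma(v)\e^{irs}+\overline{\gamma(v)}\e^{-irs}\bigr)+K_{\mathrm{err}}(r,s),
\]
with $|K_{\mathrm{err}}(r,s)|\lesssim(rs)^{-1}$ for $rs\ge 1$ (from the next-order term in the asymptotic) and $|K_{\mathrm{err}}(r,s)|\lesssim 1$ for $rs\le 1$. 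The small-argument bound follows from $|J_v(z)z^{1/2}|\lesssim z^{v+1/2}\le 1$ near the origin, which holds since $v=n/2+k-1\ge -1/2$, together with boundedness of the cosine.

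Second is the identification step. Splitting $\alpha_1 S_t^{(1)}f(r)=\tfrac{1}{\sqrt{2\pi}}\int_\R\e^{ir\xi}\e^{it|\xi|^a}\hat f(\xi)\,d\xi$ into the $\xi>0$ and $\xi<0$ halves, inserting $\hat f(\xi)=\gamma(v)f_1(\xi)$ on the positive half and $\hat f(\xi)=\overline{\gamma(v)}f_1(-\xi)$ on the negative half, and changing variables $\eta=-\xi$ in the latter, I obtain
\[
\alpha_1 S_t^{(1)}f(r)=\tfrac{1}{\sqrt{2\pi}}\int_0^\infty\bigl(\gamma(v)\e^{irs}+\overline{\gamma(v)}\e^{-irs}\bigr)f_1(s)\e^{its^a}\,ds.
\]
Subtracting from $\tilde S_tf_1(r)$, one is left with $R_{t,v}f_1(r)=\int_0^\infty K_{\mathrm{err}}(r,s)f_1(s)\e^{its^a}\,ds$.

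Finally, since $|\e^{its^a}|=1$, the trivial pointwise bound
\[
\sup_{t\in E}|R_{t,v}f_1(r)|\le\int_0^\infty|K_{\mathrm{err}}(r,s)|\,|f_1(s)|\,ds
\]
reduces the claim to $L^2(\R_+)$-boundedness of the integral operator with kernel $\min(1,(rs)^{-1})$. Applying Schur's test with the weight $h(r)=r^{-1/2}$,
\[
\int_0^\infty\min\bigl(1,(rs)^{-1}\bigr)s^{-1/2}\,ds=\int_0^{1/r}s^{-1/2}\,ds+r^{-1}\int_{1/r}^\infty s^{-3/2}\,ds\lesssim r^{-1/2},
\]
and by the $r\leftrightarrow s$ symmetry of the kernel the symmetric bound holds, giving the desired $L^2(\R_+)$ estimate with a constant depending only on $v$. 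The main obstacle is the borderline $(rs)^{-1}$ decay at infinity: the kernel is not absolutely integrable along either axis alone, and the weight $h(r)=r^{-1/2}$ is essentially forced by the scaling $(r,s)\mapsto(\lambda r,s/\lambda)$ that leaves $(rs)^{-1}$ invariant. All further terms in the Bessel expansion decay faster and are handled trivially.
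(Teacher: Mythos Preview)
Your proof is correct and follows essentially the same route as the paper: expand $(rs)^{1/2}J_v(rs)$ via the leading asymptotic, identify the main term with $\alpha_1 S_t^{(1)}f$, and bound the remainder kernel $K_v(rs)$ by $C_v(1+rs)^{-1}$. The only cosmetic difference is that the paper isolates the $L^2(\R_+)$ bound for the error operator as a separate lemma (Lemma~8, proved via a change of variables and Minkowski's inequality) rather than invoking Schur's test; since for a kernel of the form $K(rs)$ the Schur condition with weight $r^{-1/2}$ is exactly $\int_0^\infty K(u)u^{-1/2}\,du<\infty$, the two arguments coincide.
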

It remains to prove the propositions in the section. Proposition 1 follows directly from Proposition 2 with Corollary 3  and Proposition 3 
\begin{proof}[Proof of Proposition 1]
Let $f_1(\xi)=\overline{\gamma(v)}\hat f(\xi)$ for $\xi>0$.
We have \[
\alpha^n\|S^{*(n)}_E f_P\|_{L^2(\R^n)}=\|\tilde S^*_E f_1\|_{L^2(\R_+)}\le\alpha_1\|S^{*(1)}_E f\|_{L^2(\R_+)}+\||R_{t,v}f_1\|_{L^2(\R_+)}
\]
Set $R(f,v)=\int_0^\infty \sup\limits_{t\in E} |R_{t,v}f_1(x)|^2\, dx$ Then
\[ 
R(f,v)=
\left(\int_0^\infty \sup\limits_{t\in E} |R_{t,v}f_1(x)|^2\, dx\right)^{1/2}\le C_v \|f_1\|_{L^2(\R_+)}=\frac{\alpha_1}{\sqrt(2)}C_v\|f\|_{L^2(\R)}.
\]
and we obtain the desired estimate.
This completes the proof of Proposition 1.
\end{proof}
\begin{proof}[Proof of Proposition 2]
Assume $f=f_0P$ as in the above theorem. It follows from Stein and Weiss \cite{Ste-Wei}, p. 158  that (when  
$\hat f(x)=F_0(x)P(x)$ where
\begin{align}
\label{eq:SteWei}
F_0(x)=c_{n,k}\alpha_n r^{1-n/2-k}\int_0^\infty f_0(s)J_{n/2+k-1}(rs)s^{n/2+k}\,ds
\end{align}
for $r=|x|$. Here $J_m$ denotes Bessel functions of order $m$. and $c_{n,k}$ is a constant with
 $|c_{n,k}|=1$. 
 (The one-dimensional cases follow elementary as \\
 $
J_{-1/2}(r))=\sqrt{2}\cos(r)/\sqrt{\pi r}$ and $ J_{1/2}(r))=\sqrt{2}\sin(r)/\sqrt{\pi r}.\hspace{.2cm})
$\\
Also\[
S_tf(x)=\alpha_n^{-1} c_{n,k}r^{1-n/2-k}\left(\int_0^\infty J_{n/2+k-1}(rs)F_0(s)\e^{its^a}s^{n/2+k}\,ds\right)P(-x),
\] 
where $r=|x|>0$. We set \[
f_1(|\xi|)=F_0(|\xi|)|\xi|^{-1/2+n/2+k},
\]
then $\|f_1\|_{L^2(\R+)}=\|\hat f_p\|_{L^2(\R^n)} $.\\ 
We obtain
\begin{align}
\label{eq:tilde}
S_tf_P(x)&=\alpha_n^{-1}c_{n,k}r^{1/2-n/2}\left(\int_0^\infty J_{n/2+k-1}(rs)f_1(s)(rs)^{1/2}\e^{its^a}\,ds\right)P(-x/r|)\\
&=\alpha_n^{-1}c_{n,k}r^{1/2-n/2}\tilde S_tf_1(r)P(-x/r),
\end{align}  
where the operator $\tilde S_t$ is defined by \eqref{eq:defStilde}.\\
This completes the proof of Proposition 2.
\end{proof}
\par
For proof of Proposition 3 we need the following
\begin{lemma}
Let $K(r)$ be a non-negative function on $(0,\infty)$ satisfying \[
\int_0^\infty\frac {K(r)}{\sqrt{r}}\,dr=A<\infty,
\]
and let \[
Tf(s)=\int_0^\infty K(rs)f(r)\,dr
\]
for $f\in L^2(0,\infty)$.\\
Then we have \[
\|Tf\|_{L^2(\R_+)}\le A\|f\|_{L^2(\R_+)},
\]
\end{lemma}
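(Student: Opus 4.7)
The plan is to apply the classical Schur test to the integral operator $T$ with the weight $w(r)=r^{-1/2}$. The multiplicative structure of the kernel $K(rs)$ pairs naturally with this weight, because the substitution $u=rs$ converts the hypothesis $\int_0^\infty K(r)r^{-1/2}\,dr = A$ directly into the two Schur integrability conditions.

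Concretely, I would first verify the symmetric Schur bounds. By the change of variable $u=rs$ (with $dr=du/s$ when $s$ is fixed, and $ds=du/r$ when $r$ is fixed), one obtains
\[
\int_0^\infty K(rs)\,r^{-1/2}\,dr \;=\; A\,s^{-1/2} \qquad\text{and}\qquad \int_0^\infty K(rs)\,s^{-1/2}\,ds \;=\; A\,r^{-1/2},
\]
so the weight $w(r)=r^{-1/2}$ works with constant $A$ in both slots.

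Given these identities, I would then run a one-line Cauchy--Schwarz argument directly, rather than quoting Schur as a black box: write
\[
K(rs)\,f(r) \;=\; \bigl[K(rs)^{1/2}\,r^{-1/4}\bigr]\cdot\bigl[K(rs)^{1/2}\,r^{1/4}\,f(r)\bigr],
\]
apply Cauchy--Schwarz inside the integral defining $Tf(s)$, bound the first factor by $A\,s^{-1/2}$ using the first identity above, then integrate $|Tf(s)|^2$ in $s$ and apply Fubini, using the second identity to collapse the $s$-integral. The result is $\|Tf\|_2^2\le A^2\|f\|_2^2$.

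There is no real obstacle here — the only thing to watch is the bookkeeping of the two substitutions and that both $r$ and $s$ sit in the weight symmetrically, so the balancing exponent $-1/4$ in the Cauchy--Schwarz split is forced. Since $K\ge 0$, positivity lets us ignore absolute values and makes Fubini immediate.
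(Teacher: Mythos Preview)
Your Schur-test argument is correct and delivers the sharp constant $A$. The paper, however, takes a different route: after the substitution $u=rs$ it rewrites
\[
Tf(s)=\int_0^\infty K(r)\,h_r(s)\,\frac{1}{\sqrt{r}}\,dr,
\qquad h_r(s)=\frac{1}{\sqrt{r}}\,h\!\left(\frac{s}{r}\right),\quad h(s)=\frac{1}{s}\,f\!\left(\frac{1}{s}\right),
\]
notes that $\|h_r\|_{L^2(\R_+)}=\|h\|_{L^2(\R_+)}=\|f\|_{L^2(\R_+)}$ (dilation and inversion are isometries of $L^2(\R_+)$), and then applies Minkowski's integral inequality in one step. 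Your approach is the more systematic and widely recognised one, and it works without spotting the isometric decomposition; the paper's Minkowski argument avoids the Cauchy--Schwarz splitting and the Fubini step, but requires the preliminary change of variables to expose that $Tf$ is an average of $L^2$-normalised copies of $f$. Both arguments are short, use only $K\ge 0$, and land on the same constant.
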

\begin{proof}[Proof of Proposition 3]
 We have the Bessel function\[
J_v(r)=\sqrt{\frac2{\pi r}} \cos(r-\pi v/2-\pi/4)+ \mathscr{O}(r^{-3/2})\mbox{ as } r\to\infty.
\]
See Stein and Weiss \cite{Ste-Wei}. p. 158. It follows that\\
\begin{align}
\label{eq:r_halfJ}
r^{1/2}J_v(r)=\gamma_v\e^{ir}+\overline{\gamma}_v\e^{-ir}+K_v(r)
\end{align}
for $r>0$, where 
 \[
\gamma_v=\frac1{\sqrt{2\pi}}\e^{-i(\pi v/2+\pi /4)}=\alpha_1\gamma(v)\frac1{2\pi}
\]
and
\[
 |K_v(r)|\le C_v \frac1{1+r}.
\]
We get 
\begin{align}
         \tilde S_tf_1(r)&=\gamma(v)\alpha_1\frac1{2\pi}\int_0^\infty \e^{irs}\e^{its^a} f_1(s)\,ds +\overline{\gamma}(v)\alpha_1\frac1{2\pi}\int_0^\infty \e^{-irs}\e^{its^a} f_1(s)\,ds\\
         &\hspace{1cm}+\int_0^\infty K_v(rs)\e^{its^a} f_1(s)\,ds\\
&=\alpha_1\frac1{2\pi}\int_0^\infty \e^{irs}\e^{it|s|^a}\gamma(v) f_1(s)\,ds +\alpha_1\frac1{2\pi}\int_{-\infty}^0 
         \e^{irs}\e^{it|s|^a}\overline{\gamma}(v)f_1(-s)\,ds+R_{t,v}f_1(r)\\ 
       &=\alpha_1\frac1{2\pi}\int_{-\infty}^\infty \e^{irs}\e^{it|s|^a}\hat f(s)\,ds           
        =\alpha_1S_t^{(1)} f(r)
        +R_{v,t}f_1(r)
         \end{align} 
for $r>0$.\\
Let $\tilde R_v$ be the sublinear operator on $L^2[0,\infty)$ defined by
\begin{align}
\label{eq:sublinearT}
\tilde R_vg(r)=\int_0^\infty | K_v(rs)|\,|g(s)|\,ds. 
\end{align}
Then\[
\sup\limits_{t\ge0}|R_{v,t}f_1(r)|\le\tilde R_vf_1(r).
\]
Since $\int_0^\infty |K_v(r)|r^{-1/2}\,dr\le C_v\int_0^\infty r^{-1/2}(1+r)^{-1/2}\,dr<\infty$ we obtain
by Lemma 8
\[
\left(\int_0^\infty \sup\limits_{t>0}|R_{v,t}\hat f_1(r)|^2\right)^{1/2}\le \left(\int_0^\infty (\tilde R_v\hat f_1(r))^2\,dr\right)^{1/2}\le C_v \|f_1\|_2.
\]
which completes the proof of the Proposition 3.
\end{proof}
\par
In this section it remains only to prove Lemma 8.\\
\begin{proof}[Proof of Lemma 8]
We have\[
Tf(s)=\int_0^\infty K(u)f(\frac us)\frac1s\,du, s>0,
\]
and we set \[
h(s)=f\left(\frac1s\right)\frac1s \hspace{.5cm}\mbox{ and }\hspace{.5cm} h_r(s)=h\left(\frac sr\right)\frac1{\sqrt{r}}\hspace{.7cm}\mbox{ for }r>0\mbox{ and }s>0.
\]
It then follows that\[
f\left(\frac rs\right)\frac1s=f\left(\frac1{s/r}\right)\frac1{s/r}\frac1r=h\left(\frac sr\right)\frac1r=h_r(s)\frac1{\sqrt{r}}.
\]
We have\[
Tf(s)=\int_0^\infty K(r)h_r(s)\frac1{\sqrt{r}}\,dr,
\]
and an application of Minkovski's inequality gives
\[
\|Tf\|_2\le\int_0^\infty K(r)\frac1{\sqrt{r}}\,\|h_r\|_2\,dr.
\]
We observe that \[
\|h_r\|_{L^2(\R_+)}=\|h\|_2=\|f\|_{L^2(\R_+)},
\]
and get\[
\|Tf\|_{L^2(\R_+)}\le\int_0^\infty K(r)\frac1{\sqrt{r}}\,dr\, \|f\|_{L^2(\R_+)}=A\|f\|_{L^2(\R_+)},
\] 
This completes the proof of Lemma 8.
\end{proof}
\section{A counter-example}
We shall give a counter-example in dimension $n\ge2$.
\begin{theorem}
Assume that $(t_k)_1^\infty $ is decreasing and  $(t_k-t_{k+1})_1^\infty $ is decreasing and $\lim\limits_{k\to\infty}t_k=0$. Assume 
$a>0, a\ne1$, and $0<s<a/4$, and $n\ge2$, Set $r(s)=2s/(a-4s)$ and assume that  $(t_k)_1^\infty\notin l^{r(s),\infty} $.\\
Then there is no estimate\[
\|\sup\limits_k|S_{t_k}f|\|_2\lesssim\|f\|_{H_s}
\]
for all radial function $f\in\mathscr{S}(\R^n)$.
\end{theorem}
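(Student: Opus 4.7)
My plan is to argue by contradiction, reducing the $n$-dimensional radial statement to a one-dimensional maximal inequality via the machinery of Section~4, and then invoking the sharpness half of the Dimou--Seeger theorem cited in the Introduction.

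Suppose $\|\sup_k|S_{t_k}f|\|_2 \lesssim \|f\|_{H_s(\R^n)}$ holds for every radial $f \in \mathscr{S}(\R^n)$. I take $P$ to be the normalised constant spherical harmonic, so $k=0$ and $v = n/2-1$, and set $E = \{0\}\cup\{t_k\}_{k\ge 1}$. For a Schwartz $g$ on $\R$ whose Fourier transform satisfies the twisting symmetry $\gamma(v)\hat g(-r) = \overline{\gamma}(v)\hat g(r)$ of Proposition~1, let $f_P$ be the radial lift defined by $\hat f_P(\xi) = c_n\hat g(|\xi|)|\xi|^{1/2-n/2}$. Proposition~1 then yields
\[
\alpha_1\left(\int_0^\infty |S_E^{*(1)}g(x)|^2\,dx\right)^{1/2} \le \alpha_n\|S_E^{*(n)}f_P\|_{L^2(\R^n)} + C_v\|g\|_{L^2(\R)}.
\]
A polar-coordinate calculation gives $\|f_P\|_{H_s(\R^n)}^2 \sim \int_0^\infty (1+r^2)^s|\hat g(r)|^2\,dr \lesssim \|g\|_{H_s(\R)}^2$, so the contradiction hypothesis forces a 1D maximal estimate $\|\sup_k|S_{t_k}^{(1)}g|\|_{L^2(\R)} \lesssim \|g\|_{H_s(\R)}$ for every $g$ in the stated symmetry class. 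Carrying out the same procedure in a second nearby dimension and combining via Theorem~8 (or, equivalently, an even/odd decomposition of $g$) then removes the symmetry restriction, producing this 1D estimate for every $g \in \mathscr{S}(\R)$.

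It then remains to contradict this one-dimensional estimate using the sharpness half of the Dimou--Seeger theorem in \cite{Dim-See}: their construction, applied to a sufficiently regular sequence whose counting function $\#\{k:t_k>b\}$ fails to be dominated by $b^{-r(s)}$, supplies a Schwartz $g \in \mathscr{S}(\R)$ violating the maximal inequality just derived. This is the desired contradiction.

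The main obstacle lies in this last step. The Dimou--Seeger sharpness is stated for $l^r$ membership, whereas our hypothesis is the weaker assumption $(t_k)\notin l^{r(s),\infty}$, and one must verify that their construction carries through under the weak-type condition. This is precisely where the structural hypotheses of Theorem~9 are decisive: monotonicity of $(t_k)$ together with monotonicity of the consecutive differences $(t_k-t_{k+1})$ forces the sequence to be regular enough that the counting function controls $t_k$ up to constants, so the failure of the weak-$l^{r(s)}$ bound translates into a dense collection of scales at which $t_k \sim k^{-1/r(s)}$ --- exactly the input consumed by the 1D counter-example. Verifying this regularity-to-counter-example implication, together with checking that the polar-coordinate Sobolev norm identification is lossless up to a bounded correction, are the only nontrivial routine calculations in the proof.
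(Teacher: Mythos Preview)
Your strategy is genuinely different from the paper's. The paper does not pass through Section~4 at all: it builds the radial counter-example in $\R^n$ directly, taking $\hat f_j(\xi)=\rho_j^{-1}g((|\xi|-\lambda_j)/\rho_j)$, expanding $\hat\sigma$ via the Bessel asymptotic, and showing that the main oscillatory piece $S^2_tf$ reproduces the Dimou--Seeger phase $\Phi_{\lambda,\rho}$ while the opposite-phase piece $S^1_tf$ is killed by one integration by parts (because both terms of $\Phi_1'$ have the same sign). Your route --- assume the radial estimate, transfer to a $1$D estimate via Proposition~1, then invoke Dimou--Seeger sharpness --- is conceptually cleaner and would avoid rewriting the stationary-phase analysis, \emph{if} it went through.

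The genuine gap is the symmetry-removal step. Proposition~1 (with $k=0$) only delivers the $1$D bound
\[
\Bigl(\int_0^\infty|S_E^{*(1)}g|^2\Bigr)^{1/2}\lesssim\|g\|_{H_s(\R)}
\]
for functions obeying $\gamma(v)\hat g(-r)=\overline{\gamma}(v)\hat g(r)$ with $v=n/2-1$. Your proposal to lift this restriction by ``a second nearby dimension and Theorem~8'' cannot work: the contradiction hypothesis is only for radial $f$ in the \emph{fixed} dimension $n$, so you have no estimate in $\R^{n_1}$ to feed into Theorem~8, and taking a second spherical harmonic of degree $k_1\ge1$ in the same dimension produces a non-radial $f_Q$. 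The even/odd alternative in Theorem~8 has the same defect.

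What you would actually need is that the $1$D Dimou--Seeger test function $h$ (Fourier-supported near $+\lambda$), once symmetrised to $g(x)=h(x)+\overline{\gamma}(v)^2\,h(-x)$, remains a counter-example on $\R_+$. That in turn requires showing $\sup_t|S_t^{(1)}h(-x)|$ is small on the interval $I_j$ --- and this is exactly the integration-by-parts estimate for $S^1_tf$ that the paper carries out. So your approach, once repaired, collapses into the paper's computation rather than bypassing it. (Incidentally, your worry about $l^r$ versus $l^{r,\infty}$ is not an obstacle: the Dimou--Seeger construction the paper cites already starts from $(t_k)\notin l^{r(s),\infty}$.)
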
 
\begin{proof}
In the case $n=1$ this theorem i proved in Dimou and Seeger \cite{Dim-See} (with the exception that their counter-example is not radial) and
we shall modify their proof. Assuming that  $(t_k)_1^\infty \notin l^{r(s),\infty}$ Dimou and Seeger first construct sequences $(b_j)_1^\infty$
and $(M_j)_1^\infty$ of positive numbers such that $\lim\limits_{j\to\infty}b_j=0$ and $\lim\limits_{j\to\infty}M_j=\infty$. Taking 
$\epsilon<10^{-1}(a+2)^{-1}$ they then set \[
 \lambda_j=M_j^{\frac2a}b_j^{-\frac1{a-4s}}\mbox{ and }\rho_j=\epsilon b_j^{-1/2}\lambda_j^{1-a/2} =\epsilon M_j^{\frac{2-a}a}b_j^{-\frac{1-2s}{a-4s}}
\]
for $j=1,2,3,\dots$. We shall consider these numbers for $j$ large and observe that $\rho_j/\lambda_j=$\\
$\epsilon M_j^{-1}b^{\frac{2s}{a-4s}}\le\epsilon$.\\
In \cite{} the function\[
\Phi_{\lambda,\rho}(\xi,x,t)=x(\rho\xi-\lambda)+t(\lambda-\rho\xi)^a,|\xi|\le1/2,x\in\R,t>0,
\]
is studied an it is proved that for $x\in I_j=[0,a\lambda_j^{a-1}b_j/2]$ there exists $t_{k(x,j)}$ such that 
\begin{align}
\label{eq:7}
\max\limits_{|\xi|\le1/2}\left|\e^{i\Phi_{\lambda_j,\xi_j} (\xi,x,t_{k(x,j)})}-1\right|\le1/2.
\end{align}
We shall use the inequality \eqref{eq:7} in our proof and shall also use that
\begin{align}
\label{eq:8}
\rho_j\lambda_j^{a-1}b_j\to\infty\mbox{ as }j\to\infty
\end{align}
To prove \eqref{eq:8} observe that
\begin{align}
\rho_j\lambda_j^{a-1}b_j&=\epsilon M_j^{\frac{2-a}a}b_j^{-\frac{1-2s}{a-4s}}M_j^{\frac{2(a-1)}a}b_j^{-\frac{a-1}{a-4s}}b_j\\
&=\epsilon M_j^{\frac{2-a+2a-2}a}b_j^{\frac{-1+2s-a+1+a-4s}{a-4s}}=\epsilon M_jb_j^{-\frac{2s}{a-4s}}
\end{align}
which implies \eqref{eq:8}.\\
Then set $J_j=[a\lambda_j^{a-1}b_j/4,a\lambda_j^{a-1}b_j/2]$ and let $C_1$ be a large constant. It follows from \eqref{eq:8} that 
$\lambda_j\lambda_j^{a-1}b_j\to\infty\mbox{ as }j\to\infty$ and hence\[
2C_1\le a\lambda_j\lambda_j^{a-1}b_j/4
\]
and
\[
\frac{2C_1}{\lambda_j}\le a\lambda_j^{a-1}b_j/4
\]
for large $j$. We conclude that 
\begin{align}
\label{eq:9}
|x|\in J_j\mbox{ implies }\lambda_j|x|\ge2C_1.
\end{align} 
Now let $\sigma$ denote the surface measure on the unit sphere in $\R^n$. We have
\begin{align}
\label{eq:10}
\hat\sigma(y)=c_1\frac{\e^{i|y|}}{|y|^{n/2-1/2}} +c_2\frac{\e^{-i|y|}}{|y|^{n/2-1/2}} +R(y),
\end{align}
where
\begin{align}
\label{eq:11}
|R(y)|\lesssim\frac1{|y|^{n/2+1/2}}\le\frac\delta{|y|^{n/2-1/2}}\mbox{ for }|y|\ge C_1
\end{align}
and $\delta$ is small. (See Stein \cite{Ste93} , p. 347).\\
Then assume that $g\in C_0^\infty(\R)$, $\supp g\subset[-1/2,1/2]$, $g\ge0$, $\int g\,dx=1$, and $g$ even. We define a function
 $f\in\mathscr{S}(\R^n)$ by setting\[
 \hat f(\xi)=\frac1\rho\, g\left(\frac{|\xi|-\lambda}\rho\right)\mbox{ for }\xi\in\R^n.
 \] 
 Here $\lambda=\lambda_j$ , $\rho=\rho_j$ and $f=f_j$.\\
 It is easy to see that $\hat f(\xi)\ne0$ implies $\lambda-\rho/2\le|\xi|\le\lambda+\rho/2$. We also have\[
 \int|\hat f(\xi)|^2\lambda^{2s}\, d\xi\lesssim\int\limits_{\lambda-\rho/2}^{\lambda+\rho/2}\rho^{-2}\lambda^{2s}r^{n-1}\, dr\lesssim\rho^{-1}\lambda^{2s+n-1}
 \]
 and
 \begin{align}
 \label{eq:12}
 \|f\|_{H_s}\lesssim\rho^{-1/2}\lambda^{s+n/2-1/2}
 \end{align}
 Using polar coordinates we have
 \begin{align}
 S_tf(x)&=c \int \e^{i\xi\cdot x} \e^{it|\xi|^a}\hat f(\xi)\, d\xi=c\int \e^{i\xi\cdot x} \e^{it|\xi|^a}\frac1\rho\, g\left(\frac{|\xi|-\lambda}\rho\right)\,d\xi\\
 &=c\int_0^\infty\frac1\rho\, g\left(\frac{r-\lambda}\rho\right)\e^{itr^a}\left(\,\,\int\limits_{S^{n-1}}\e^{ir\xi^\prime\cdot x}\,d\sigma(\xi^\prime)\right)r^{n-1}dr\\
 &=c\int_0^\infty\frac1\rho\, g\left(\frac{r-\lambda}\rho\right)\e^{itr^a}\hat\sigma(rx)r^{n-1}dr
 \end{align}
 where by \eqref{eq:10}\[
 \hat\sigma(rx)=c_1\frac{\e^{ir|x|}}{(r|x|)^{n/2-1/2}} +c_2\frac{\e^{-ir|x|}}{(r|x|)^{n/2-1/2}} +R(rx).
 \]
 We assume $|x|\in J_j$ and \eqref{eq:9} gives $\lambda|x|\ge2C_1$ and $r|x|\ge C_1$ in the above integral. Hence by \eqref{eq:11} \[
 |R(rx)|\le\delta(r|x|)^{-n/2+1/2}.
 \] 
 It follows that
\begin{align}
S_tf(x)&=c_1c\int_0^\infty\frac1\rho\, g\left(\frac{r-\lambda}\rho\right)\e^{itr^a}\frac{\e^{ir|x|}}{(r|x|)^{n/2-1/2}}r^{n-1}\,dr\\
&+c_2c\int_0^\infty\frac1\rho\, g\left(\frac{r-\lambda}\rho\right)\e^{itr^a}\frac{\e^{-ir|x|}}{(r|x|)^{n/2-1/2}}r^{n-1}\,dr+c\int_0^\infty\frac1\rho\, g\left(\frac{r-\lambda}\rho\right)\e^{itr^a}R(rx)r^{n-1}\,dr
 \end{align}
 Setting $\xi=(r-\lambda)/\rho$ so that $r=\lambda+\rho\xi$ we obtain\[
 S_tf(x)=c_1S^1_tf(x)+c_2S^2_tf(x)+S^3_tf(x)
 \]
 where\[
 S^1_tf(x)=\left(c\int g(\xi)\e^{i[t(\lambda+\rho\xi)^a+|x|(\lambda+\rho\xi)]}(\lambda+\rho\xi)^{n/2-1/2}\,d\xi\right)\, |x|^{1/2-n/2},    
 \]
 \[
 S^2_tf(x)=\left(c\int g(\xi)\e^{i[t(\lambda+\rho\xi)^a-|x|(\lambda+\rho\xi)]}(\lambda+\rho\xi)^{n/2-1/2}\,d\xi\right)\, |x|^{1/2-n/2},
 \]
 and
 \[
 |S^3_tf(x)|\lesssim\delta\int(\lambda+\rho\xi)^{n/2-1/2}g(\xi)\,d\xi\,|x|^{1/2-n/2}\le C\delta\lambda^{n/2-1/2}|x|^{1/2-n/2}
 \]
 In $S^1_tf(x)$ and $S^2_tf(x)$ we can replace $\xi$ by $-\xi$. 
   We use that $g$ is even  and get the phase functions\[
  \Phi_1(\xi)=|x|(\lambda-\rho\xi)]+t(\lambda-\rho\xi)^a
  \]
  and
  \[
  \Phi_2(\xi)=-|x|(\lambda-\rho\xi)]+t(\lambda-\rho\xi)^a
  \]
  and  replace $(\lambda+\rho\xi)^{n/2+1/2}$ by $(\lambda-\rho\xi)^{n/2-1/2}$ .\\
We have \[
  \Phi_2(\xi)=|x|(\rho\xi-\lambda)]+t(\lambda-\rho\xi)^a=\Phi_{\lambda,\rho}(\xi,|x|,t)=\Phi(\xi)
  \]
  and we also have\[
  S^2_tf(x)=\left(\int\e^{i\Phi(\xi)}\Lambda(\xi)g(\xi)d\xi\right)\,|x|^{1/2-n/2}
  \]
  where $\Lambda(\xi)=c(\lambda-\rho\xi)^{n/2-1/2}$.
  Choosing $t=t_{k(|x|),j}$ we obtain
  \begin{align}
  |x|^{n/2-1/2}|S^2_{t_{k(|x|),j)}}f(x)|\ge\int g\,\Lambda\, d\xi-\int |\e^{i\Phi}-1|\,g\,\Lambda\, d\xi\\
  \ge\int g\,\Lambda\, d\xi-\max\limits_{|\xi|\le1/2}|\e^{i\Phi(\xi)}-1|\int g\,\Lambda\, d\xi\ge\frac12\int g\,\Lambda\, d\xi\
 \end{align}
 for $|x|\in J_j$ since $\max\limits_{|\xi|\le1/2}|\e^{i\Phi(\xi)}-1|\le\frac12$ according to inequality \eqref{eq:7}.\\
 It follows that\[
 \sup\limits_k|S^2_{t_k}f(x)|\ge\frac12\int g\,\Lambda\, d\xi\,|x|^{1/2-n/2}\ge c \lambda^{n/2-1/2}|x|^{1/2-n/2}
 \]
 for $|x|\in J_j$.
 It remains to study $S_t^1f(x)$. We set $h=g\Lambda$ an then have $h\lesssim\lambda^{n/2-1/2}$ 
 and $|h^\prime|\lesssim(\lambda+\rho)\lambda^{n/2-3/2}\le2\lambda^{n/2-1/2}$. Integrating by parts
 we obtain
 \begin{align}
   |x|^{n/2-1/2}S^1_tf(x)&=\int\e^{i\Phi_1}h\,d\xi=\int\e^{i\Phi_1}i\Phi_1^\prime\frac1{i\Phi_1^\prime}h\,d\xi\\
   &=-\int\e^{i\Phi_1}\frac1i\left(\frac1{\Phi_1^\prime}h^\prime-\frac{\Phi_1''}{(\Phi^\prime)^2} h\right)\,d\xi
 \end{align}
 We have \[
\Phi_1'= -|x|\rho-\rho at(\lambda-\rho\xi)^{a-1}
 \]  
 and
 \[
 \Phi_1''=t a(a-1)\rho^2(\lambda-\rho\xi)^{a-2}
 \]
 and it follows that $|\Phi_1'|\ge\rho[x] $ and  $|\Phi_1'|\ge\rho at(\lambda-\rho\xi)^{a-1} $.
 We have\[
 \frac1{|\Phi_1'|}\le\frac1{\rho|x|}
 \]
 and\[
 \frac{|\Phi_1''|}{|\Phi'|^2}= \frac{1}{|\Phi_1'|} \frac{|\Phi_1''|}{|\Phi_1'|}\le\frac1{\rho|x|}\frac{a|a-1|t\rho^2(\lambda-\rho\xi)^{a-2}}{at(\lambda-\rho\xi)^{a-1} }\lesssim\frac1{\rho|x|}\frac\rho{\lambda-\rho\xi}\lesssim \frac1{\rho|x|}. 
 \] 
 It follows that
 \begin{align}
 |x|^{n/2-1/2}|S^1_tf(x)|\lesssim\frac1{\rho|x|}\int(|h|+|h'|)\,d\xi \lesssim\frac1{\rho|x|}\lambda^{n/2-1/2}.
 \end{align}
 and if $|x|\in J_j$ we get\[
 |S_t^1f(x)|\lesssim|x|^{1/2-n/2}{\frac1{\rho_j\lambda^{a-1}_jb_j }} \lambda^{n/2-1/2}\lesssim\delta\lambda^{n/2-1/2}|x|^{1/2-n/2},
 \]
 where we have used \eqref{eq:8}.
 Hence we have\[
 S^*f(x)=\sup\limits_k|S_{t_k}f(x)|\ge c\lambda^{n/2-1/2}|x|^{1/2-n/2}
 \]
 for $|x|\in J_j$.\\
  The theorem will follow if we show that with $f=f_j$ we have\[
 \frac{\|S^*f_j\|_2}{\|f_j\|_{H_s}}\to\infty \mbox{ as }j\to\infty.
 \]
 We have\[
 \int\limits_{R^n}|S^*f(x)|^2\,dx\ge\int\limits_{|x|\in J_j}|S^*f(x)|^2|\,dx \gtrsim\int\limits_{|x|\in J_j}\lambda^{n-1}|x|^{1-n}\,dx \ge |I_j|\lambda^{n-1} 
 \]
 and\[
 \|f\|^2_{H_s}\lesssim
 \rho^{-1}\lambda^{2s+n-1}.
 \]
 With $f=f_j$ we get \[
 \left(\frac{\|S^*f_j\|_2}{\|f_j\|_{H_s}}\right)^2\gtrsim\frac{\lambda^{n-1}|I_j|}{\rho_j^{-1}\lambda_j^{2s+n-1} }=\rho_j\lambda_j^{-2s}|I_j|. 
\]
We have $|I_j|=a\lambda_j^{a-1}b_j/2$ and obtain
\begin{align}
 \left(\frac{\|S^*f_j\|_2}{\|f_j\|_{H_s}}\right)^2&\gtrsim\rho_j\lambda_j^{-2s} \lambda_j^{a-1}b_j=\rho_j\lambda_j^{a-1-2s}b_j\\
&=\epsilon M_j^{\frac{2-a}a}b_j^{-\frac{1-2s}{a-4s}}\left(M_j^{\frac2a}b_j^{-\frac1{a-4s}}\right)^{a-1-2s}b_j\\
&=\epsilon M_j^{\frac{a-4s}{a}}b_j^{\frac{-1+2s-a+1+2s+a-4s}{a-4}}=\epsilon M_j^{\frac{a-4s}{a}}.
\end{align}
Since $a-4s>0$ and $M_j\to\infty$ as $j\to\infty$ we conclude that\[
\frac{\|S^*f_j\|_2}{\|f_j\|_{H_s}}\to\infty \mbox{ as }j\to\infty,
\]
This completes the proof of theorem .
\end{proof}
\par
Now let $a>0,a\ne1, 0<s<a/4$ and $r=2s/(a-4s)$. Also let $(t_m)_1^\infty$ satisfy \eqref{eq:1} and 
let $(t_m-t_{m+1})_1^\infty$ be decreasing.  \\
It is proved in Dimou and Seeger \cite{Dim-See} that in the case $n=1$ one has\[
\|\sup\limits|S_{t_m}f|\|_2\lesssim\|f\|_{H_s}, \,f\in\mathscr{S}(\R)
\]
if $(t_m)_1^\infty\in l^{r,\infty}$.\\
It then follows from Theorems 5 and 9 that in the case $n\ge2$ one has\[
\|\sup\limits_m|S_{t_m}f|\|_2\lesssim\|f\|_{H_s} 
\]
for all radial functions $f$ in $\mathscr{S}(\R^n)$, if and only if 
  $(t_m)_1^\infty\in l^{r,\infty}$.
\par

\Addresses

\end{document}